\let\citep=\cite
\definecolor{labelkey}{rgb}{0.0, 0.8, 0.3}
\numberwithin{equation}{section}
\crefname{equation}{}{}
\Crefname{equation}{}{}
\declaretheorem[name=Lemma]{lemma}
\declaretheorem[name=Proposition]{proposition}
\declaretheorem[name=Remark, style=remark]{remark}
\declaretheorem[name=Theorem]{theorem}
\newtheorem{assumption}{Assumption}
\newtheorem{corollary}[theorem]{Corollary}
\newcommand\simiid{\stackrel{iid}{\sim}}
\newcommand\TV{\sf{TV}}
\newcommand\one{\mathbbm{1}}
\newcommand\E{\mathbb{E}}
\newcommand\eqdef{\triangleq}
\newcommand{\bb}{\mathbb}
\newcommand\subsetsim{\mathrel{%
  \ooalign{\raise0.2ex\hbox{$\subset$}\cr\hidewidth\raise-0.8ex\hbox{\scalebox{0.9}{$\sim$}}\hidewidth\cr}}}
\renewcommand{\P}{\mathbb{P}}
\newcommand{\w}{\omega}  
\newcommand{\sgn}{\operatorname{sign}}
\renewcommand{\one}{\bbm{1}}
\renewcommand{\sf}{\mathsf}
\renewcommand{\P}{\mathbb P}
\newcommand{\sref}[2]{\hyperref[#2]{#1 \ref*{#2}}}
\renewcommand{\cal}{\mathcal}
\renewcommand{\Im}{\mathfrak{I}}
\newcommand{\mc}{\mathcal}
\newcommand{\D}{\mathrm{d}}
\renewcommand{\d}{\mathrm{d}}
\newcommand{\C}{\mathbb{C}}
\newcommand{\N}{\mathbb{N}}
\newcommand{\Q}{\mathbb{Q}}
\newcommand{\Z}{\mathbb{Z}}
\DeclareMathOperator\vol{vol}
\DeclareMathOperator*{\argmin}{arg\,min}
\DeclareMathOperator{\sign}{sign}
\DeclareMathOperator\supp{supp}
\renewcommand{\d}{\mathrm{d}}
\newcommand\bbm[1]{\mathbbm{#1}}
\newcommand\mbb[1]{\mathbb{#1}}
\newcommand{\R}{\bb{R}}
\renewcommand{\E}{\bb E}
\newcommand{\eps}{\varepsilon}
\renewcommand{\O}{\mathcal{O}}
\renewcommand{\sf}{\mathsf}
\renewcommand{\P}{\mathbb P}
\newcommand{\EE}{\mathbb E}
\newcommand{\mmd}{\operatorname{MMD}}
\renewcommand{\cal}{\mathcal}
\def\braket#1{\langle#1\rangle}
\begin{document}

\begin{frontmatter}

	\title{Density estimation using\\the perceptron}
	\runtitle{Density estimation using the perceptron}
	
\author{Patrik Róbert Gerber \hfill prgerber@mit.edu \\ 
	Tianze Jiang \hfill  tjiang@mit.edu \\ 
        Yury Polyanskiy \hfill yp@mit.edu \\
        Rui Sun \hfill  eruisun@mit.edu \\}

 %    \address{{Department of Mathematics} \\
	% 	{Massachusetts Institute of Technology}\\
	% 	{77 Massachusetts Avenue,}\\
	% 	{Cambridge, MA 02139, USA}
 %    printead{e1} }
 %    \address{{Department of Mathematics} \\
	% 	{Massachusetts Institute of Technology}\\
	% 	{77 Massachusetts Avenue,}\\
	% 	{Cambridge, MA 02139, USA}
 %    printead{e2} }
 %    \address{{Department of Mathematics} \\
	% 	{Massachusetts Institute of Technology}\\
	% 	{77 Massachusetts Avenue,}\\
	% 	{Cambridge, MA 02139, USA}
 %    printead{e3} }
 %    \address{
	% 	 {Department of Electrical Engineering and Computer Science} \\
	% 	{Massachusetts Institute of Technology}\\
	% 	{32 Vassar St,}\\
	% 	{Cambridge, MA 02142, USA}
 %    printead{e4} 
	% }
	
\runauthor{Gerber, Jiang, Polyanskiy and Sun}

\begin{abstract}
\small{We propose a new density estimation algorithm. Given
$n$ i.i.d. observations from a distribution belonging to a class
of densities on $\mathbb{R}^d$, our estimator outputs any density in the class whose ``perceptron
discrepancy'' with the empirical distribution is at most $O(\sqrt{d/n})$.
The perceptron discrepancy is defined as the largest
difference in mass two distribution place on any halfspace. It is shown that
this estimator achieves the expected total variation distance to the truth that is almost
minimax optimal over the class of densities with bounded Sobolev norm and Gaussian
mixtures. This suggests that the regularity of the prior distribution could be an
explanation for the efficiency of the ubiquitous step in machine learning that replaces optimization over large function spaces with simpler parametric
classes (such as discriminators of GANs).
We also show that replacing the perceptron discrepancy with
the generalized energy distance of \cite{szekely2013energy} further improves
total variation loss. The generalized energy distance between empirical
distributions is easily computable and
differentiable, which makes it especially useful for fitting generative models.
To the best of our knowledge, it is the first ``simple'' distance with such
properties that yields minimax optimal statistical guarantees. 

In addition, we shed light on the ubiquitous method of representing discrete data in domain $[k]$ via embedding vectors on a unit ball in $\mathbb{R}^d$. We show that taking $d \asymp \log(k)$ allows one to use simple linear probing to evaluate and estimate total variation distance, as well as recovering minimax optimal sample complexity for the class of discrete distributions on $[k]$.
\footnote{Supported in part by the NSF grant CCF-2131115 and the MIT-IBM Watson AI Lab.}

}
\end{abstract}

\end{frontmatter}

\newpage

\tableofcontents
\setcounter{tocdepth}{2}

\newpage
\section{Introduction}\label{sec:intro}

A standard step in many machine learning algorithms is to replace an (intractable) optimization
over a general function space with an optimization over a large parametric class (most often
neural networks). This is done in supervised learning for fitting classifiers, in variational
inference \citep{blei2017variational,zhang2018advances} for applying ELBO, in variational
autoencoders \citep{kingma2019introduction} for fitting the decoder, in Generative Adversarial
Networks (GANs) \citep{goodfellow2014generative,arjovsky2017wasserstein} for fitting the
discriminator, in  diffusion models \citep{song2020score,chen2022sampling} for fitting the score
function, and many other settings.

To be specific, let us focus on the example of GANs, which brought about the new era of density
estimation in high-dimensional spaces. The problem setting is the following. We are given access
to an i.i.d. data $X_1,\dots,X_n \in \R^d$ sampled from an unknown distribution $\nu$ and a class of
distributions $\cal G$ on $\R^d$ (the class of available ``generators''). The goal of the learner is to find $\argmin_{\nu' \in \cal G}
D(\nu', \nu)$, where $D$ is some dissimilarity measure (``metric'') between probability
distributions. In the case of GANs this measure is the Jensen-Shannon divergence
$\mathrm{JS}(p,q) \triangleq \mathrm{KL}(p\|{1\over 2}p + {1\over2} q) + \mathrm{KL}(q\|{1\over 2}p + {1\over2} q)$ where $KL(p\|q) = \int p(x) \log {p(x)\over q(x)} dx$ is the Kullback-Leibler
divergence.
As any $f$-divergence, JS has a variational form (see \cite[Example 7.5]{yuryyihongbook}):
$ \mathrm{JS}(p,q) = \log 2 + \sup_{h:\R^d \to (0,1)} \EE_p[h] + \EE_q[\log(1-h)]\,.$
With this idea in mind, we can now restate the objective of minimizing $\mathrm{JS}(\nu',\nu)$ as a game between
a ``generator'' $\nu'$ and a ``discriminator'' $h$, i.e., the GAN's estimator is
\begin{equation}\label{eq:gan_1}
	\tilde \nu \in \argmin_{\nu'} \sup_{h:\R^d \to (0,1)} {1\over n} \sum_{i=1}^n h(X_i) + \EE_{\nu'}[\log(1-h)]\,,
\end{equation}
where we also replaced the expectation over (the unknown) $\nu$ with its empirical version 
$\nu_n \eqdef
\frac1n\sum_{i=1}^n\delta_{X_i}$. 
Subsequently, the idea was extended to other types of metrics, notably the Wasserstein-GAN
\citep{arjovsky2017wasserstein}, which defines
\begin{equation}\label{eqn:intro adversarial}
    \tilde\nu \in \argmin_{\nu' \in \cal G} \sup_{f \in \cal D} \left|\E_{Y\sim\nu'} f(Y) - \frac1n\sum_{i=1}^n f(X_i)\right|, 
\end{equation}
where the set of discriminators $\cal D$ is a class of Lipschitz functions (corresponding to
the variational characterization of the Wasserstein-1 distance).

The final step to turn~\eqref{eq:gan_1} or~\eqref{eqn:intro adversarial} into an algorithm is to
relax the domain of the inner maximization
(``discriminator'') to a parametric class of neural network discriminators $\cal D$. 
Note that replacing $\sup_{h:\R^d\to (0,1)}$ with $\sup_{h\in \cal D}$ effectively changes the
objective from minimizing the JS divergence to minimizing a ``neural-JS'', similar to how MINE~\citep{belghazi2018mutual}
replaces the true mutual
information with a ``neural'' one. This weakening is quite worrisome for a statistician. While the JS divergence is a strong statistical
distance, as it bounds total variation from above and from
below~\cite[Eq. (7.39)]{yuryyihongbook}, the ``neural-JS'' is unlikely to possess any such properties.

How does one justify this restriction to a simpler class $\cal D$? A practitioner would say that while
taking $\max_{h\in \cal D}$ restricts the power of the discriminator, the design of $\cal D$ is
fine-tuned to picking up those features of the distributions that are relevant to the human
eye.\footnote{Implying in other words, that whether or not total variation $\TV(\tilde \nu, \nu)$ is high is
irrelevant as long as the generated images look ``good enough'' to humans.} A theoretician,
instead, would appeal to universal approximation results about neural networks to claim that
restriction to $\cal D$ is almost lossless.

The purpose of this paper is to suggest, and prove, a third explanation: the answer is in
the \emph{regularity} of $\nu$ itself. Indeed, we show that the restriction of
discriminators to a very small class $\cal D$ in~\eqref{eq:gan_1} results in almost no loss of minimax statistical guarantees, even if $\cal D$ is far from being a universal approximator. That is, the minimizing distribution $\tilde \nu$ selected with respect to a weak form of the distance enjoys almost minimax optimal guarantees with respect to the strong total variation distance, provided
that the true distribution $\nu$ is regular enough.  Phrased yet another way,  even though the ``neural''
distance is very coarse and imprecise, and hence the minimizer selected with respect to it might be expected to only fool very naive discriminators, in reality it turns out to fool any arbitrarily complex, but bounded discriminator.

Let us proceed to a more formal statement of our results. One may consult Section~\ref{sec:notation} for
notation. We primarily focus on two
classes of distributions on $\R^d$: first, $\cal P_S(\beta,d, C)$ denotes the set of distributions
supported on the $d$-dimensional unit ball $\bb B(0,1)$ that have a density with finite $L^2$ norm and whose
$(\beta,2)$-Sobolev norm, defined in \eqref{eqn: def sobolev norm}, is bounded by $C$; second,  $\cal P_G(d) = \{\mu * \cal N(0,1) :
\operatorname{supp}(\mu) \subseteq \bb B(0,1)\}$ is the class of Gaussian mixtures
with compactly supported mixing distribution. We remind the reader that the total variation distance has the variational form 
$\TV(p,q) = \sup_{h: \R^d \to [0,1]} \EE_p h - \EE_q h\,.$ Our first result concerns the following class of discriminators:
\begin{equation}\label{eqn: def D_1}
    \cal D_1 = \{x\mapsto\one\{x^\top v \geq b\} : v \in \R^d, b\in\R\}\,,
\end{equation}
the class of affine classifiers, which can be seen as a single layer perceptron with a threshold
non-linearity. 

\begin{theorem}\label{thm:intro}
For any
$\beta > 0$, $d\geq1$ and $C>0$, there exists a finite constant $C_1 = C_1(\beta, d, C)$: 
\begin{equation}\label{eqn:intro smooth}
        \sup_{\nu \in \cal P_S(\beta,d, C)} \E \TV(\tilde\nu,\nu) \leq C_1 n^{-\frac{\beta}{2\beta+d+1}}\,,
    \end{equation}
where the estimator $\tilde\nu$ is defined in \eqref{eqn:intro adversarial} with $\cal D = \cal D_1$
and $\cal G=\cal P_S(\beta,d, C)$.
Similarly, for any $d\geq1$ there exists a finite constant $C_2 = C_2(d)$ so that
    \begin{equation*}
        \sup_{\nu \in \cal P_G(d)} \E \TV(\tilde\nu,\nu) \leq C_2
	\frac{(\log(n))^{\frac{2d+2}{4}}}{\sqrt n}\,,
    \end{equation*}
where the estimator $\tilde\nu$ is defined in \eqref{eqn:intro adversarial} with $\cal D = \cal D_1$
and $\cal G=\cal P_G(d)$.
\end{theorem}

Recall the classical result ~\citep{ibragimov1983estimation} which shows that the minimax optimal estimation rate in TV over the class $\cal P_S(\beta,d, C)$ equals $n^{-\beta/(2\beta+d)}$ up to
constant factors. Thus, the estimator in \eqref{eqn:intro smooth} is \textit{almost} optimal, the only difference being that the dimension $d$ is replaced by $d+1$.  Similarly, for the Gaussian mixtures we reach the parametric rate up to
a polylog factor.\footnote{For estimation of Gaussian mixtures in total variation the precise value of the minimax
optimal polylog factor is at present unknown. However, for the $L_2$ distance the minimax rate is
known, and in the course of our proofs (see \eqref{eqn:E_g vs L2}) we show that our estimator only loses a 
 multiplicative factor of $\log(n)^{1/4}$ in loss compared to the optimal $L_2$-rate $\log(n)^{d/4}/\sqrt n$ derived
in~\citep{kim2022minimax}.}

The proof of \Cref{thm:intro} relies on a comparison inequality between total variation and the ``perceptron discrepancy'', or maximum halfspace distance, which we define as
\begin{equation}\label{eqn: def d_H}
    \overline{d_H}(\mu,\nu) \eqdef \sup_{f\in\cal D_1} \{\E_\mu f - \E_\nu f\}.
\end{equation} 
Note first that $\overline{d_H} \leq \TV$ clearly holds since all functions in the class $\cal
D_1$ are bounded by $1$. For the other direction, by proving a generalization of the Gagliardo-Nirenberg-Sobolev inequality we derive the following comparisons.

\begin{theorem}\label{thm:intro comparison} For any
$\beta > 0$, $d\geq1$, there exists a finite constant $C_1 = C_1(\beta, d)$:
\begin{equation}\label{eq:compare}
    \TV(\mu,\nu)^{\frac{2\beta+d+1}{2\beta}}\leq C_1\cdot (\|\mu\|^2_{\beta, 2}+\|\nu\|^2_{\beta, 2})^{\frac{d+1}{4\beta}}\cdot  \overline{d_H}(\mu,\nu)
\end{equation}
holds for all $\mu,\nu \in \cal P_S(\beta,d, \infty)$. Similarly, for any $d\geq1$ there exists a finite constant $C_2 = C_2(d)$ such that 
\begin{equation*}
    \,\TV(\mu,\nu)  \log\left(3 + \frac{1}{\TV(\mu,\nu)}\right)^{-\frac{d+1}{2}} 
    \leq  C_2 \overline{d_H}(\mu,\nu) 
\end{equation*}        
holds for all $\mu,\nu \in \cal P_G(d)$. 
\end{theorem}

We remark that we also show (in \Cref{thm:tightness}) that the exponent $2\beta + d+1\over
2\beta$ in~\eqref{eq:compare} is tight, i.e. cannot be improved in general.

With \Cref{thm:intro comparison} in hand the proof of \Cref{thm:intro} is \textit{notably} simple. For example, 
let us prove~\eqref{eqn:intro smooth} (for full details, see~Section~\ref{sec:proof of intro}). Recall that $X_i\simiid \nu$, $\nu_n$ is the empirical
distribution and $\tilde \nu = \argmin_{\nu' \in \cal P_S} \overline{d_H}(\nu', \nu_n)$. We then
have from the triangle inequality and minimality of $\tilde \nu$:
$$ \overline{d_H}(\tilde \nu, \nu) \le  \overline{d_H}(\tilde \nu, \nu_n)  + 
\overline{d_H}(\nu_n, \nu) \le 2\overline{d_H}(\nu_n, \nu)\,.$$
Thus, from \Cref{thm:intro comparison} we have
\begin{equation}\label{eq:is_1}
	\TV(\tilde \nu, \nu) \lesssim \overline{d_H}(\nu_n, \nu)^{\frac{2\beta}{2\beta+d+1}}\,.
\end{equation}
Lastly, we recall that $\cal D_1$ is a class with finite VC-dimension and thus from uniform
convergence (Theorem 8.3.23, \cite{vershynin2018high}) we have for some dimension-dependent constant $C(d)$ that
$$ \EE[\overline{d_H}(\nu_n, \nu)] \le {C(d)\over \sqrt{n}}\,.$$
Thus, applying expectation and Jensen's inequality to~\eqref{eq:is_1} we get 
$$ \EE[\TV(\tilde \nu, \nu)]\lesssim   \EE[\overline{d_H}(\nu_n, \nu)^{\frac{2\beta}{2\beta+d+1}}] \lesssim 
 \EE[\overline{d_H}(\nu_n, \nu)]^{\frac{2\beta}{2\beta+d+1}} \lesssim  n^{-\frac{\beta}{2\beta+d+1}}
$$
as claimed.

While we believe that \Cref{thm:intro} provides theoretical proof for the efficacy of simple discriminators, it has several theoretical and practical deficiencies that we need to address.
First, the guarantee in \Cref{thm:intro} for $\cal P_S$ is strictly worse than the minimax optimal rate, which is ${\O}\left(n^{\frac{-\beta}{2 \beta+d}}\right)$ (see e.g. \cite{ibragimov1983estimation}).

Second, from the implementation point of view, computing the distance $\overline{d_H}$ behind \Cref{thm:intro} is
impractical. Indeed, finding the halfspace with maximal separation
between even two empirical measures is a nonconvex, non-differentiable problem and takes super-poly
time in the dimension $d$ assuming $\mathsf{P}\neq\mathsf{NP}$ \citep{guruswami2009hardness}, and $\omega(d^{\omega(\eps^{-1})})$ time for $\eps$-optimal agnostic learning between two densities assuming either \textsf{SIVP} or \textsf{gapSVP} \citep{tiegel23a}. 

Finally, even if we disregard the computational complexity, it is unclear how to minimize $\tilde\nu$ concerning $\arg\min_{\nu'} \overline {d_H}(\nu',
\nu_n)$ for given samples. This concern is alleviated by the fact that any $\tilde \nu$ satisfying
$\overline {d_H}(\tilde \nu, \nu_n) = \O (\sqrt{d/n})$ will work without degrading our performance guarantee, and thus only an approximate
minimizer is needed. 

To address the above concerns, we make two changes to improve $\overline{d_H}$ in the min-distance density estimator: (a)
we replace the perceptron class $\cal D_1$ in \eqref{eqn: def D_1} with a generalized perceptron $\cal D_\gamma$ for $\gamma \in (0,2)\setminus\{1\}$ defined as:
\begin{equation}\label{eqn: def D_gamma}
    \cal D_\gamma = \{x\mapsto |x^\top v- b|^{\gamma - 1\over 2} : v \in \R^d, b\in\R\}\,,
\qquad \gamma \in (0,2)\setminus\{1\}
\end{equation}
(b) we replace the perceptron discrepancy $\overline{d_H}$ (defined
with respect to the ``best'' perceptron) with an ``\emph{average}'' version $d_H$ defined in~\eqref{eq:dh} (see \eqref{eqn:slice equiv form} for the definition with general $\gamma$). Therefore, one does not even need to find an approximately optimal half-space, as random half-spaces provide sufficient discriminatory power. These changes are made precise in \Cref{sec:equiv formulations}.

Our \Cref{cor:d_a density est} and \Cref{prop:optimize_alpha} show that these two changes allow us to achieve a total variation rate of $n^{-\beta/(2\beta+d+\gamma)}$ for the min distance density estimator. The improved rate comes to (within $\operatorname{polylog}(n)$) minimax optimality as $\gamma \to 0$ adaptively with $n$, addressing our first concern.

Somewhat unexpectedly, we discover that the average perceptron
discrepancy $d_H$ exactly equals
Sz\'ekely and Rizzo's energy distance $\cal{E}_1$ (Definition 1, \cite{szekely2013energy}), defined as  
\begin{equation}\label{eqn:gen energy}
    \cal{E}_1^2(\mu,\nu) \triangleq \mathbb{E}\left[ 2\|X-Y\| - \|X-X'\| -
    \|Y-Y'\|\right], \qquad (X,X',Y,Y')\sim \mu^{\otimes 2}\otimes\nu^{\otimes 2}\,,
\end{equation}
where $\| \cdot \|$ is the usual Euclidean norm on $\R^d$. Thus, our 
\Cref{cor:d_a density est} (with $\gamma=1$) shows that minimizing $\min_{\nu'} \cal E_1(\nu',\nu_n)$ gives a density
estimator with rates over $\cal P_S$ and $\cal P_G$ as given in \Cref{thm:intro}. 
Furthermore, for $\gamma >1$ the corresponding average over $\mathcal{D}_\gamma$ results in the distance $\mathcal{E}_\gamma$ known as \textit{generalized energy distance}, defined in the same paper. See \Cref{sec:equiv formulations} for details.

This discovery addresses our second and final concern in the following sense. Treating our distance $\cal E_\gamma(\widetilde\nu^{\operatorname{gen.}}_m,\nu^{\operatorname{target}}_n)$ as a loss function for solving $\widetilde\nu^{\operatorname{gen.}}_m$, the closed-form computation of $\mathcal{E}$ via \eqref{eqn:gen energy} requires only a polynomial
$O(n^2+m^2)$ steps, and is friendly to gradient evaluations. 

Overall, our message from an algorithmic point of view is as follows: assuming one has access to a parametric family of generators sampling from $\nu_\theta$ for parameters $\theta \in \mathbb{R}^p$, and if one can compute $\nabla_\theta$ of the generator forward pass, e.g., via pushforward of a reference distribution under a smooth transport map or neural network-based models (\cite{wang2022minimax, marzouk2023distribution}), then one can fit $\theta$ to the empirical sample $\nu_n$ by running stochastic gradient descent steps:
\begin{itemize}
    \item sample $m$ samples from $\nu_\theta$ and form the empirical distribution $\nu_m^{\prime}$,
    \item compute the loss $\mathcal{E}_\gamma\left(\nu_m^{\prime}, \nu_n\right)$ and backpropagate the gradient with respect to $\theta$,
    \item update $\theta \leftarrow \theta-\eta \nabla_\theta \mathcal{E}_\gamma\left(\nu_m^{\prime}, \nu_n\right)$ for some step size $\eta$.
\end{itemize}
Again, computing $\cal E_\gamma(\nu'_m,\nu_n)$ via \eqref{eqn:gen energy} requires
$O(n^2+m^2)$ steps and is gradient-friendly. 
Note, though, that obtaining and certifying good parameterizations for generators with densities satisfying Sobolev norm constraints is nontrivial and may require careful regularization.
In this work, however, our focus will be on the discriminator part of GANs, as opposed to the generator.

\subsection{Contributions}\label{sec: contributions}
    This work focuses on studying the discriminator classes for adversarially estimating smooth densities, our main contributions are as follows. We show that $\beta$-smooth distributions, Gaussian mixtures and discrete distributions that are far apart in total variation distance must possess a halfspace on which their mass is substantially different (\Cref{thm:intro comparison,prop:d_a comparison,prop:discrete comparison}). 
    
     We apply the separation results to density estimation problems, showing that an ERM density estimator nearly attains the minimax optimal density estimation rate with respect to $\TV$ over the aforementioned distribution classes (\Cref{thm:intro,cor:d_a density est,thm:discrete estimation}), suggesting that halfspaces indicators are (almost) sufficient for discriminator classes in GAN \eqref{eqn:intro adversarial} in order to achieve (close to) minimax optimal density estimation in the considered classes of distributions. 

     In \Cref{sec:equiv formulations} we show that the average halfspace separation distance $d_H$ is equal up to constant to the energy distance $\cal{E}_1$ (\Cref{prop:dh_energy}), which has many equivalent expressions: as a weighted $L^2$-distance between characteristic functions (\Cref{prop:norm equiv form}), as the sliced Cramér-$2$ distance (\Cref{prop:slice equiv form}), as an IPM/MMD/energy distance (\Cref{sec:mmd form}), and as the $L^2$-norm of the Riesz potential (\Cref{prop:riesz}).
     
     We generalize the average halfspace distance $d_H$ to include an exponent $\gamma\in(0,2)$, corresponding to the generalized energy distance $\cal E_\gamma$. Consequently, we discover that if instead of thresholded linear features $\one\{ v^\top x > b\}$ we use the non-linearity $|v^\top x - b|^\gamma$, smooth distributions and Gaussian mixtures can be separated even better (\Cref{prop:d_a comparison}). Combined with the fact that $\cal E_\gamma$, similarly to $d_H$, decays between population and sample measures at the parametric rate (\Cref{lem:d_a concentration}), the ERM for $\cal E_\gamma$ reduces the slack in the density estimation rate, achieving minimax log-optimality (\Cref{prop:optimize_alpha}). This result, combined with its strong approximation properties, supports its use in modern generative models (e.g. \cite{ho2020denoising,goodfellow2014generative,rombach2022high,ramesh2022hierarchical}). 

     Finally, \Cref{prop: tst bad} shows that recent work applying $\overline{d_H}$ for two-sample testing is sub-optimal over the class of smooth distributions in the minimax sense.

\subsection{Related Work}\label{sec:related works}
An important concept related to our work is the Integral Probability Metrics (IPMs) 
\begin{equation}\label{eqn:def_IPM}
    d^{\text{IPM}}_{\cal D}(\P,\Q)\eqdef \sup_{f\in\cal D}|\E_\P f - \E_\Q f|
\end{equation}
where $\mathcal{D}$ is the discriminator class. Examples include $\TV$ when $\mathcal{D}$ is the class of bounded functions and $W_1$ when $\cal D$ is the class of Lipschitz functions. IPM distances represent the max separation between two distributions one gets from an adversary powered with $\mathcal{D}$, and can be viewed as the generator objective in GAN \eqref{eqn:intro adversarial}.

In terms of distance comparison inequalities on smooth density classes similar to our
\Cref{thm:intro comparison}, the closest work we found was \cite{CHAE2020108771}, in which the
authors have shown comparison between $\TV\lesssim W_1^{{\beta}/{(\beta+1)}}$ for $(L_1, \beta)$
Sobolev smooth densities on $\mathbb{R}$. This inequality is also optimal in the exponent.
Subsequently, \cite{chae2024wasserstein} extended comparison inequality to Besov densities on
$\mathbb{R}^d$ and $W_p$ metrics. These results\footnote{We thank anonymous reviewers for pointing
them to us.} serve the same purpose as ours: they show that achieving optimal estimation in a
``weak'' norm (such as $W_1$) implies optimal estimation rate under a ``strong'' norm. Since $W_1$
corresponds to a non-parametric discriminator class of all Lipschitz functions, this result 
does not explain why simple discriminators work so well in practice. 

Another paper related to distance comparison is \cite{bai2018approximability} whose authors study comparison between the Wasserstein distance $W_1$ and the IPM $d_{relu}$ defined by the discriminator class $\cal D=\{x\mapsto \operatorname{Relu}(x^\top v+b):b,\|v\|\leq1\}$. They show \cite[Theorem 3.1]{bai2018approximability} that $\sqrt{\kappa/d} W_1 \lesssim d_{relu} \lesssim W_1$ for Gaussian distributions with mean in the unit ball, where $\kappa$ is an upper bound on their condition numbers and $d$ is the dimension. They obtain results for other distribution classes (Gaussian mixtures, exponential families), but for each of these they use a different class of discriminators that is adapted to the problem. 

In the density estimation literature, an estimator of the form \eqref{eqn:intro adversarial} applied on smooth densities first appeared in the famous work of \cite{yatracos1985rates}. Instead of indicators of halfspaces, they consider the class of discriminators $$\cal Y_\epsilon \eqdef \{\one\{\D\nu_i/\D\nu_j \geq 1\}:1 \leq i,j \leq N(\epsilon, \cal G)\},$$where $\nu_1, \dots, \nu_{N(\epsilon, \cal G)}$ forms a minimal $\epsilon$-$\TV$ covering of the class $\cal G$ and $N(\epsilon, \cal G)$ is the so-called covering number. 
Writing $d_Y(\mu,\mu') = \sup_{f\in\cal Y} (\E_\mu f-\E_{\mu'} f)$, it is not hard to prove that $|\TV - d_Y| = O(\epsilon)$ on $\cal G\times \cal G$ and that $\E d_Y(\nu,\nu_n) \lesssim \sqrt{\log N(\epsilon_n, \cal G)/n}$ by a union bound coupled with a binomial tail inequality. From here $\E \TV(\tilde\nu,\nu) \lesssim \sqrt{\log N(\epsilon, \cal G)/n} + \epsilon$ follows by the triangle inequality (here $\tilde\nu$ is defined as in \eqref{eqn:intro adversarial} with $\cal D = \cal Y$). Note that in contrast to our perceptron discrepancy $\overline{d_H}$, Yatracos' estimator attains the optimal rate on $\cal G = \cal P_S$, corresponding to the choice $\epsilon=\epsilon(n) \asymp n^{-\beta/(2\beta+d)}$. 

In terms of GAN (or more generally any adversarial) density estimation, a series of papers 
(\cite{singh2018nonparametric}, 
\cite{uppal2019nonparametric},
\cite{chen2020distribution},
\cite{belomestny2021rates},
\cite{liang2021well},
\cite{chae2022rates},
\cite{stephanovitch2023wasserstein},
etc)
study the problem of density estimation over smoothness classes with respect to IPM distances.
These works typically choose the discriminator class to be a large neural network of size growing
with $n, \beta$ and giving a universal approximation of some non-parametric discriminator class dependent on $\beta$. Given
such a fine approximation to discriminator class, one obtains
IPM error bounds (on e.g. $W_1$), which is  then converted to $\TV$ bound via
comparison inequalities. In this paper, we stress again, the discriminator class is extremely
small and weak (a collection of half-spaces) and does not depend on
smoothness $\beta$ or number of samples $n$. We provide more detailed literature review and
discussion in \Cref{appendix:related_works}. 

Another paper by \cite{oko2023diffusion} derives minimax density estimation guarantees for a class of diffusion-based estimators. Their result is similar to ours in that it obtains rigorous (near-)optimal guarantees for a method that is a realistic model of what is currently done in practice. However, their analysis also rely crucially on the universal approximation property of neural networks for the score function.

In this present work,  we mainly focus on the fixed and interpretable discriminator class $\{x\mapsto f(x^\top v-b):\|v\|\leq1,b\in\R\}$ for a set of prescribed $f$ and derive comparisons to $\TV$ for smooth distributions, Gaussian mixtures, and discrete distributions. In addition, we prove the (near)-optimality of our results (for smooth densities) and also derive nonparametric estimation rates for the corresponding GAN density estimators. 
Our work is, to the best of our knowledge, closest to the first that satisfies:
    \begin{enumerate}
        \item Uses a vanilla GAN-type approach (\eqref{eqn:intro adversarial}) with ERM (which is close to practice) and obtains total variation rates that are (close to) minimax optimal.
        
        \item Discriminator class is oblivious to the smoothness parameters: parts of our results are also $n$-oblivious, and the ones that depend on $n$ only concern the activation function. Our proposed discriminator class is also \textit{very} simple: we use a composition of a single non-linearity and a linear
	function (i.e. it is a VC class with dimension $d+1$).
    \end{enumerate}
    
The closest works to the above objectives are \cite{belomestny2021rates} and \cite{stephanovitch2023wasserstein}, model 3, both of which derived log-optimal $\TV$ risk exponent $\widetilde O\left(n^\frac{-\beta}{2\beta+d}\right)$ via \eqref{eqn:intro adversarial}. However, they both relied on the universal approximation of neural networks which has to be larger than some function of $\beta$, the smoothness parameter, as well as $n$, the sample size.
Hence another message, in contrast to the above works on discriminator networks, is that on an \emph{extremely} simple class of discriminators parametrized by half-spaces (even with finite VC dimension) without the need for training/optimization, one can get almost optimal density estimation rates by averaging random halfspace distances (\Cref{cor:d_a density est}). Moreover, if \emph{any} generator fits empirical to within $O(1/\sqrt{n})$ of our distance (this rate is oblivious to $\beta$), we get TV with high probability (\Cref{prop:stopping crit}).

Independent of this work, 
recent results by \cite{paik2023maximum} investigate the halfspace
separability of distributions for the setting of two-sample testing. However, their focus was on
the asymptotic power of the test as the number of samples grows to infinity. Our lower bound construction presented in \Cref{sec: lowers} proves that their proposed test is sub-optimal in the minimax setting. See \Cref{sec: negative} for a more detailed discussion. 
\subsection{Notation}\label{sec:notation} The symbols $O,o,\Theta,\Omega,\omega$ follow the
conventional ``big-O" notation, and $\tilde{O}, \tilde o$ hide polylogarithmic factors. We use $\lesssim$, $\gtrsim$ and $\asymp$ throughout our calculations to hide multiplicative constants that are irrelevant (depending on the context). 
% Analysis
Given a vector $x\in\R^d$, we write $\|x\|$ for its Euclidean norm and $\langle x,y\rangle\eqdef
x^\top y$ for the Euclidean inner product of $x,y\in\R^d$. The
Gamma function is denoted by $\Gamma$. We write $\bb B(x,r)\eqdef
\{y\in\R^d:\|x-y\|\leq r\}$, $\bb S^{d-1} \eqdef \{x\in\R^d:\|x\|=1\}$ and $\sigma$ for the
unnormalized surface measure on $\bb S^{d-1}$. The surface area of a unit $(d-1)$-sphere is also
written as $\sigma(\mathbb{S}^{d-1})=2\pi ^{d/2} /\Gamma {\bigl (}{\frac {d}{2}}{\bigr )}$. In particular, if $X$ is a
random vector uniformly distributioned on $\mathbb{S}^{d-1}$ then for any $h$ we have
$$ \EE[h(X)] = 
\frac{1}{\sigma(\mbb{S}^{d-1})}
\int_{\R^d} h(y) \D\sigma(y)\,.$$
The convolution between functions/measures is denoted by $*$.  We write $L^p(\R^d)$ for the space
of (equivalence classes of) functions $\R^d\to\C$ that satisfy $\|f\|_p \eqdef
\left(\int_{\R^d}|f(x)|^p\D x\right)^{1/p} < \infty$. 
% probability
The space of all probability distributions on $\R^d$ is denoted as $\cal P(\R^d)$. For a signed
measure $\nu$ we write $\operatorname{supp}(\nu)$ for its support and $M_r(\nu) \eqdef \int
\|x\|^r\D|\nu|(x)$ for its $r$'th absolute moment. Given $\P,\Q \in \cal P(\R^d)$ we write
$\TV(\P,\Q) \eqdef \sup_{A\subseteq \R^d} [\P(A)-\Q(A)]$ for the total variation distance, where
the supremum is over all measurable sets.

% FOurier
Given a function $f\in L^1(\R^d)$, define its Fourier transform as 
\begin{equation*}
    \widehat f (\omega) \eqdef \cal F[f](\omega) \eqdef \int_{\R^d} e^{-i\langle x,\omega\rangle} f(x)\D x. 
\end{equation*}
Given a finite signed measure $\nu$ on $\R^d$, define its Fourier transform as $\cal F[\nu](\omega) \eqdef \int_{\R^d} e^{-i\langle\omega,x\rangle}\D\nu(x)$. We extend the Fourier transform to 
$L^2(\R^d)$ and tempered distributions in the standard manner. Given $f \in L^2(\R^d)$ and $\beta > 0$, define its homogenous Sobolev seminorm of order $(\beta,2)$ as
\begin{equation}\label{eqn: def sobolev norm}
    \|f\|_{\beta,2}^2 \eqdef \int_{\R^d} \|\w\|^{2\beta} |\widehat f(\w)|^2 \D\w. 
\end{equation}
Further, we define two specific classes of functions of interest as follows: $\cal P_S(\beta,d, C) $ is a set of smooth densities while $\cal P_G(d)$ is a set of all Gaussian mixtures with support in the unit ball, formally
\begin{equation*}
\begin{aligned}
   \cal P_S(\beta,d, C) &\eqdef \{\mu\in\cal P(\R^d): \operatorname{supp}(\mu) \subseteq \bb B(0,1),\text{$\mu$ has density $p$ with }\|p\|_{\beta,2} < C \} ,\\
   \cal P_G(d) &\eqdef \{\nu*\cal N(0,I_d): \nu\in\cal P(\R^d),\operatorname{supp}(\nu) \subseteq \bb B(0,1)\}.
\end{aligned}
\end{equation*}
\begin{assumption}\label{assumption:C}
    Throughout the paper we assume that $C$ in the definition of $\cal P_S(\beta,d,C)$ is large enough relative to $\beta$ and $d$, such that $\cal P_S(\beta,d, C/2)$ is non-empty. 
\end{assumption}

\subsection{Structure}

The structure of the paper is as follows. In Section \ref{sec:equiv formulations} we introduce the generalized energy distance, the main object of our study. We show how it relates to the perceptron discrepancy $\overline{d_H}$ and its relaxation $d_H$; we record equivalent formulations of the generalized energy distance, one of which is a novel ``sliced-distance'' form. In \Cref{sec:TV vs energy}, we present our main technical results on comparison inequalities between total variation and the energy distance. In \Cref{sec:estimator} we analyse the density estimator that minimizes the empirical energy distance, and prove \Cref{thm:intro} and \Cref{thm:intro comparison} in \Cref{sec:proof of intro}. In \Cref{sec: negative} we show that the use of $\overline{d_H}$ for two sample testing results in suboptimal performance. We conclude in \Cref{sec:conclusion}. All omitted proofs and auxiliary results are deferred to the Appendix.

\section{The Generalized Energy Distance}\label{sec:equiv formulations}

Given two probability distributions $\mu,\nu$ on $\R^d$ with finite $\gamma$'th moment, the generalized energy distance of order $\gamma \in (0,2)$ between them is defined as 
\begin{equation}\label{eqn:gen energy def}
    \cal E_\gamma(\mu,\nu) = \E\Big[2\|X-Y\|^\gamma-\|X-X'\|^\gamma-\|Y-Y'\|^\gamma\Big], \qquad\text{where } (X,X',Y,Y') \sim \mu^{\otimes 2}\otimes\nu^{\otimes 2}. 
\end{equation}
As we alluded to in the introduction, the proof of \Cref{thm:intro,thm:intro comparison} becomes possible once we relax the supremum in the definition of $\overline{d_H}$ to an \textit{unnormalized} average over halfspaces. In \Cref{sec:average halfspaces} we discuss this relaxation in more detail and identify a connection to the energy distance $\cal E_1$ defined above in \eqref{eqn:gen energy def}. Motivated by this, we study the (generalized) energy distance and give multiple equivalent characterizations of it from \Cref{sec:average halfspaces} to \Cref{sec:riesz}. 

\subsection{From Perceptron Discrepancy to Energy Distance}\label{sec:average halfspaces}

Our first goal is to connect the study of $\overline{d_H}$ to the study of $\cal E_\gamma$ with
$\gamma=1$. To achieve this, we introduce an intermediary, the ``average'' perceptron discrepancy $d_H$.
Given two probability distributions $\mu,\nu$ on $\R^d$, we define
\begin{equation}\label{eq:dh}
	d_H(\mu,\nu) \eqdef \sqrt{\int_{v\in\mbb{S}^{d-1}}\int_{b\in\R} \left(\int_{\langle v,x\rangle\geq b} \D\mu(x)-\D\nu(x)\right)^2 \d{b}\d\sigma(v)}, 
\end{equation}
where $\sigma$ denotes the surface area measure. 

If the two distributions $\mu,\nu$ are
supported on a compact set, then the overall definition can indeed be regarded as a `mean squared' version of perceptron discrepancy, because the integrals over $b$ and $v$ only range over bounded sets. However, in general, the integral over $b$ in the definition of $d_H$ is not normalizable
and that is why we put ``average'' in quotes. Nevertheless, we have the following comparisons
between $d_H$ and $\overline{d_H}$.

\begin{proposition}\label{prop:d_H < d_H}
    For any $\beta > 0$, $d\geq1$, $C>0$, and for all  $\mu,\nu\in\cal P_S(\beta,d, \infty)$, we have
    \begin{equation}\label{eqn:smooth d_H<d_H}
        \sqrt{\frac{\Gamma(d/2)}{4\pi^{d/2}}} d_H(\mu,\nu) \leq \overline{d_H}(\mu,\nu).
    \end{equation}
    Moreover, for all $d\geq1$, there exists a finite constant $C_1 = C_1(d)$ such that for all $\mu,\nu\in\cal P_G(d)$, 
        \begin{equation*}
            \frac{d_H(\mu,\nu)}{\log(3+1/d_H(\mu,\nu))^{1/4}} \leq C_1\overline{d_H}(\mu,\nu).
        \end{equation*}
    \end{proposition}
\begin{proof}
    The proof of \eqref{eqn:smooth d_H<d_H} is immediate after noting that all distributions in $\cal P_S(\beta,d, \infty)$ are supported on the $d$-dimensional unit ball and that $\int_{v\in\bb S^{d-1}}\int_{-1}^1\D b\D\sigma(v)=4\pi^{d/2}/\Gamma(d/2)$. Thus, we focus on the Gaussian mixture case. Write $\mu-\nu = \tau * \phi$ where $\phi$ denotes the density of the standard Gaussian $\cal N(0,I_d)$ and $\tau\in\mc{P}(\R^d)$ is the difference of the two implicit mixing measures. For any $R>0$, we have
\begin{align*}
    \overline{d_H}(\mu,\nu) &\geq \sup_{v \in \bb S^{d-1}, |b|\leq R} \int_{\langle x,v\rangle\geq b} (\tau*\phi)(x) \D x \\
    &\geq \sqrt{\frac{1}{2R \vol_{d-1}(\bb S^{d-1})} \int_{\bb S^{d-1}} \int_{|b|\leq R} \left( \int_{\langle x,v\rangle\geq b} (\tau*\phi)(x)\D x\right)^2 \D b\D\sigma(v)}.
\end{align*}
Now, since $\tau$ is supported on a subset of $\bb B(0,1)$ by definition of the class $\cal P_G(d)$, for any $v \in \bb S^{d-1}$ and $R\geq2$ we have the bound
\begin{align*}
    \int_{|b| > R} \left(\int_{\langle x,v\rangle\geq b} \int_{\R^d} \phi(x-y)\D \tau(y) \D x\right)^2 \D b &\leq \int_{|b|>R} \left(\int_{\langle x, v\rangle \geq|b|} \exp(-(\|x\|-1)^2/2)\D x\right)^2\D b \\
    &\leq \int_{|b|>R} \left(\int_{\|x\|\geq|b|} \exp(-\|x\|^2/8)\D x\right)^2\D b \\
    &\lesssim \exp(-\Omega(R^2)), 
\end{align*}
where we implicitly used that $\int \D \tau=0$ as $\tau$ is the difference of two probability distributions. Choosing $R \asymp \sqrt{\log(3 + 1/d_H(\mu,\nu))}$ concludes the proof. 
\end{proof}

\Cref{prop:d_H < d_H} implies that to obtain a comparison between $\TV$ and $\overline{d_H}$, specifically for lower bounding $\overline{d_H}$, it suffices to consider the relaxation $d_H$ instead.
 The next observation we make is that $d_H$ is in fact equal, up to constant, to the energy distance. 
\begin{theorem}\label{prop:dh_energy}
    Let $\mu,\nu$ be probability distributions on $\R^d$ with finite mean. Then 
    \begin{equation*}
        d_H(\mu,\nu) = \frac{\pi^{(d-1)/4}}{\sqrt{\Gamma(\frac{d+1}{2}})} \cal E_1(\mu,\nu). 
    \end{equation*}
\end{theorem}

\begin{proof}
    This is a direct implication of \eqref{eqn:norm equiv form} and \eqref{eqn:slice equiv form}. We defer the proof to \Cref{prop:slice equiv form} which is its direct generalization as the interpretation of $\mathcal{E}_\gamma$ as the ``average'' $\mathcal{D}_\gamma$ distance for all $\gamma\in(0, 2)$.
\end{proof}

As will be clear from the rest of the paper, it does pay off to study $\cal E_\gamma$ for general
$\gamma$, even though so far we only justified its relevance to the results stated in the introduction
for the case of $\gamma=1$. With this in mind, we proceed to study various properties of the \textit{generalized} energy distances $\{\cal E_\gamma\}_{\gamma\in(0,2)}$.

\subsection{The Fourier Form}\label{sec:fourier}

The formulation of the generalized energy distance that we rely on most heavily in our proofs is the following. 

\begin{proposition}[{{\cite[Proposition 2]{szekely2013energy}}}]\label{prop:norm equiv form}
    Let $\gamma\in(0,2)$ and let $\mu,\nu$ be probability distributions on $\R^d$ with finite $\gamma$'th moment. Then, 
    \begin{equation}\label{eqn:norm equiv form} 
        \cal{E}_{\gamma}^2(\mu,\nu) = F_\gamma(d) \int_{\R^d} \frac{|\widehat\mu(\omega)-\widehat\nu(\omega)|^2}{\|\w\|^{d+\gamma}} \D\omega, 
    \end{equation}
    where we define $F_\gamma(d)=\frac{\gamma2^{\gamma-1}\Gamma\left(\frac{d+\gamma}{2}\right)}{\pi^{d/2}\Gamma(1-\frac{\gamma}{2})}$. 
\end{proposition}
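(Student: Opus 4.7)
The plan is to exploit the classical Riesz-type pointwise identity
\[
\|x\|^\gamma \;=\; \frac{1}{F_\gamma(d)} \int_{\R^d} \frac{1-\cos\langle\w,x\rangle}{\|\w\|^{d+\gamma}}\,\D\w, \qquad x\in\R^d,\ \gamma\in(0,2),
\]
which expresses the fractional Euclidean norm as an integral against a Fourier-type kernel with exactly the right prefactor $F_\gamma(d)$. Once this identity is in hand, the proposition will follow from a single Fubini swap together with the elementary algebraic identity $|a-b|^2 = |a|^2 + |b|^2 - 2\Re(a\bar b)$ applied to $a=\widehat\mu(\w)$, $b=\widehat\nu(\w)$.

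\textbf{Step 1 (the pointwise identity and the constant).} By rotational invariance of the Euclidean norm and the dilation $\w \mapsto \w/\|x\|$, the right-hand side of the pointwise identity equals $\|x\|^\gamma$ times a constant depending only on $(d,\gamma)$. To pin this constant down I would reduce the integral to polar form, use the one-dimensional evaluation $\int_0^\infty (1-\cos t)\, t^{-1-\gamma}\,\D t = -\Gamma(-\gamma)\cos(\pi\gamma/2)$, valid for $\gamma\in(0,2)$, and combine it with the standard beta integral $\int_{\bb S^{d-1}}|\langle e_1,\theta\rangle|^\gamma\,\D\sigma(\theta)$. The reflection formula $\Gamma(1-\gamma/2)\Gamma(\gamma/2)=\pi/\sin(\pi\gamma/2)$, together with one application of Legendre's duplication, then reorganizes the resulting gamma factors into the claimed $F_\gamma(d) = \gamma 2^{\gamma-1}\Gamma((d+\gamma)/2)/(\pi^{d/2}\Gamma(1-\gamma/2))$.

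\textbf{Step 2 (Fubini and collapse to $|\widehat\mu-\widehat\nu|^2$).} Substituting the pointwise identity into the definition of $\cal E_\gamma^2(\mu,\nu)$ and using the paper's convention $\widehat\mu(\w)=\E e^{-i\langle\w,X\rangle}$, I would compute
\[
\E\cos\langle\w,X-X'\rangle=|\widehat\mu(\w)|^2, \quad \E\cos\langle\w,Y-Y'\rangle=|\widehat\nu(\w)|^2, \quad \E\cos\langle\w,X-Y\rangle=\Re\!\left(\widehat\mu(\w)\overline{\widehat\nu(\w)}\right).
\]
The three ``$1$'' terms contribute $2-1-1=0$, so they cancel, and the surviving combination of cosines collapses to $|\widehat\mu(\w)-\widehat\nu(\w)|^2$, yielding the claim. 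The Fubini swap is legal because the finite $\gamma$-th moment hypothesis, combined with Step 1 applied to each of $\|X-Y\|^\gamma$, $\|X-X'\|^\gamma$, $\|Y-Y'\|^\gamma$, makes the three iterated integrals individually finite; at the origin the singularity $\|\w\|^{-d-\gamma}$ is tamed by $|1-\cos\langle\w,x\rangle|\leq\tfrac{1}{2}\|\w\|^2\|x\|^2$, and at infinity the crude bound $|1-\cos\langle\w,x\rangle|\leq 2$ takes over.

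The only real work is Step 1: the gamma-function bookkeeping needed to extract the exact constant $F_\gamma(d)$, which is where I expect to spend most of the effort. Step 2 is then essentially automatic. Since the pointwise identity is classical and the overall statement is attributed to Sz\'ekely--Rizzo, one may alternatively cite \cite{szekely2013energy} for Step 1 and present only the short Fubini argument of Step 2.
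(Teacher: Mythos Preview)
Your proposal is correct and is exactly the standard argument; note, however, that the paper does not give its own proof of this proposition at all --- it simply cites \cite[Proposition~2]{szekely2013energy}. Your two-step outline (the Riesz--L\'evy pointwise identity for $\|x\|^\gamma$ followed by a Fubini swap collapsing the cosines to $|\widehat\mu-\widehat\nu|^2$) is precisely the proof given in that reference, so there is nothing to compare against here.
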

\begin{remark}
    Note that $F_\gamma(d) = \Theta\left(\gamma(2-\gamma)\Gamma\left(\frac{d+\gamma}{2}\right)\pi^{-d/2}\right)$ up to a universal constant. 
\end{remark}
This shows that the generalized energy distance is a weighted $L^2$ distance in Fourier space. The fact that $\cal E_\gamma$ is a valid metric on probability distributions with finite $\gamma$'th moment is a simple consequence of \Cref{prop:norm equiv form}.

\subsection{The MMD and IPM Forms}\label{sec:mmd form}

Another interpretation of the generalized energy distance is through the theory of \emph{Maximum Mean
Discrepancy} (MMD). Given a set $\cal X$ and a positive semidefinite kernel $k:\cal X^2 \to \R$, there is a unique reproducing kernel Hilbert space (RKHS) $\cal H_k$ consisting of the closure of the linear span of $\{k(x,\cdot),x\in\mc{X}\}$ with respect to the inner product
$ \left\langle k(x,\cdot),k(y,\cdot)\right\rangle_{\mc H_k}=k(x,y).$  

For a probability distribution $\mu$ on $\cal X$, define its kernel embedding as $\theta_{\mu} =
\int_{\R^d} k(x,\cdot)\d\mu(x)$. As shown in \cite[Lemma 3.1]{muandet2017kernel}, the kernel embedding $\theta_{\mu}$ exists and belongs to the RKHS $\mc{H}_k$ if $\EE[\sqrt{k(X,X')}] < \infty$ for $(X,X') \sim \mu^{\otimes 2}$ — as is the case for our kernel defined later in \Cref{eq:kernel_def}.
Then, given two probability distributions $\mu$ and $\nu$, the MMD measures their distance in the RKHS by 
$$\mmd_k(\mu,\nu) \eqdef \|\theta_\mu-\theta_\nu\|_{\cal H_k}.$$ 
We refer the reader to
\cite{10.7551/mitpress/4175.001.0001,muandet2017kernel} for more details on the underlying theory. MMD has a closed form thanks to the reproducing property:
\begin{align*}
    \mmd_k^2(P,Q) &= \E\Big[k(X,X')+k(Y,Y')-2k(X,Y)\Big],
\end{align*}
where $(X,X',Y,Y') \sim \mu^2\otimes\nu^2$. Moreover, it also follows that MMD is an  \emph{Integral Probability Metric (IPM)}  where the supremum is over the unit ball of the RKHS $\cal H_K$:
$$\mmd_k(\mu,\nu)=\sup_{f\in\cal H_k:\|f\|_{\cal H_k} \leq
1} \E[f(X)-f(Y)].$$
In our case, we can define the kernel
\begin{align}\label{eq:kernel_def}
    k_\gamma(x,y)=\|x\|^{\gamma}+\|y\|^{\gamma}-\|x-y\|^{\gamma}
\end{align}
for $\gamma\in(0,2)$, which in one dimension corresponds to the
covariance operator of fractional Brownian motion. For a proof of the nontrivial fact that $k_\gamma$ above is positive definite see for example \cite{sejdinovic2013equivalence}. With the choice of $k_\gamma$ it follows trivially from its definition that the generalized energy distance $\cal E_\gamma$ is equal to the MMD with kernel $k_\gamma$, i.e.
$$\cal E_\gamma(\mu,\nu) = \mmd_{k_\gamma}(\mu,\nu)$$for all distributions $\mu,\nu$ with finite $\gamma$'th moment. It is noteworthy that while $\overline{d_H}$ is by definition an IPM, so is its averaged version $d_H$.

A straightforward consequence of the above characterization is the fact that $\cal E_\gamma$ decays at the parametric rate between empirical and population measures. This is not terribly surprising as analogous results hold for arbitrary MMDs with bounded kernel, see for example \cite[Theorem 7]{gretton2012kernel}. Recall that $M_t(\nu)$ denotes the $t$'th absolute moment of the measure $\nu$.

\begin{lemma}\label{lem:d_a concentration}
    Let $\nu$ be a probability distribution on $\R^d$ and let $\nu_n = \frac1n\sum_{i=1}^n\delta_{X_i}$ for an i.i.d. sample $X_1,\dots,X_n$ from $\nu$. Then, for any $\gamma \in (0,2)$, 
    \begin{equation*}
        \E \cal E_\gamma^2(\nu,\nu_n) \leq \frac{2 M_\gamma(\nu)}{n}.
    \end{equation*}
\end{lemma}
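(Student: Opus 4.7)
The plan is a direct V-statistic calculation: expand $\cal E_\gamma^2(\nu,\nu_n)$ via its definition \eqref{eqn:gen energy def}, substitute $\nu_n=\frac{1}{n}\sum_{i=1}^n\delta_{X_i}$, take expectation over the sample, and reduce everything to the single scalar $\E\|X-X'\|^\gamma$ with $X,X'\simiid\nu$. Concretely,
\[
    \cal E_\gamma^2(\nu,\nu_n) = \frac{2}{n}\sum_{i=1}^n \E_{X\sim\nu}\|X-X_i\|^\gamma \;-\; \E_{X,X'\sim\nu}\|X-X'\|^\gamma \;-\; \frac{1}{n^2}\sum_{i,j=1}^n \|X_i-X_j\|^\gamma.
\]

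Next, I would take expectation over the i.i.d.\ sample. By independence, each term in the first sum contributes $\E\|X-X'\|^\gamma$; in the double V-statistic, the $n$ diagonal terms ($i=j$) vanish, while each of the $n(n-1)$ off-diagonal pairs contributes $\E\|X-X'\|^\gamma$, giving a total of $\tfrac{n-1}{n}\E\|X-X'\|^\gamma$. The three contributions collapse to
\[
    \E\cal E_\gamma^2(\nu,\nu_n) = 2\E\|X-X'\|^\gamma - \E\|X-X'\|^\gamma - \frac{n-1}{n}\E\|X-X'\|^\gamma = \frac{\E\|X-X'\|^\gamma}{n}.
\]
It then remains to bound $\E\|X-X'\|^\gamma$ in terms of $M_\gamma(\nu)$. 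Using $\|X-X'\|\leq\|X\|+\|X'\|$ together with the elementary estimate $(a+b)^\gamma\leq 2^{\max(\gamma-1,0)}(a^\gamma+b^\gamma)$ valid for $a,b\geq 0$ and $\gamma\in(0,2)$, one gets $\E\|X-X'\|^\gamma\leq 2^\gamma M_\gamma(\nu)\leq 4 M_\gamma(\nu)$, which already implies the claim with room to spare; in particular, the factor $d^{\gamma/2}$ is not actually needed in this route.

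There is no real obstacle here; the only mild subtlety is the careful bookkeeping of diagonal versus off-diagonal terms in the empirical V-statistic, after which the final bound is a one-line triangle-inequality estimate. An equivalent Fourier-side derivation passes through $\E|\widehat\nu_n(\omega)-\widehat\nu(\omega)|^2=\tfrac{1}{n}(1-|\widehat\nu(\omega)|^2)$ plugged into \Cref{prop:norm equiv form}; Fubini and the $(\mu,\nu)=(\delta_0,\delta_z)$ case of the same proposition (which reads $F_\gamma(d)\int\frac{1-\cos\langle z,\omega\rangle}{\|\omega\|^{d+\gamma}}\d\omega=\|z\|^\gamma$) recover the same collapse to $\E\|X-X'\|^\gamma/n$, showing the identity is really a bias--variance decomposition for the empirical characteristic function.
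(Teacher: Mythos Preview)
Your argument is correct and in fact sharper than the paper's. The paper works on the Fourier side via \Cref{prop:norm equiv form}: it writes $\E|\widehat\nu_n-\widehat\nu|^2 = \tfrac{1}{n}(\var\cos\langle X,\omega\rangle+\var\sin\langle X,\omega\rangle)$, then \emph{upper-bounds} this variance by $\E[(\cos\langle X,\omega\rangle-1)^2+\sin^2\langle X,\omega\rangle]$ and invokes \Cref{lem:1+2a integrability} and some $\Gamma$-function arithmetic (\Cref{lem:gamma fn}); the ratio $\Gamma(\tfrac{d+\gamma}{2})/\Gamma(\tfrac d2)$ is what produces the dimensional factor $d^{\gamma/2}$. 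Your direct V-statistic expansion bypasses all of this and yields the exact identity $\E\cal E_\gamma^2(\nu,\nu_n)=\tfrac{1}{n}\E\|X-X'\|^\gamma$, from which the stated bound follows with the better, dimension-free constant $4$. As you note, the Fourier route can be made to recover the same identity if one keeps $1-|\widehat\nu|^2$ exact and recognizes $F_\gamma(d)\int\|\omega\|^{-d-\gamma}(1-\cos\langle z,\omega\rangle)\,\d\omega=\|z\|^\gamma$; the paper's loss comes precisely from the variance-to-second-moment relaxation. One cosmetic slip: for $\gamma<1$ your intermediate bound reads $\E\|X-X'\|^\gamma\le 2M_\gamma(\nu)$ rather than $2^\gamma M_\gamma(\nu)$, but this only strengthens the final $\le 4M_\gamma(\nu)$.
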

For a high-probability bound when $\nu$ is compactly supported, see \Cref{lem:subgauss concentration}.

\begin{proof} 
Let $\tilde X_1,\dots,\tilde X_n$ be an additional i.i.d. sample from $\nu$, and write $\tilde\nu_n$ for the corresponding empirical measure. Using the definition of $\cal E_\gamma$ in \eqref{eqn:gen energy def}, we can compute
\begin{align*}
    \E \cal E_\gamma^2(\nu_n, \tilde\nu_n) &= \E\Big[\frac{2}{n^2} \sum_{i=1}^n\sum_{j=1}^n \|X_i-\tilde X_j\|^\gamma - \frac{1}{n^2}\sum_{i=1}^n\sum_{j=1}^n \|\tilde X_i-\tilde X_j\|^\gamma \\&\qquad\qquad - \frac{1}{n^2}\sum_{i=1}^n\sum_{j=1}^n \|X_i- X_j\|^\gamma\Big] \\
    &= \frac2n\E\|X_1-X_2\|^\gamma.  
\end{align*}
The conclusion then follows from taking the expectation of the expression $$\E\Big[\cal E^2_\gamma(\tilde\nu_n,\nu_n) \Big|\nu_n\Big] = \cal E^2_\gamma(\nu,\nu_n) + \frac1n\E\|X_1-X_2\|^\gamma$$ and the inequality $|x+y|^\gamma \leq 2^{\max\{0,\gamma-1\}}(|x|^\gamma+|y|^\gamma)$ for all $x,y\in\R$. 
\end{proof}

As a clarification remark, while IPM distances also appear in the formulation of GAN \eqref{eqn:intro adversarial} with respect to the discriminator class $\mathcal{D}$, our distance $\mathcal{E}_\gamma$ is not the IPM of $\mathcal{D}_\gamma$, but rather that of the RKHS ball of $k_\gamma$ defined above.  For $\gamma=1$, the IPM for $\mathcal{D}_\gamma$ returns $\overline{d_H}$, which is lower bounded by (up to constant, see \Cref{prop:d_H < d_H}) $d_H =\Theta(\mathcal{E}_1)$. For $\gamma\neq 1$, we leave the study of the IPM for $\mathcal{D}_\gamma$ as well as the characterization of the RKHS ball of $k_\gamma$ out of scope. Despite not being precisely the IPM of $\mathcal{D}_\gamma$, our next characterization relates $\mathcal{E}_\gamma$ to $\mathcal{D}_\gamma$ in the same way $d_H$ is to $\mathcal{D}_1$ via a ``slicing'' perspective.

\subsection{The Sliced Form}
Another equivalent characterization of the generalized energy distance is in the form of a \textit{sliced} distance. Sliced distances are calculated by first choosing a random direction on the unit sphere, and then computing a one-dimensional distance in the chosen direction between the projections of the two input distributions. For $\gamma \in (0,2)$ define the function
\begin{equation}\label{eqn:psi_gamma def}
\psi_\gamma(x) = \begin{cases}
    |x|^{(\gamma-1)/2} &\text{for } \gamma\neq1 \\ \one\{x\geq0\} &\text{otherwise.}
\end{cases}
\end{equation}
The following result, to the best of our knowledge, has not appeared in prior literature except for the case of $\gamma = 1$. This is also the direct generalization of \Cref{prop:dh_energy}.
\begin{theorem}\label{prop:slice equiv form}
    Let $\gamma\in(0,2)$ and let $\mu,\nu$ be probability distributions on $\R^d$ with finite $\gamma$'th moment. Then
    for $(X, Y)\sim \mu\otimes \nu$ we have \begin{equation}\label{eqn:slice equiv form}
        \cal E^2_\gamma(\mu,\nu) = \frac{1}{S_\gamma}\int_{\bb S^{d-1}}\int_\R \Big[\E\psi_\gamma(\langle X,v\rangle-b)-\E\psi_\gamma(\langle Y,v\rangle-b)\Big]^2\D b\D\sigma(v), 
    \end{equation}
    where $S_\gamma=
    \frac{\pi^{\frac{d}{2}+1} \Gamma(1-\frac{\gamma}{2})}
    {\gamma 2^{\gamma-1} \Gamma(\frac{d+\gamma}{2}) \cos^2(\frac{\pi(\gamma-1)}{4})\Gamma(\frac{1-\gamma}{2})^2}
    $ when $\gamma\neq1$ and $S_1=\frac{\pi^{\frac{d-1}{2}}}{\Gamma(\frac{d+1}{2})}$. 
\end{theorem}
The proof of \Cref{prop:slice equiv form} hinges on computing the Fourier transform of the function $\psi_\gamma$, which can be interpreted as a tempered distribution. We point out a special property of the integral on the right hand side of \eqref{eqn:slice equiv form}. After expanding the square, one finds that the individual terms in the sum are not absolutely integrable for $\gamma\neq1$. However, due to cancellations within the squared quantity, the integral is finite. 

As claimed, using the language of \cite{nadjahi2020statistical}, \Cref{prop:slice equiv form} allows us to interpret $\cal E_\gamma$ as a \emph{sliced} probability divergence. Given $v\in\bb S^{d-1}$, write $\theta_v = \langle v,\cdot\rangle$ and $\theta_v\#\nu=\nu\circ\theta_v$ for the pushforward of $\nu$ under $\theta_v$. We have
\begin{equation*}
    S_\gamma(d)  \cal E^2_\gamma(\mu,\nu) = S_\gamma(1)\int_{\bb S^{d-1}} \cal E^2_\gamma(\theta_v\#\mu,\theta_v\#\nu) \D\sigma(v). 
\end{equation*}
We may also observe that the energy distance $\cal E_1$ is equal to the sliced Cramér-$2$ distance up to constant, which has been studied recently by both theoretical and empirical works \citep{tabor2018cramer,9746016}.\footnote{The Cramér-$p$ distance is simply the $L^p$ distance between cumulative distribution functions. }

\subsection{The Riesz Potential Form}\label{sec:riesz}
The generalized energy distance can also be linked to the Riesz potential \citep[Chapter 1.1]{landkof1972foundations}, which is the inverse of the fractional Laplace operator. Given $0 < s < d$, the Riesz potential $I_s f$ of a compactly supported signed measure $f$ on $\R^d$ is defined (in a weak sense) by
\begin{align*}
    {\displaystyle I_{s}f=f*K_{s}}, 
\end{align*}
where 
$K_{s}(x)=c_s^{-1}\|x\|^{s-d}$ and $c_{s}=\pi ^{{d/2}}2^{s} \Gamma (s/2) \Gamma((d-s)/2)^{-1}$. The Fourier transform of the Riesz kernel is given by
$ \widehat {K_s}(\w)= \|\w\|^{{-s}}$, interpreted as a tempered distribution.
The following proposition is derived by setting $s=\frac{d+\gamma}{2}$ and using the Fourier form (\Cref{prop:norm equiv form}) of the energy distance.
\begin{proposition}\label{prop:riesz}
    Let $\gamma\in(0,\min\{d,2\})$ and let $\mu,\nu$ be compactly supported probability distributions on $\R^d$. Then
    \begin{align}\label{eq:E=Riesz}
        \cal E_\gamma(\mu,\nu) = (2\pi)^{d/2} \sqrt{F_\gamma(d)} \| I_{\frac{d+\gamma}{2}} (\mu-\nu) \|_2.
    \end{align}
\end{proposition}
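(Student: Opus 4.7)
The proof should flow directly from combining the Fourier characterization in \Cref{prop:norm equiv form} with Plancherel's theorem. My plan is as follows.

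First, I would identify $I_{s}(\mu-\nu)$ with $(\mu-\nu) * K_s$ in the sense of tempered distributions. Since $\widehat{K_s}(\omega) = \|\omega\|^{-s}$ and the Fourier transform turns convolutions into products, we formally have
\begin{equation*}
    \cal F\left[I_s(\mu-\nu)\right](\omega) = \frac{\widehat{\mu}(\omega) - \widehat{\nu}(\omega)}{\|\omega\|^{s}}.
\end{equation*}
Specializing to $s = (d+\gamma)/2$ and using \Cref{prop:norm equiv form}, the right-hand side of \eqref{eqn:norm equiv form} can be rewritten as
\begin{equation*}
    \cal E_\gamma^2(\mu,\nu) = F_\gamma(d) \int_{\R^d} \left| \cal F\left[I_{(d+\gamma)/2}(\mu-\nu)\right](\omega) \right|^2 \D\omega.
\end{equation*}

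Next I would invoke Plancherel's theorem. With the paper's Fourier convention $\widehat{f}(\omega) = \int e^{-i\langle x,\omega\rangle} f(x)\,\D x$, Plancherel reads $\|\widehat{f}\|_2^2 = (2\pi)^d \|f\|_2^2$. Applying this to $f = I_{(d+\gamma)/2}(\mu-\nu)$ yields
\begin{equation*}
    \cal E_\gamma^2(\mu,\nu) = (2\pi)^d F_\gamma(d) \left\| I_{(d+\gamma)/2}(\mu-\nu) \right\|_2^2,
\end{equation*}
and taking square roots gives exactly \eqref{eq:E=Riesz}.

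The main subtlety — and the step I would devote the most care to — is justifying that $I_{(d+\gamma)/2}(\mu-\nu)$ actually belongs to $L^2(\R^d)$ so that the formal manipulation above is rigorous. Since $\mu$ and $\nu$ are compactly supported probability measures, $\widehat{\mu} - \widehat{\nu}$ is smooth, bounded by $2$, and vanishes at the origin (because $\widehat{\mu}(0) = \widehat{\nu}(0) = 1$) with a first-order Taylor expansion. Hence $|\widehat{\mu}(\omega)-\widehat{\nu}(\omega)|^2 \lesssim \min(\|\omega\|^2, 1)$, which makes the integrand $|\widehat{\mu}(\omega)-\widehat{\nu}(\omega)|^2 / \|\omega\|^{d+\gamma}$ integrable precisely under the hypothesis $\gamma \in (0, \min\{d,2\})$: the exponent $d+\gamma < 2d$ ensures integrability near the origin after using the $\|\omega\|^2$ bound, and $d+\gamma > d$ handles integrability at infinity. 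Consequently $\|\omega\|^{-(d+\gamma)/2}(\widehat\mu-\widehat\nu)(\omega) \in L^2$, so by Plancherel the tempered distribution $I_{(d+\gamma)/2}(\mu-\nu)$ has an $L^2$ representative and the identity holds.
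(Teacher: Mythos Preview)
Your proposal is correct and follows essentially the same route as the paper: identify $\widehat{I_s(\mu-\nu)}=\|\omega\|^{-s}(\widehat\mu-\widehat\nu)$, apply Plancherel, and invoke \Cref{prop:norm equiv form}. The paper dispatches the technical justification by citing Landkof's results on tempered distributions, whereas you verify the $L^2$-integrability directly; note that your near-origin check actually only needs $\gamma<2$ (the condition $\gamma<d$ is what guarantees $0<s<d$ so that the Riesz kernel $K_s$ itself is well-defined as a locally integrable function and tempered distribution).
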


\section{Main Comparison: TV Versus Energy}\label{sec:TV vs energy}

After considering the connection of the perceptron discrepancy $\overline{d_H}$ to the energy distance in \Cref{sec:equiv formulations}, we turn to some of our main technical results, which provide novel quantitative comparisons between $\{\cal E_\gamma\}_{\gamma\in(0,2)}$ and the total variation distance. In \Cref{sec:riesz upper bd} we show that the generalized energy distance is upper bounded by total variation for compactly supported distributions. In \Cref{sec:TV energy comparison absolutely cont} we derive lower bounds on the generalized energy distance in terms of the total variation distance over the two distribution classes that we have introduced, namely smooth distributions and Gaussian mixtures. Finally, in \Cref{sec:TV energy discrete} we turn to the case of discrete distributions, which requires alternative techniques.

\subsection{Upper Bound --- Arbitrary Compactly Supported Distributions}\label{sec:riesz upper bd}
Note that we (obviously) always have $\overline{d_H}(\mu, \nu) \le \TV(\mu,\nu)$ for arbitrary probability measures $\mu$ and $\nu$. Moreover, for distributions supported on a unit ball we also have $d_H(\mu, \nu) \lesssim
\overline{d_H}(\mu,\nu)$. Therefore, by the identification of $d_H$
and $\cal E_1$ (\Cref{prop:dh_energy}), we can see that
for distributions with bounded support, we always have 
$ \cal E_1(\mu,\nu) \lesssim \TV(\mu,\nu)\,.$
The next result generalizes this estimate for all $\cal E_\gamma$, not just $\gamma=1$.

\begin{proposition} \label{prop:HLS}
For any dimension $d\geq1$ and $\gamma \in (0,2)$ there exists a finite constant $c = c(d, \gamma)$ such that for any two probability distributions $\mu,\nu$ supported on the unit ball we have
	$$ \cal E_\gamma(\mu,\nu) \le c \TV(\mu, \nu)\,.$$
\end{proposition}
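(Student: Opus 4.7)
The plan is to bypass the Hardy--Littlewood--Sobolev machinery suggested by the label and instead exploit the negative-type structure of $\|x-y\|^\gamma$ directly. Setting $\tau \deq \mu - \nu$ and using $\tau(\R^d) = 0$, the MMD representation from Section~\ref{sec:mmd form} with the explicit kernel $k_\gamma(x,y) = \|x\|^\gamma + \|y\|^\gamma - \|x-y\|^\gamma$ simplifies to
\begin{equation*}
    \cal E_\gamma^2(\mu,\nu) \;=\; -\int_{\R^d}\int_{\R^d} \|x-y\|^\gamma\,\D\tau(x)\,\D\tau(y),
\end{equation*}
because the $\|x\|^\gamma$ and $\|y\|^\gamma$ pieces integrate to zero against $\tau\otimes\tau$.

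Next, I would insert the Hahn--Jordan decomposition $\tau = \tau^+ - \tau^-$ (with $\tau^\pm \geq 0$ and $\tau^\pm(\R^d) = \TV(\mu,\nu)$) and expand $\D\tau\otimes\D\tau$ into four pieces. The two ``diagonal'' contributions $-\int\int\|x-y\|^\gamma\,\D\tau^\pm\,\D\tau^\pm$ are non-positive since $\|x-y\|^\gamma \geq 0$ and $\tau^\pm$ are positive measures, so they can simply be dropped when upper bounding $\cal E_\gamma^2$. The remaining cross-term $+2\int\int\|x-y\|^\gamma\,\D\tau^+(x)\,\D\tau^-(y)$ is controlled by the diameter of the support: $\|x-y\| \leq 2$ on the unit ball yields $\|x-y\|^\gamma \leq 2^\gamma$, and hence $\cal E_\gamma^2(\mu,\nu) \leq 2^{\gamma+1}\,\TV(\mu,\nu)^2$. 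The proposition then follows with $c = 2^{(\gamma+1)/2}$.

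There is no real obstacle here; the conceptual point is to recognize that the self-energy terms carry the ``correct'' sign and can be discarded for free, bypassing any need for fractional integration or a genuine HLS estimate. The constraint $\gamma \in (0,\min\{d,2\})$ only enters to ensure the various integral representations of $\cal E_\gamma$ from Section~\ref{sec:equiv formulations} are well defined in the first place. Moreover the linear scaling in $\TV$ is sharp: antipodal point masses $\mu = \delta_{e_1}$, $\nu = \delta_{-e_1}$ give $\TV(\mu,\nu) = 1$ and $\cal E_\gamma^2(\mu,\nu) = 2^{\gamma+1}$, saturating the bound up to the value of the constant.
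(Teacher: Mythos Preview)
Your argument is correct and genuinely simpler than the paper's. The paper expresses $\cal E_\gamma$ via the Riesz potential (Proposition~\ref{prop:riesz}), splits $\|I_s(\mu-\nu)\|_2$ into the pieces inside and outside $\bb B(0,2)$, controls the tail by a direct decay estimate and the bulk by the weak-type Hardy--Littlewood--Sobolev inequality, and then patches in a mollifier argument for singular $\mu,\nu$. You instead use only the identity $\cal E_\gamma^2(\mu,\nu)=-\iint\|x-y\|^\gamma\,\D\tau(x)\D\tau(y)$ together with the Hahn--Jordan decomposition and the trivial bound $\|x-y\|^\gamma\le 2^\gamma$ on the unit ball; the ``diagonal'' self-energies drop out with the right sign and the cross term is bounded by $2^{\gamma+1}\TV^2$. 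This yields an explicit constant $c=2^{(\gamma+1)/2}$, works directly for arbitrary (possibly singular) $\mu,\nu$ without mollification, and in fact covers the full range $\gamma\in(0,2)$ rather than only $\gamma<\min\{d,2\}$: the latter restriction in the paper's statement is an artefact of the Riesz potential route, since Proposition~\ref{prop:riesz} requires $0<s=(d+\gamma)/2<d$. What the paper's approach buys is a connection to classical potential theory that may be of independent interest, but for the bare inequality your proof is both shorter and stronger.
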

\begin{proof}
The proof directly follows from $\|x-y\|^\gamma\leq 2^\gamma$ and so 
$\cal{E}_\gamma^2(\mu,\nu)=\E\Big[2\|X-Y\|^\gamma-\|X-X'\|^\gamma-\|Y-Y'\|^\gamma\Big]=\int -\|x-y\|^\gamma (\d\mu(x)-\d\nu(x))(\d\mu(y)-\d\nu(y))\leq 2^\gamma \TV(\mu,\nu)^2$, where $(X,X',Y,Y') \sim \mu^{\otimes 2}\otimes\nu^{\otimes 2}.$ \end{proof}

\subsection{Lower Bound --- Smooth Distributions And Gaussian Mixtures}\label{sec:TV energy comparison absolutely cont}
In \Cref{sec:riesz upper bd} we showed that the energy distance is upper bounded by total variation for compactly supported measures. In this section we look at the reverse direction, namely, we aim to lower bound the energy distance by total variation.

\begin{theorem} \label{prop:d_a comparison}
    For any $\beta > 0$, $d\geq1$, there exists a finite constant $C_1 = C_1(\beta, d)$ so that  
        \begin{equation}\label{eqn:smooth comparison}
           \sqrt{\gamma(2-\gamma)}\TV(\mu,\nu)^{\frac{2\beta+d+\gamma}{2\beta}} \leq C_1 \cdot (\|\mu\|^2_{\beta, 2}+\|\nu\|^2_{\beta, 2})^{\frac{d+\gamma}{4\beta}}\cdot\cal E_\gamma(\mu,\nu)
        \end{equation}
        for any $\mu,\nu \in \cal P_S(\beta,d, \infty)$ and $\gamma \in (0,2)$. Similarly, for any $d\geq1$ there exists a finite constant $C_2 = C_2(d)$ such that 
        \begin{equation}\label{eqn:gauss comparison}
            \frac{\sqrt{\gamma(2-\gamma)}\TV(\mu,\nu)}{\log(3+1/\TV(\mu,\nu))^{\frac{2d+\gamma}{4}}} \leq C_2 \cal E_\gamma(\mu,\nu)
        \end{equation}
        for every $\mu,\nu\in\cal P_G(d)$ and $\gamma\in(0,2)$.
\end{theorem}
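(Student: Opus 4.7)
The plan is Fourier-analytic, using the Fourier form (\Cref{prop:norm equiv form}) as the engine. The key dichotomy is that a small $\cal E_\gamma$ forces $\widehat{\mu-\nu}$ to be small on $\{\|\omega\| \le R\}$ (after paying a factor $R^{d+\gamma}$ from the weight $\|\omega\|^{-d-\gamma}$), while the regularity of the class forces $\widehat{\mu-\nu}$ to decay on $\{\|\omega\| > R\}$. Combining these through Plancherel gives an $L^2$ bound on $\mu-\nu$, which I then convert into a $\TV$ bound via Cauchy--Schwarz on a compact set.

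For \eqref{eqn:smooth comparison}, set $h = p - q$ where $p,q$ are the densities of $\mu,\nu \in \cal P_S(\beta,d,C)$. Then $\supp(h) \subseteq \bb{B}(0,1)$ and $\|h\|_{\beta,2} \le 2C$ by the triangle inequality. The Plancherel split at radius $R$ yields
\begin{equation*}
    \|h\|_2^2 \;\lesssim\; \int_{\|\omega\| \le R} |\widehat h(\omega)|^2 \D\omega + \int_{\|\omega\| > R} |\widehat h(\omega)|^2 \D\omega \;\lesssim\; \frac{R^{d+\gamma}}{F_\gamma(d)}\cal E_\gamma^2 + R^{-2\beta}\|h\|_{\beta,2}^2,
\end{equation*}
using the weight-insertion trick $|\widehat h|^2 \le R^{d+\gamma}|\widehat h|^2/\|\omega\|^{d+\gamma}$ for low frequencies and the Sobolev norm for high frequencies. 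Optimizing $R^{2\beta+d+\gamma} \asymp F_\gamma(d)C^2/\cal E_\gamma^2$ and applying Cauchy--Schwarz $\TV = \tfrac12\|h\|_1 \le \tfrac12 \vol(\bb{B}(0,1))^{1/2}\|h\|_2$ gives $\TV^{(2\beta+d+\gamma)/(2\beta)} \lesssim \cal E_\gamma/\sqrt{F_\gamma(d)}$. Since $F_\gamma(d) \asymp \gamma(2-\gamma)\Gamma((d+\gamma)/2)\pi^{-d/2}$, the $\sqrt{\gamma(2-\gamma)}$ factor in \eqref{eqn:smooth comparison} falls out with the remaining $d,\gamma$-dependent constants absorbed into $C_1$.

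For \eqref{eqn:gauss comparison}, decompose $\mu - \nu = \tau * \phi$ where $\phi$ is the density of $\cal N(0,I_d)$ and $\tau$ is a signed measure on $\bb{B}(0,1)$ with total mass zero and $\|\tau\|_{\TV}\le 2$, so that $\widehat{\mu-\nu}(\omega) = \widehat\tau(\omega)\, e^{-\|\omega\|^2/2}$ and $\|\widehat\tau\|_\infty\le 2$. The Plancherel split at cutoff $M$ now gives low-frequency contribution $\lesssim M^{d+\gamma}\cal E_\gamma^2/F_\gamma(d)$ as above, and high-frequency contribution $\lesssim \int_{\|\omega\|>M} e^{-\|\omega\|^2}\D\omega \lesssim e^{-M^2/2}$ (up to polynomial factors). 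Choosing $M^2 \asymp \log(1/\cal E_\gamma^2)$ balances the two and yields $\|\mu-\nu\|_2^2 \lesssim \log(1/\cal E_\gamma)^{(d+\gamma)/2}\,\cal E_\gamma^2/F_\gamma(d)$. To convert from $L^2$ to $\TV$, I truncate at radius $R \asymp \sqrt{\log(1/\cal E_\gamma)}$: on $\bb{B}(0,R)$ Cauchy--Schwarz gives $\|(\mu-\nu)\bbm{1}_{\bb{B}(0,R)}\|_1 \lesssim R^{d/2}\|\mu-\nu\|_2$, while the tails $\mu(\bb{B}(0,R)^c)$ and $\nu(\bb{B}(0,R)^c)$ decay as $e^{-\Omega(R^2)}$ since all Gaussian components are centered in $\bb{B}(0,1)$. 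This produces $\TV \lesssim \log(1/\cal E_\gamma)^{(2d+\gamma)/4}\,\cal E_\gamma/\sqrt{\gamma(2-\gamma)}$. Finally, since \Cref{prop:HLS} gives $\cal E_\gamma \lesssim \TV$, a standard rearrangement shows $\log(1/\cal E_\gamma) = O(\log(3 + 1/\TV))$, allowing me to rewrite the log factor as in \eqref{eqn:gauss comparison}.

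The main obstacle I foresee is careful bookkeeping: tracking the $\sqrt{\gamma(2-\gamma)}$ factor (coming from $F_\gamma(d)^{-1/2}$) cleanly through the optimization and not losing it in the log-conversion step, and confirming the tail bound $\mu(\bb{B}(0,R)^c)\lesssim e^{-\Omega(R^2)}$ uniformly over $\cal P_G(d)$. The rest is routine Fourier-analytic estimation.
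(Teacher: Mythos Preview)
Your frequency-cutoff approach is sound and gives the same exponents as the paper, which instead proceeds via H\"older's inequality: for \eqref{eqn:smooth comparison} it interpolates $\|\widehat{\mu-\nu}\|_2^2$ directly between $\|\mu-\nu\|_{\beta,2}^2$ and $\cal E_\gamma^2/F_\gamma(d)$ with conjugate exponent $r^*=(2\beta+d+\gamma)/(2\beta)$; for \eqref{eqn:gauss comparison} it reuses the same H\"older step with $\beta$ a free parameter, bounds the $(\beta,2)$-Sobolev seminorm of the Gaussian via \Cref{lem: Dbeta of GMM}, and then optimizes $\beta\asymp\log(3+1/\|f\|_2)$. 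Your route is a touch more elementary in the Gaussian case (no Sobolev-norm computation, no optimization over $\beta$); the paper's has the virtue of unifying both parts under one interpolation step.

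Your final log-conversion step needs repair on two counts. First, the appeal to \Cref{prop:HLS} is invalid: distributions in $\cal P_G(d)$ are not compactly supported, and in any case $\cal E_\gamma\lesssim\TV$ would yield $\log(1/\cal E_\gamma)\gtrsim\log(1/\TV)$, the opposite of the inequality you need. Second, because $C_2$ must be uniform in $\gamma\in(0,2)$ and $\cal E_\gamma$ carries a built-in factor $\sqrt{F_\gamma(d)}\asymp\sqrt{\gamma(2-\gamma)}$, your choice $M^2\asymp\log(1/\cal E_\gamma^2)$ and the resulting $\log(1/\cal E_\gamma)$ factors introduce a stray $\log(1/(\gamma(2-\gamma)))$ that spoils uniformity as $\gamma\to0$ or $\gamma\to2$. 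The fix is to set $M^2\asymp\log\big(F_\gamma(d)/\cal E_\gamma^2\big)$ (and likewise for $R$) so that every logarithm is in terms of the $\gamma$-free quantity $\cal E_\gamma^2/F_\gamma(d)$, and then invert directly from the crude consequence $\TV\lesssim(\cal E_\gamma^2/F_\gamma(d))^{1/4}$. The paper sidesteps the whole issue by phrasing all intermediate bounds in terms of $\|f\|_2$ and inverting via \Cref{lem:log inverse}.
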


\begin{proof}
    Abusing notation, identify $\mu$ and $\nu$ with their Lebesgue densities.
    The argument proceeds trough a chain of inequalities:
    \begin{enumerate}
        \item Bound $\TV$ by the $L^2$ distance between densities. 
        \item Apply Parseval's Theorem to pass to Fourier space. 
        \item Apply H\"older's inequality with well-chosen exponents. 
    \end{enumerate}
    \textbf{Proof of \eqref{eqn:smooth comparison}}. Jensen's inequality implies that $$2\TV(\mu,\nu) = \|\mu-\nu\|_1 \leq \sqrt{\vol(\bb B(0,1))}\|\mu-\nu\|_2 \lesssim \|\mu-\nu\|_2,$$where $\vol$ denotes volume and we discard dimension-dependent constants. This completes the first step of our proof. For the second step note that $\mu,\nu \in L^2(\R^d)$ and we may apply Parseval's theorem to obtain
\begin{equation*}
    \|\mu-\nu\|_2^2 = \frac{1}{(2\pi)^d}\|\widehat \mu-\widehat \nu\|_2^2. 
\end{equation*}
For arbitrary $\varphi > 0$ and $r\in[1,\infty]$, H\"older's inequality with exponents $\frac1r+\frac{1}{r^*}=1$ implies that 
\begin{equation}\label{eq:holder}
\begin{aligned}
    \|\widehat \mu-\widehat \nu\|_2^2 &= \int_{\R^d} |\widehat\mu(\w)-\widehat\mu(\w)|^2 \frac{\|\w\|^\varphi}{\|\w\|^\varphi} \D\w \\
    &\leq \left(\int_{\R^d} |\widehat \mu(\w)-\widehat \nu(\w)|^2 \|\w\|^{\varphi r}\D\w\right)^{1/r} \left(\int_{\R^d} \frac{|\widehat \mu(\w)-\widehat \nu(\w)|^2}{\|\omega\|^{\varphi r^*}} \D\w\right)^{1/r^*}.
\end{aligned}
\end{equation}
Now, we choose $\varphi$ and $r$ to satisfy
\begin{align*}
    \varphi r &= 2\beta \\
    \varphi r^* &= d+\gamma.
\end{align*}
The first equation ensures that the first integral term is bounded by $\|\mu-\nu\|_{\beta,2}^{2/r}$, which is assumed to be at most a $d,\beta$ dependent constant. The second equation ensures that the second integral term is equal to $(\cal E_\gamma(\mu,\nu)^2 / F_\gamma(d))^{1/r^*}$ by \Cref{prop:norm equiv form}. The solution to this system of equations is given by $r^*=(2\beta+d+\gamma)/(2\beta)$ and $\varphi = 2\beta\cdot \frac{d+\gamma}{2\beta+d+\gamma}$. Note that clearly $\varphi > 0$ and $r^* \geq 1$. Thus, after rearrangement and using that $F_d(\gamma)=\Theta(\gamma(2-\gamma))$ up to a dimension dependent constant, we obtain
\begin{align*}
    \sqrt{\gamma(2-\gamma)}\|\widehat \mu-\widehat \nu\|_2^{\frac{2\beta+d+\gamma}{2\beta}} \leq C_1 \cdot (\|\mu\|^2_{\beta, 2}+\|\nu\|^2_{\beta, 2})^{\frac{d+\gamma}{4\beta}}\cdot \cal E_\gamma(\mu,\nu),  
\end{align*}
for a finite constant $C_1 = C_1(d,\beta)$, concluding the proof. 

\textbf{Proof of \eqref{eqn:gauss comparison}}. We write $C(d) \in (0,\infty)$ for a dimension dependent constant that may change from line to line. The outline of the argument is analogous to the above, with the additional step of having to bound the $(\beta,2)$-Sobolev norm of the Gaussian density as $\beta \to \infty$ for which we rely on \Cref{lem: Dbeta of GMM}. Let $\mu$ and $\nu$ have densities $p*\phi$ and $q*\phi$, where $\phi$ is the density of $\cal N(0,I_d)$. Writing $f=(p-q)$, we can extend 
the proof of \cite[Theorem 22]{jia2023entropic} to multiple dimensions to find, for any $R > 2$, that 
    \begin{align*}
        2\TV(\mu,\nu) = \|\mu-\nu\|_1 &= \int_{\|x\| \leq R}\left| (f * \phi)(x)\right| \D x + \int_{\|x|| > R}\left|\int_{\R^d} \phi(x-y)\D f(y)\right| \D x \\
        &\leq \sqrt{\vol_d(\bb B(0,R))} \sqrt{\int_{\|x\|\leq R} |(f*\phi)(x)|^2\D x} + \int_{\|x\|> R}\exp(-\|x\|^2/8)\D x \\
        &\leq C(d)\Big(R^{d/2} \|\mu-\nu\|_2 + \exp(-\Omega(R^2))\Big), 
    \end{align*}
    where the second line uses that $\supp(f)\subseteq\bb B(0,1)$. Taking $R \asymp \sqrt{\log(3 + 1/\|\mu-\nu\|_2)}$ we obtain the inequality
    \begin{equation}\label{eqn:gauss tv L2}
        \TV(\mu,\nu) \leq C(d)\|\mu-\nu\|_2\log(3+1/\|\mu-\nu\|_2)^{d/4}. 
    \end{equation}
    By H\"older's inequality we obtain
\begin{align*}
    \|\widehat f\|_2 &\leq \|\|\omega\|^{\beta}\widehat f(\omega)\|_2^{\frac{d+\gamma}{2\beta+d+\gamma}} \left\|\frac{\widehat f(\omega)}{\|\omega\|^{\frac{d+\gamma}{2}}}\right\|_2^{\frac{2\beta}{2\beta+d+\gamma}}
    \\ 
    &= \|\|\omega\|^{\beta}\widehat f(\omega)\|_2^{^{\frac{d+\gamma}{2\beta+d+\gamma}}}\cdot \cal E_\gamma(\mu, \nu)^{{\frac{2\beta}{2\beta+d+\gamma}}} \cdot F_\gamma(d)^{-\frac{\beta}{2\beta+d+\gamma}}
\end{align*}
by \Cref{prop:norm equiv form}. Using that $|\widehat f| \leq |\widehat \phi|$ and applying \Cref{lem: Dbeta of GMM}, for $\beta \geq 1$ we get
\begin{align*}
    F_\gamma(d)^{\frac{\beta}{2\beta+d+\gamma}}\|\widehat f\|_2 \leq \cal E_\gamma(\mu, \nu)^{{\frac{2\beta}{2\beta+d+\gamma}}} \left(\frac{5\pi^{d/2}}{\Gamma(d/2)} \left(\frac{2\beta+d}{2e}\right)^{\frac{2\beta+d-1}{2}}\right)^{\frac{d+\gamma}{2(2\beta+d+\gamma)}}. 
\end{align*}
Rearranging and using Parseval's Theorem, we get
\begin{align*}
    \cal E_\gamma(\mu,\nu) \geq C(d) \sqrt{\gamma(2-\gamma)} \|f\|_2 \frac{\|f\|_2^{\frac{d+\gamma}{2\beta}}}{\left(\frac{2\beta+d}{2e}\right)^{\frac{(d+\gamma)(2\beta+d-1)}{8\beta}}}
\end{align*}
for some $d$-dependent, albeit exponential, constant $C(d)>0$. Plugging in $\beta = \log(3+1/\|f\|_2)$ and assuming that $\|f\|_2$ is small enough in terms of $d$, we obtain
\begin{align}\label{eqn:E_g vs L2}
    \cal E_\gamma(\mu,\nu) &\geq \frac{C(d)\sqrt{\gamma(2-\gamma)}\|f\|_2}{\log(3+1/\|f\|_2)^{\frac{d+\gamma}{4}}} \geq \frac{C(d)\sqrt{\gamma(2-\gamma)}\TV(\mu,\nu)}{\log(3+1/\TV(\mu,\nu))^{\frac{2d+\gamma}{4}}},
\end{align}
where the second inequality uses \eqref{eqn:gauss tv L2} and \Cref{lem:log inverse}.
\end{proof}

\Cref{prop:d_a comparison} is our main technical result, which shows that $\cal E_\gamma$ is lower bounded by a polynomial of the total variation distance for both the smooth distribution class $\cal P_S$ and Gaussian mixtures $\cal P_G$. Note also that in one dimension, \eqref{eqn:smooth comparison} follows from the Gagliardo–Nirenberg-Sobolev interpolation inequality. However, to our knowledge, the inequality is new for $d>1$. As for the tightness of \Cref{prop:d_a comparison}, we manage to prove that this inequality is the best possible for $\cal P_S$ in one dimension, and best possible up to a poly-logarithmic factor in dimension $2$ and above. 

\begin{proposition}\label{thm:tightness}
For any
$\beta > 0$, $d\geq1$, $\gamma \in (0,2)$ and $C>0$ satisfying \Cref{assumption:C}, there exists a finite constant $C_1 = C_1(\beta, d, \gamma, C)$ so that for any value of $\epsilon \in (0,1)$, there exist $\mu_\epsilon,\nu_\epsilon \in \cal P_S(\beta,d, C)$ such that $  \TV(\mu_\epsilon,\nu_\epsilon)/\epsilon \in (1/C_1,C_1)$ and 
        \begin{equation*}
            \cal E_\gamma(\mu_\epsilon, \nu_\epsilon)  \leq C_1 \TV(\mu_\epsilon,\nu_\epsilon)^{\frac{2\beta+d+\gamma}{2\beta}} \log\left(3 + \frac{1}{\TV(\mu_\epsilon,\nu_\epsilon)}\right)^{d-1}.
        \end{equation*}
\end{proposition}

In the special case $\gamma=1$ we obtain an even stronger notion of tightness. 

\begin{proposition}\label{prop:d_H tightness}
When $\gamma=1$ we may replace $\cal E_1$ by $\overline{d_H}$ in \Cref{thm:tightness}.
\end{proposition}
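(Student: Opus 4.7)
The plan is to give an explicit wavelet-type construction of $\mu_\epsilon, \nu_\epsilon$ and directly estimate $\overline{d_H}$ on it. Fix a reference density $\rho \in \cal P_S(\beta, d, C/2)$ supported in the interior of $\bb B(0,1)$ (existence guaranteed by the standing assumption), and a smooth bump $\phi \in C_c^\infty([0,1]^d)$ with $\int \phi = 0$. For a bandwidth $h > 0$ define the disjointly supported copies $\phi_{h,k}(x) = \phi((x - hk)/h)$ for $k$ in an index set $K \subseteq \Z^d$ of size $N \asymp h^{-d}$ packed inside $\supp \rho$, and set
\begin{equation*}
    g_a = \sum_{k \in K} a_k \phi_{h,k}, \qquad \mu_\epsilon = \rho + \eta g_a, \qquad \nu_\epsilon = \rho,
\end{equation*}
where $a \in \{-1, +1\}^N$ is a sign pattern to be chosen and $\eta > 0$ is a small amplitude. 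Since $\int g_a = 0$, both candidates are probability measures as soon as $\eta \|g_a\|_\infty \leq \min_{\supp g_a} \rho$.

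First I would verify the Sobolev and TV constraints. The scaling $\|\phi_{h,k}\|_{\beta,2}^2 = h^{d-2\beta}\|\phi\|_{\beta,2}^2$ combined with the essential orthogonality of the $\phi_{h,k}$ in Sobolev norm (they have disjoint supports and hence overlap only at high frequencies) gives $\|g_a\|_{\beta,2}^2 \asymp N h^{d-2\beta} \asymp h^{-2\beta}$, so choosing $\eta = c_1 h^\beta$ for a small enough $c_1$ ensures $\|\mu_\epsilon\|_{\beta,2} \leq C$ via the triangle inequality. Meanwhile $2\TV(\mu_\epsilon, \nu_\epsilon) = \eta \|g_a\|_1 = \eta N h^d \|\phi\|_1 \asymp \eta \asymp h^\beta$ by disjointness, so the choice $h \asymp \epsilon^{1/\beta}$ gives $\TV(\mu_\epsilon,\nu_\epsilon) \asymp \epsilon$, as required.

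The heart of the proof is the upper bound on $\overline{d_H}(\mu_\epsilon, \nu_\epsilon) = \eta \sup_{v,b} \lvert\int_{\langle v, x\rangle \geq b} g_a\rvert$. The crucial observation is that bumps $\phi_{h,k}$ whose supports lie entirely on one side of the boundary hyperplane $\{\langle v, x\rangle = b\}$ contribute $\pm\int \phi = 0$; only the $\lesssim h^{-(d-1)}$ bumps \emph{sliced} by the hyperplane contribute, each by at most $\asymp h^d$. When $d = 1$, the running sum $\int_{-\infty}^t g_a$ returns to zero at every bump boundary and is $\lesssim h$ in between, yielding the deterministic estimate $\overline{d_H} \lesssim \eta h \asymp \epsilon^{(2\beta+2)/(2\beta)}$, with no log factor. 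For $d \geq 2$, I would take $a_k$ i.i.d.\ Rademacher and apply Hoeffding's inequality for each fixed halfspace, giving a fluctuation of order $h^d \sqrt{h^{-(d-1)} \log(1/\delta)} = h^{(d+1)/2}\sqrt{\log(1/\delta)}$; then a standard $\epsilon$-net argument on halfspaces (whose VC dimension is $d+1$) combined with a Lipschitz-in-$(v,b)$ estimate for $\int_{H_{v,b}} g_a$ permits a union bound over $\poly(1/h)$ representatives, yielding $\overline{d_H}(\mu_\epsilon, \nu_\epsilon) \lesssim \eta h^{(d+1)/2} \log(1/h)^{O(1)} \asymp \epsilon^{(2\beta+d+1)/(2\beta)} \log(1/\epsilon)^{O(1)}$, matching~\eqref{eq:compare_tight}. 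The main technical obstacle is the uniform concentration: the discretization of halfspaces must be fine enough that the net approximation error is dominated by the Hoeffding fluctuation, yet coarse enough that $\log(\#\text{net})$ remains $\asymp \log(1/\epsilon)$ with a controlled exponent.
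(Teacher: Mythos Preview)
Your approach is correct and genuinely different from the paper's. The paper's construction is Fourier-analytic: in $d=1$ it takes $f(x)=\one\{|x|\le\pi\}\sin(rx)$ self-convolved $\lceil\beta\rceil$ times and bounds $\overline{d_H}(f,0)$ via Plancherel against the kernel $1/\omega$; in $d\ge2$ it sets $f=gh$ where $\widehat g$ is the surface measure on a sphere of radius $r$ and $h$ is a bump whose Fourier transform decays like $\exp(-\|\omega\|/\log^2\|\omega\|)$ (borrowed from a recent construction of Cohen), and the $\overline{d_H}$ bound is again a direct Fourier computation splitting into a thin shell around the sphere plus a negligible tail. Your wavelet-plus-random-signs route avoids this machinery entirely: the cancellation $\int\phi=0$ kills all unsliced bumps deterministically, and Hoeffding over the $\asymp h^{-(d-1)}$ sliced bumps together with a net on halfspaces controls the supremum. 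This is more elementary and makes the geometric mechanism (only bumps cut by the hyperplane contribute) transparent; the paper's route is deterministic throughout and connects directly to the H\"older equality case driving the upper bound.

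One caution: your ``essential orthogonality'' of the $\phi_{h,k}$ in the $(\beta,2)$-seminorm is immediate only for integer $\beta$ (disjoint supports of derivatives). For fractional $\beta$ the reasoning you give --- disjoint spatial supports implying Fourier-side near-orthogonality --- is not right as stated. The fix is cheap: take expectation over the Rademacher signs to get $\E_a\|g_a\|_{\beta,2}^2=\sum_k\|\phi_{h,k}\|_{\beta,2}^2\asymp h^{-2\beta}$ exactly, then Markov delivers a good $a$ with probability at least $1/2$, which you intersect with your high-probability $\overline{d_H}$ event to exhibit a single sign pattern satisfying both constraints.
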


\Cref{prop:d_H tightness} is an improvement over \Cref{thm:tightness} due to the inequality $d_H
\lesssim \overline{d_H}$ over the class $\cal P_S(\beta,d, C)$, which follows from \Cref{prop:d_H
< d_H}. It shows also that our construction has the property that there does not exist any
halfspace that separates $\mu$ and $\nu$ better than our bounds suggest. 

The proofs of both results are presented in \Cref{sec: lowers}. The general idea is to saturate H\"older's inequality in~\eqref{eq:holder}, for which the Fourier transform of $f =
\frac{\D\nu}{\D x} - \frac{\D\mu}{\D x}$ should be supported on a sphere. However, such $f$
clearly cannot be compactly supported. Thus the actual
construction is to multiply the Fourier inverse of the uniform measure on a sphere with a compactly
supported mollifier. In $d>1$ the mollifier that we require must have super-polynomial Fourier
spectrum decay, for which we use the recent construction in~\cite{cohen2023fractal}.

\subsection{Lower Bound --- Discrete Distributions}\label{sec:TV energy discrete}

Suppose we have two discrete distributions that are supported on a common, finite set of size $k$. One way to measure the energy distance between them would be to identify their support with the set $\{1,2,\dots,k\}$, thereby embedding the two distributions in $\R$, and applying the one-dimensional energy distance. 

While the above approach seems reasonable, it is entirely arbitrary. Indeed, there might not be a natural ordering of the support; moreover, why should one choose the integers between $1$ and $k$ instead of, say, the set $\{1,2,4,\dots,2^k\}$? The total variation distance does not suffer from such ambiguities, and it is unclear how our choice of embedding affects the relationship to $\TV$. The following result attacks precisely this question. 

\begin{theorem}\label{prop:discrete comparison}
    Let $\mu$ and $\nu$ be probability distributions supported on the set $\{x_1,\dots,x_k\}\subseteq \R^d$ and let $\delta=\min_{i\neq j}\|x_i-x_j\|$. Then there exists a universal constant $C>0$ such that 
    \begin{equation*}
        \cal E_1^2(\mu,\nu) \geq \frac{C\delta}{k\sqrt d}\TV^2(\mu,\nu). 
    \end{equation*}
\end{theorem}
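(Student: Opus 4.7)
The plan is to reduce the $d$-dimensional problem to a one-dimensional one via the sliced representation of $\cal E_1$ supplied by \Cref{prop:slice equiv form} at $\gamma=1$ (equivalently, via the average halfspace discrepancy $d_H$ and \Cref{prop:dh_energy}). Writing $a_i=\mu(\{x_i\})-\nu(\{x_i\})$, so that $\sum_i a_i=0$ and $\sum_i |a_i|=2\TV(\mu,\nu)$, this rewrites $\cal E_1^2(\mu,\nu)$ up to an explicit dimensional constant as
\[
\int_{\bb S^{d-1}}\int_\R \Bigl(\sum_i a_i \,\one\{\langle x_i,v\rangle\ge b\}\Bigr)^2\,\d b\,\d\sigma(v).
\]

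For each $v$ the inner integrand is a piecewise constant function of $b$ with jumps $a_i$ at the projections $y_i(v):=\langle x_i,v\rangle$. Letting $G(i,v):=\min_{\ell\ne i}|y_i(v)-y_\ell(v)|$, the intervals $I_i(v):=[y_i(v)-G(i,v)/3,\,y_i(v)+G(i,v)/3]$ are pairwise disjoint across $i$, and on each $I_i(v)$ the step function takes only two constant values whose difference equals $a_i$. Applying $u^2+w^2\ge (u-w)^2/2$ separately on each $I_i(v)$ then yields
\[
\int_\R \Bigl(\sum_i a_i\,\one\{\langle x_i,v\rangle\ge b\}\Bigr)^2\,\d b\ \ge\ \tfrac{1}{6}\sum_i G(i,v)\,a_i^2.
\]

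To finish, I would average this bound over $v$ and combine with Cauchy--Schwarz. The key geometric ingredient is a one-dimensional anti-concentration estimate: for any unit vector $u\in\R^d$, the density of $\langle u,v\rangle$ under $v\sim\unif(\bb S^{d-1})$ is bounded by $O(\sqrt d)$, so a union bound over $\ell\neq i$ gives $\P_v\{G(i,v)\le t\}\lesssim kt\sqrt d/\delta$, hence $\E_v G(i,v)\gtrsim \delta/(k\sqrt d)$. Combining this with $\sum_i a_i^2\ge \|a\|_1^2/k=4\TV(\mu,\nu)^2/k$ and carefully accounting for the dimensional constants of \Cref{prop:slice equiv form} (the $\sqrt d$ factor from the anti-concentration estimate combines naturally with the ratio $\Gamma((d+1)/2)/\Gamma(d/2)\asymp \sqrt d$ appearing in $\vol_{d-1}(\bb S^{d-1})/S_1$) then produces the desired bound. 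The main obstacle I expect is that for certain configurations and adversarial directions $v$ the local gaps $G(i,v)$ can collapse, so obtaining the sharp $k$-dependence calls for not simultaneously paying a factor of $k$ in both the union bound and the Cauchy--Schwarz step; this is most naturally handled by splitting indices into heavy ($|a_i|\gtrsim \TV/k$) and light ones and applying a tailored estimate in each case.
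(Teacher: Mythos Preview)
Your slicing argument is sound up through the pointwise bound
\[
\int_\R F_v(b)^2\,\d b\ \ge\ \tfrac16\sum_i G(i,v)\,a_i^2
\]
and the anti-concentration estimate $\E_v G(i,v)\gtrsim \delta/(k\sqrt d)$. After integrating over the sphere and using $\vol_{d-1}(\bb S^{d-1})/S_1\asymp\sqrt d$, what you actually obtain is
\[
\cal E_1^2(\mu,\nu)\ \gtrsim\ \frac{\delta}{k}\,\|a\|_2^2\ \ge\ \frac{\delta}{k^2}\,\TV^2(\mu,\nu),
\]
with the two $\sqrt d$ factors cancelling exactly as you anticipate. This is a correct inequality, but it is \emph{not} the one stated: the theorem asks for $\cal E_1^2\gtrsim \frac{\delta}{k\sqrt d}\TV^2$, and your bound is strictly weaker whenever $k>\sqrt d$ (and strictly stronger when $k<\sqrt d$; the two are incomparable).

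The heavy/light splitting you propose cannot close this gap. The factor of $k$ in the union bound for $G(i,v)$ comes from the $k-1$ \emph{geometric} neighbours of $x_i$, which are present regardless of whether their coefficients $a_\ell$ are large or small; a light point can still collapse the local gap around a heavy one. In particular, in the flat case $|a_i|\asymp\TV/k$ for all $i$ (so every index is ``heavy'') there is nothing to split, and your argument still yields $\delta/k^2$ while the theorem demands $\delta/(k\sqrt d)$.

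The paper's proof takes a completely different route: it invokes Ball's eigenvalue bound for Euclidean distance matrices \cite{ball1992eigenvalues}, which gives directly the $\ell_2$-level inequality
\[
\cal E_1^2(\mu,\nu)\ =\ -\sum_{i,j}a_ia_j\|x_i-x_j\|\ \ge\ \frac{C\delta}{\sqrt d}\,\|a\|_2^2,
\]
with no dependence on $k$ at this stage; the single factor of $k$ then enters only through Cauchy--Schwarz $\|a\|_2^2\ge\|a\|_1^2/k$. Your slicing argument replaces $\sqrt d$ by $k$ in this $\ell_2$ inequality, which is precisely what Ball's theorem avoids. So the missing idea is a genuine spectral input about distance matrices that an elementary slice-by-slice analysis does not recover.
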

\begin{proof}
    Let $\mu = \sum_{i=1}^k \mu_i\delta_{x_i}$ and $\nu = \sum_{i=1}^k\nu_i\delta_{x_i}$. Then, by \cite[Theorem 1]{ball1992eigenvalues} we have
    \begin{align*}
        \cal E^2_1(\mu,\nu) &= -\sum_{i,j}(\mu_i-\nu_i)(\mu_j-\nu_j)\|x_i-x_j\| \geq \frac{C\delta}{\sqrt d} \sum_{i=1}^k (\mu_i-\nu_i)^2 \geq \frac{C\delta\TV^2(\mu,\nu)}{k\sqrt d}.
    \end{align*}
    \end{proof}

\begin{remark}
    Similar results can be proved for the generalized energy distance $\cal E_\gamma$, using e.g. the work \cite{narcowich1992norm}. However, to the best of our knowledge, these estimates degrade significantly in the dimension $d$ in contrast with \cite{ball1992eigenvalues}. 
\end{remark}

Notice that by our discussion above, the support set $\{x_1,\dots,x_k\}$  in \Cref{prop:discrete comparison} is arbitrary and may be chosen by us. Since the scale of the supporting points $x_1,\dots,x_k$ is statistically irrelevant, we remove this ambiguity by restricting the points to lie in the unit ball, i.e. requiring that $\max_i\|x_i\|\leq1$. We see now that the comparison between $\cal E_1 $ and $\TV$ \emph{improves} as $\delta/\sqrt d$ grows. Given a fixed value of $\delta$, we want to make the dimension $d$ of our embedding as low as possible, which means that the points $x_1,\dots,x_k$ should form a large $\delta$-packing of the $d$-dimensional unit ball. Due to well known bounds on the packing number of the Euclidean ball, it follows that the best one can hope for is 
\begin{equation*}
\log(k) \asymp d\log(1/\delta). 
\end{equation*}
Maximizing $\delta/\sqrt d$ subject to this constraint yields the choice $d=\Theta(\log(k))$ and $\delta=\Theta(1)$. This gives us the following corollary.

\begin{corollary}\label{cor:discrete comparison}
    There exists a universal constant $C\in(0,\infty)$ such that for any $k\geq1$ there exists a set of points $x_1,\dots,x_k \in \R^{\lceil{C\log(k)}\rceil}$ with $\max_i\|x_i\|\leq1$ such that 
    \begin{equation*}
        \cal E_1\left(\sum_{i=1}^k\mu_i\delta_{x_i}, \sum_{i=1}^k\nu_i\delta_{x_i}\right) \geq \frac{\TV(\mu,\nu)}{C\sqrt k\sqrt[4]{\log(k)}}
    \end{equation*}
    for any two probability mass functions $\mu=(\mu_1,\dots,\mu_k)$ and $\nu=(\nu_1,\dots,\nu_k)$. 
\end{corollary}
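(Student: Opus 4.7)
The plan is to directly invoke \Cref{prop:discrete comparison} after constructing a suitable $\delta$-packing of the unit ball in dimension $d \asymp \log k$. The corollary amounts to optimizing the bound $\cal E_1^2 \geq \frac{C\delta}{k\sqrt d}\TV^2$ over the pair $(\delta,d)$, subject to the geometric constraint that $k$ points fit inside $\bb B(0,1) \subset \R^d$ with pairwise distance at least $\delta$. The discussion preceding the statement has already identified the optimum as $\delta = \Theta(1)$, $d = \Theta(\log k)$, so only the packing construction and the numerics remain.

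First I would recall the standard greedy/volume packing bound: for any $\delta \in (0,1)$ and $d \geq 1$ there exist at least $(1/\delta)^d$ points in $\bb B(0,1) \subset \R^d$ whose pairwise distances are all at least $\delta$. Equivalently, a $\delta$-packing of size $k$ exists in $\bb B(0,1) \subset \R^d$ as soon as $d \geq \log k / \log(1/\delta)$. This is the one external input needed beyond \Cref{prop:discrete comparison}.

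Next I would fix an absolute constant $\delta \in (0,1)$, for instance $\delta = 1/2$, and set $d = \lceil C_0 \log k \rceil$ with $C_0$ chosen large enough that the packing constraint $d \geq \log k / \log(1/\delta)$ holds. This produces points $x_1, \ldots, x_k \in \bb B(0,1) \subset \R^d$ with $\min_{i\neq j}\|x_i-x_j\| \geq 1/2$. Applying \Cref{prop:discrete comparison} to any two mass functions $\mu,\nu$ supported on these points gives
\begin{equation*}
    \cal E_1^2\!\left(\sum_{i=1}^k \mu_i\delta_{x_i},\, \sum_{i=1}^k \nu_i\delta_{x_i}\right) \;\geq\; \frac{C\delta}{k\sqrt d}\TV(\mu,\nu)^2 \;\geq\; \frac{C_1}{k\sqrt{\log k}}\TV(\mu,\nu)^2
\end{equation*}
for some universal $C_1 > 0$. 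Taking square roots and relabelling the universal constant yields the claimed bound.

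No substantive obstacle is expected; the proof is exactly the optimization sketched in the paragraph preceding the corollary. The only minor points of care are handling small values of $k$ (where $\log k$ may be tiny or zero) by enlarging the final universal constant, and absorbing the rounding $\lceil C_0 \log k \rceil$ into the constant $C$ in the statement.
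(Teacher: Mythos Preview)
Your proof is correct and follows essentially the same approach as the paper: the corollary is obtained by combining \Cref{prop:discrete comparison} with the standard packing bound for the Euclidean ball, choosing $\delta=\Theta(1)$ and $d=\Theta(\log k)$ exactly as in the discussion preceding the statement. The paper additionally remarks that an explicit choice of points is furnished by the codewords of an asymptotically good error-correcting code on $\frac{1}{\sqrt d}\{\pm1\}^d$, but this is supplementary and not needed for the bare existence claim.
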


The question arises how the set of points $x_1,\dots,x_k$ in \Cref{cor:discrete comparison} should
be constructed. One solution is to use an error correcting code (ECC), whereby we take the $x_i$
to be the codewords of an ECC on the scaled hypercube $\frac{1}{\sqrt{d}} \{\pm1\}^d$ for some
dimension $d$ (known as ``blocklength'' in this context). An ECC is \textit{asymptotically good} if the message length $\log(k)$
is linear in the blocklength $d$, that is $d\asymp\log(k)$, and if the minimum Hamming distance
between any two codewords is $\Theta(d)$, which translates precisely into $\delta = \min_{i\neq
j}\|x_i-x_j\|\asymp1$. Many explicit constructions of asymptotically good error correcting codes
exist, see \cite{justesen1972class} for one such example, and random codes are almost surely
good~\citep{barg2002random}. Clearly the better the code is, the
better the constants we obtain in \Cref{cor:discrete comparison}. 

\begin{remark}
    One interesting consequence of \Cref{cor:discrete comparison} and the preceeding discussion is the following: given a categorical feature with $k$ possible values, the perceptron may obtain better performance by identifying each category with the codewords $x_1,\dots,x_k$ of an ECC instead of the standard one-hot encoding. 
\end{remark}

\section{Density Estimation}\label{sec:estimator}

In this section we apply what we've learnt about the generalized energy distance and the perceptron discrepancy in prior sections, and analyze multiple problems related to density estimation.

\subsection{Estimating Smooth Distributions and Gaussian Mixtures}\label{sec:conve_in_L2}

Suppose that $X_1,\dots,X_n \stackrel{iid}{\sim} \nu$ for some probability distribution $\nu$ on $\R^d$. Given a class of ``generator'' distributions $\cal G$ and $\gamma \in (0,2)$, define the minimum-$\cal E_\gamma$ estimator as
\begin{equation}\label{eqn:min d_a est}
    \tilde\nu_\gamma \in \argmin_{\nu'\in\cal G} \cal E_\gamma(\nu',\nu_n), 
\end{equation}
where $\nu_n = \frac1n\sum_{i=1}^n \delta_{X_i}$. Note that $\tilde\nu_\gamma$ does not quite agree with our definition of $\tilde \nu_\gamma$ in \eqref{eqn:intro adversarial}, because the $\gamma=1$ case minimizes the \textit{average} halfspace distance $d_H\asymp\cal E_1$ and not the perceptron discrepancy $\overline{d_H}$. The following two results bound the performance of $\tilde\nu$ as defined in \eqref{eqn:min d_a est}, as an estimator of $\nu$ for the smooth density class $\cal P_S$ as well as the Gaussian mixture class $\cal P_G$. In \Cref{sec:proof of intro} we present the adapation of these to $\overline{d_H}$, thereby proving \Cref{thm:intro}. 

\begin{theorem}\label{cor:d_a density est}
    Let $\tilde\nu_\gamma$ be the estimator defined in \eqref{eqn:min d_a est}. For any $\beta > 0$, $d\geq1$ and $C>0$, there exists a finite constant $C_1 = C_1(\beta, d, C)$ so that  
    \begin{equation}\label{eqn:energy estimation rate smooth}
        \sup_{\nu\in\cal P_S(\beta,d, C)}\E \TV(\tilde\nu_\gamma,\nu) \leq C_1 (n\gamma(2-\gamma))^{-\frac{\beta}{2\beta+d+\gamma}}
    \end{equation}
    holds for $\cal G=\cal P_S(\beta,d, C)$ and any $\gamma \in (0,2)$. Similarly, for any $d\geq1$ there is a finite constant $C_2 = C_2(d)$ such that 
    \begin{equation}\label{eqn:energy estimation rate gauss}
        \sup_{\nu \in \cal P_G(d)} \E \TV(\tilde\nu_\gamma,\nu) \leq C_2 \phi\left((n\gamma(2-\gamma))^{-1/2}\right)
    \end{equation}
    holds for $\cal G=\cal P_G(d)$ and any $\gamma \in (0,2)$, where $\phi(x) = {x}\cdot {\log(3+1/{x})^{\frac{2d+\gamma}{4}}}$. 
\end{theorem}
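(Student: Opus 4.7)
The plan is to combine three ingredients already on the table: the standard ERM triangle-inequality trick, the parametric concentration bound of \Cref{lem:d_a concentration}, and the comparison between $\cal E_\gamma$ and $\TV$ from \Cref{prop:d_a comparison}. Because $\cal E_\gamma$ is a bona fide metric (cf.\ \Cref{prop:norm equiv form}) and $\nu\in\cal G$ in both cases, the definition of $\tilde\nu_\gamma$ gives $\cal E_\gamma(\tilde\nu_\gamma,\nu_n)\leq\cal E_\gamma(\nu,\nu_n)$, whence
\begin{equation*}
    \cal E_\gamma(\tilde\nu_\gamma,\nu)\leq\cal E_\gamma(\tilde\nu_\gamma,\nu_n)+\cal E_\gamma(\nu_n,\nu)\leq 2\cal E_\gamma(\nu,\nu_n).
\end{equation*}
Since any $\nu$ in either $\cal P_S(\beta,d,C)$ or $\cal P_G(d)$ has $M_\gamma(\nu)$ bounded by a constant depending only on $d,\gamma$, \Cref{lem:d_a concentration} gives $\E\cal E_\gamma^2(\tilde\nu_\gamma,\nu)\lesssim 1/n$.

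For the smooth case I would simply invoke \Cref{prop:d_a comparison} to obtain, after rearranging,
\begin{equation*}
    \TV(\tilde\nu_\gamma,\nu) \leq \bigl(C_1 \cal E_\gamma(\tilde\nu_\gamma,\nu)/\sqrt{\gamma(2-\gamma)}\bigr)^{\frac{2\beta}{2\beta+d+\gamma}}.
\end{equation*}
Since the exponent $\tfrac{2\beta}{2\beta+d+\gamma}$ lies in $(0,1)$, Jensen's inequality (in the form $\E X^a\leq(\E X^2)^{a/2}$ valid for $a\in(0,2]$ and $X\geq0$) combined with the concentration bound yields
\begin{equation*}
    \E\TV(\tilde\nu_\gamma,\nu)\lesssim(\gamma(2-\gamma))^{-\frac{\beta}{2\beta+d+\gamma}}\bigl(\E\cal E_\gamma^2(\tilde\nu_\gamma,\nu)\bigr)^{\frac{\beta}{2\beta+d+\gamma}}\lesssim (n\gamma(2-\gamma))^{-\frac{\beta}{2\beta+d+\gamma}},
\end{equation*}
which is exactly \eqref{eqn:energy estimation rate smooth}.

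The Gaussian mixture case is the main obstacle, because the comparison \eqref{eqn:gauss comparison} is \emph{implicit} in $\TV$: one only gets $\TV\leq C_2\cal E_\gamma\log(3+1/\TV)^{(2d+\gamma)/4}/\sqrt{\gamma(2-\gamma)}$. To untangle this, I would argue that the map $t\mapsto t\log(3+1/t)^{-(2d+\gamma)/4}$ is increasing on $(0,1]$, so the implicit inequality forces the explicit bound
\begin{equation*}
    \TV(\tilde\nu_\gamma,\nu)\lesssim \frac{\cal E_\gamma(\tilde\nu_\gamma,\nu)}{\sqrt{\gamma(2-\gamma)}}\log\!\Bigl(3+\tfrac{\sqrt{\gamma(2-\gamma)}}{\cal E_\gamma(\tilde\nu_\gamma,\nu)}\Bigr)^{\!\frac{2d+\gamma}{4}},
\end{equation*}
where the argument of the log is obtained by iterating the implicit inequality once (so that only $\cal E_\gamma$ appears on the right). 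The right-hand side, as a function of $Z=\cal E_\gamma(\tilde\nu_\gamma,\nu)$, is of the form $f(Z)=Z\log(3+c/Z)^{(2d+\gamma)/4}$ which is concave and increasing on small $Z$; hence $\E f(Z)\leq f(\sqrt{\E Z^2})$ by Jensen together with $\E Z\leq \sqrt{\E Z^2}$. Plugging in $\E Z^2\lesssim 1/n$ yields $\E\TV(\tilde\nu_\gamma,\nu)\lesssim\phi(n\gamma(2-\gamma))$, which is \eqref{eqn:energy estimation rate gauss}.

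The only delicate point is the monotonicity/concavity verification needed to invert the implicit inequality and apply Jensen; once that is set up, everything else is essentially algebraic. Throughout, I would absorb all $d$- and $\beta$-dependent prefactors (including the $M_\gamma(\nu)$ bound and the log-log slack from iterating the implicit inequality) into the constants $C_1,C_2$, using that these quantities remain finite on $\cal P_S(\beta,d,C)$ and $\cal P_G(d)$ respectively.
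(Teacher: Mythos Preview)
Your proposal is correct and follows essentially the same approach as the paper: ERM triangle inequality plus \Cref{lem:d_a concentration} to control $\E\cal E_\gamma(\tilde\nu_\gamma,\nu)$, then \Cref{prop:d_a comparison} combined with Jensen's inequality to pass to $\TV$. For the Gaussian case, the paper packages the inversion of the implicit inequality slightly differently---it defines $r(x)=x\sqrt{\gamma(2-\gamma)}/\log(3+1/x)^{(2d+\gamma)/4}$, verifies that $r$ is increasing and convex so that $r^{-1}$ is increasing and concave, applies Jensen directly to $r^{-1}$, and only at the very end invokes \Cref{lem:log inverse} to produce the explicit form $\phi$---whereas you iterate the implicit bound once by hand to obtain the explicit $Z\log(3+c/Z)^{(2d+\gamma)/4}$ form before applying Jensen; but these are the same argument in different clothing.
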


\begin{proof}
    Let us focus on the case $\cal G = \cal P_S(\beta,d, C)$ first and let $t = \frac{2\beta+d+\gamma}{2\beta}$. The inequality $\cal E_\gamma(\tilde\nu_\gamma,\nu_n) \leq \cal E_\gamma(\nu,\nu_n)$ holds almost surely by the definition of $\tilde\nu_\gamma$. Writing $C_1=C_1(\beta,d,C)$ for a finite constant that we relabel freely, the first claim is substantiated by the chain of inequalities
    \begin{align*}
        \E \TV(\tilde\nu_\gamma,\nu) &\,\,\stackrel{\text{Thm. \ref{prop:d_a comparison}}}{\leq} \E \Big[\left(C_1\frac{\cal E_\gamma(\tilde\nu_\gamma,\nu)}{\sqrt{\gamma(2-\gamma)}}\right)^{1/t}\Big] \\
        &\stackrel{\Delta-\text{ineq.}}{\leq}  \E \Big[\left(C_1\frac{\cal E_\gamma(\nu,\nu_n) + \cal E_\gamma(\tilde\nu_\gamma,\nu_n)}{\sqrt{\gamma(2-\gamma)}}\right)^{1/t}\Big] \\
        &\stackrel{\text{Eq. \eqref{eqn:min d_a est}}}{\leq} \E \Big[\left(2C_1\frac{\cal E_\gamma(\nu,\nu_n)}{\sqrt{\gamma(2-\gamma)}}\right)^{1/t}\Big] \\
        &\,\,\stackrel{\text{Jensen's}}{\leq} \left(2C_1\frac{\E \cal E_\gamma(\nu,\nu_n)}{\sqrt{\gamma(2-\gamma)}}\right)^{1/t} \\
        &\,\,\stackrel{\text{Lem. \ref{lem:d_a concentration}}}{\leq} \left(\frac{n\gamma(2-\gamma)}{8C_1^2}\right)^{-1/{2t}}. 
    \end{align*}
    The result for $\cal G = \cal P_G$ follows analogously. Define $r(x) = x \cdot \log\left(3+1/x\right)^{-t}$ where $t = \frac{2d+\gamma}{4}>0$. By direct calculation, one can check that $r$ is strictly increasing and convex on $\R_+$. As a consequence, its inverse $r^{-1}$ is strictly increasing and concave. Let $C_2$ be a $d$-dependent finite constant which we relabel repeatedly. Similarly to the case of smooth distributions covered above, we obtain the chain of inequalities as follows:
    \begin{align*}
        \E\TV(\tilde\nu_\gamma,\nu) &\stackrel{\text{Thm.} \ref{prop:d_a comparison}}{\leq} \E\Big[r^{-1}\left(C_2 \frac{\cal E_\gamma(\tilde\nu_\gamma,\nu)}{\sqrt{\gamma(2-\gamma)}}\right)\Big] \\
        &\stackrel{\text{Jensen's}}{\leq}  r^{-1}\left(C_2 \frac{\E \cal E_\gamma(\tilde\nu_\gamma,\nu)}{\sqrt{\gamma(2-\gamma)}}\right) \\
        &\stackrel{\text{Eqn. \ref{eqn:min d_a est}}}{\leq} r^{-1}\left(2C_2\frac{\E \cal E_\gamma(\nu,\nu_n)}{\sqrt{\gamma(2-\gamma)}}\right) \\
        &\stackrel{\text{Lem.} \ref{lem:d_a concentration}}{\leq} r^{-1}\left(C_2(n\gamma(2-\gamma))^{-1/2}\right). 
    \end{align*}
    The conclusion follows by \Cref{lem:log inverse}. 
\end{proof}

Notice that the rate of estimation of the minimum $\cal E_\gamma$ density estimator improves as $\gamma \downarrow 0$, and in fact seems to approach the optimum. However, simultaneously, the ``effective sample size'' $n\gamma$ shrinks. The best trade-off that we can derive by adaptively setting $\gamma$ to be dependent on $n$ is the following. 
\begin{corollary}\label{prop:optimize_alpha}
For any $\beta > 0$, $d\geq1$ and $C>0$, there exists a finite constant $C_1 = C_1(\beta, d, C)$ such that: for all $n\geq 2$, with $\tilde{\nu}_{\gamma_n} \in \arg\min_{\nu^{\prime} \in \mathcal{G}}\mathcal{E}_{\gamma_n}\left(\nu^{\prime}, \nu_n\right)$ in the setting of \eqref{eqn:min d_a est} where $\gamma_n=\log(n)^{-1}$ and $\cal G=\cal P_S(\beta,d, C)$, one has:
    \begin{equation}
        \sup_{\nu\in\cal P_S(\beta,d, C)}\E \TV(\tilde\nu_{\gamma_n},\nu) \leq C_1 \left(\frac{\log n}{n}\right)^{\frac{\beta}{2\beta+d}}.
    \end{equation}
    Similarly, for any $d\geq1$ there is a finite constant $C_2 = C_2(d)$ such that for all $n\geq 3$, with $\tilde{\nu}_{\gamma_n} \in \arg\min_{\nu^{\prime} \in \mathcal{G}}\mathcal{E}_{\gamma_n} \left(\nu^{\prime}, \nu_n\right)$ where $\gamma_n=\log\log(2n)^{-1}$ and $\cal G=\cal P_G(d)$, one has:
    \begin{equation}
        \sup_{\nu \in \cal P_G(d)} \E \TV(\tilde\nu_\gamma,\nu) \leq C_2 \log(n)^{d/2}\sqrt{\log\log n}/\sqrt n.
    \end{equation}
\end{corollary}

% \begin{corollary}
%     The rate in \eqref{eqn:energy estimation rate smooth} (resp. \eqref{eqn:energy estimation rate gauss}) can be improved to $(\log(n)/n)^{\beta/(2\beta+d)}$ (resp. \\$\log(n)^{d/4}\sqrt{\log\log n}/\sqrt n$) by adaptively setting $\gamma=\log(n)^{-1}$ (resp. $\gamma=\log\log(n)^{-1}$). 
% \end{corollary}

\subsection{Proof of \Cref{thm:intro} and \Cref{thm:intro comparison}}\label{sec:proof of intro}

We already have everything needed to deduce \Cref{thm:intro comparison}. Since it is an exercise in combining results, we simply list the required steps:
\begin{enumerate}
    \item Use \Cref{prop:d_a comparison} to get a comparison between $\TV$ and $\cal E_\gamma$.
    \item Set $\gamma=1$ and use \Cref{prop:dh_energy} to get the equivalence between $\cal E_1$ and $d_H$.
    \item Use \Cref{prop:d_H < d_H} to get a comparison between $d_H$ and $\overline{d_H}$. 
\end{enumerate} 

Turning to the proof of \Cref{thm:intro}, we find that it is completely analogous to the proof of \Cref{cor:d_a density est}, with the only difference being that we can no longer rely on \Cref{lem:d_a concentration} to show that the distance between empirical and population measures decays at the parametric rate, as the latter applies to $\cal E_\gamma$ instead of $\overline{d_H}$. However, the corresponding result for $\overline{d_H}$ is well known. 
\begin{lemma}\label{lem:VC bound}
    Let $\nu$ be a probability distribution on $\R^d$ and $\nu_n = \frac1n\sum_{i=1}^n\delta_{X_i}$ for i.i.d. observations $X_i \sim \nu$. Then, for a finite universal constant $C$, 
    \begin{equation*}
        \E \overline{d_H}(\nu,\nu_n) \leq C\sqrt{\frac{d}{n}}. 
    \end{equation*} 
\end{lemma}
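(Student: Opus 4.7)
The plan is to recognize that $\overline{d_H}(\nu,\nu_n)$ is a one-sided supremum of the empirical process indexed by halfspaces, and to invoke the standard Vapnik--Chervonenkis bound on uniform empirical deviations over VC classes. Concretely, writing $\cal H = \{\{x\in\R^d : \langle x,v\rangle \geq b\} : v\in\R^d,\,b\in\R\}$ for the class of closed halfspaces, I would first observe that
\begin{equation*}
    \overline{d_H}(\nu,\nu_n) = \sup_{A\in\cal H} [\nu_n(A)-\nu(A)] \leq \sup_{A\in\cal H} |\nu_n(A)-\nu(A)|,
\end{equation*}
so it suffices to control the two-sided uniform empirical deviation over $\cal H$.

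Next I would recall the classical fact that $\cal H$ has VC dimension exactly $d+1$; this can be deduced from Radon's theorem, and is a textbook computation. With this in hand, the proof reduces to a standard symmetrization--chaining argument. Specifically, by symmetrization,
\begin{equation*}
    \E \sup_{A\in\cal H} |\nu_n(A)-\nu(A)| \leq 2\, \E\,\Rad_n(\cal H),
\end{equation*}
where $\Rad_n(\cal H) = \E_\epsilon\, \sup_{A\in\cal H}\, \frac{1}{n}\sum_{i=1}^n \epsilon_i \one\{X_i \in A\}$ is the (conditional) Rademacher complexity of the indicator class $\{\one_A : A\in\cal H\}$.

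To bound the Rademacher complexity, I would apply Dudley's entropy integral together with Haussler's bound on the $L^2$-covering number of a VC class of dimension $v$, which gives $N(\eps,\cal H, L^2(\nu_n)) \leq (c/\eps)^{2v}$ for a universal constant $c$. Integrating the square root of the log-covering number yields $\E\,\Rad_n(\cal H) \leq C\sqrt{v/n} = C\sqrt{(d+1)/n}$ for a universal $C$, which is the claimed bound (up to an absolute constant). Alternatively, one could use the Massart--Vapnik direct bound on the expected supremum for VC classes, which gives the same $\sqrt{d/n}$ rate without the chaining; either argument is routine.

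There is no real obstacle in this proof — the content lies entirely in (i) the identification of halfspaces as a VC class with dimension $d+1$, and (ii) the application of an off-the-shelf VC / Rademacher inequality. The only care needed is to ensure the dependence on $d$ is linear (not sub-linear like $\sqrt{d\log d}$), which is why one prefers the sharpened versions of the VC bound (Talagrand, Haussler) rather than the naive Vapnik--Chervonenkis shatter coefficient estimate that would produce an extra logarithm.
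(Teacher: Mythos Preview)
Your proposal is correct and takes essentially the same approach as the paper: the paper's proof simply cites a standard VC inequality (specifically \cite[8.3.23]{vershynin2018high}) together with the fact that halfspace indicators have VC dimension $d+1$. Your sketch merely unpacks the content of that citation (symmetrization plus Haussler/Dudley or Massart), so there is no substantive difference.
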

\begin{proof}
    Follows for example from \cite[8.3.23]{vershynin2018high} and the fact that $\cal D_a$, the family of halfspace indicators, has VC dimension $d+1$. 
\end{proof}

With \Cref{lem:VC bound} in hand, completing the argument is straightforward: To deduce \Cref{thm:intro} follow the same steps as in the proof of \Cref{cor:d_a density est}, except use \Cref{thm:intro comparison} and \Cref{lem:VC bound} in place of \Cref{prop:d_a comparison} and \Cref{lem:d_a concentration} respectively.

\subsection{Estimating Discrete Distributions}

In many practical machine learning tasks the data is discrete, albeit on a large alphabet $[k]=\{1,2,\dots,k\}$: for example, in recommender systems the alphabet could be all possible ads, products or articles. A common idea to apply modern learning pipelines to such data is to use an embedding $E:[k]\to \R^d$, with ``one-hot” encoding ($d=k$) being the most popular choice. After such an embedding, the data is effectively made “continuous” and the density estimation methods as discussed previously can be applied. Can such an approach be good in the sense of minimax estimation guarantees? We answer this question positively in this section, provided that embedding $E$ comes from an error-correcting code.

Let $\cal P_k$ denote the set of all probability distributions on the set $[k]$. Suppose we observe an i.i.d. sample $X_1,\dots,X_n \sim \nu$ from some unknown distribution $\nu \in \cal P_k$. The problem of estimating $\nu$ is effectively trivial: the empirical distribution provides a minimax optimal estimator. Indeed, it is a folklore fact, see for example \cite[Theorem 1]{canonne2020short} or \cite[Exc. VI.8]{yuryyihongbook}, that the optimal rate of estimation is given by 
\begin{equation}\label{eqn:discrete n_est}
    \sup_{\nu\in\cal P_k} \E\TV^2\left(\frac1n\sum_{i=1}^n\delta_{X_i},\nu\right) \asymp \min\left\{\frac{k}{n}, 1\right\}. 
\end{equation}

Recall from \Cref{sec:TV energy discrete} that we may choose to embed the alphabet $[k]$ into some higher dimensional Euclidean space. Given distinct points $x_1,\dots,x_k\in\R^d$ for some $d\geq1$, we can identify any distribution $\mu \in \cal P_k$ with the probability distribution $\sum_{i=1}^k \mu_i \delta_{x_i}$, where $\mu_i$ is the mass that $\mu$ puts on $i \in [k]$.  

\begin{theorem}\label{thm:discrete estimation}
    There exists a universal constant $C<\infty$ with the following property. For any alphabet size $k$ there exist embedding points $a_1,\dots,a_k \in \R^{\lceil C\log(k) \rceil}$ such that given an i.i.d. sample $X_1,\dots,X_n \sim \nu$ from an unknown $\nu\in\cal P_k$, any estimator $\tilde\nu \in \cal P_k$ that satisfies
    \begin{equation}\label{eqn:discrete estimator def}
        \cal E_1^2\left(\sum_{i=1}^k\tilde\nu_i\delta_{a_i}, \frac1n\sum_{i=1}^n \delta_{a_{X_i}}\right) \leq \frac{c}{n}
    \end{equation}
    for any $c$ enjoys the performance guarantee
    \begin{equation}\label{eqn:discrete estimator rate}
        \sup_{\nu\in\cal P_k} \E \TV^2(\tilde \nu, \nu) \leq C \min\left\{(c+1)\frac{k\sqrt{\log(k)}}{n}, 1\right\}. 
    \end{equation}
    Moreover, we may replace $\cal E_1$ by $\overline{d_H}$ in \eqref{eqn:discrete estimator def} and the result \eqref{eqn:discrete estimator rate} remains true with $\sqrt{\log(k)}$ replaced by $\log(k)$. 
\end{theorem}
\begin{proof}
    Let $a_1,\dots,a_k \in \R^d$ be the points defined in \Cref{cor:discrete comparison} (relabeled from $x_1,\dots,x_k$ for clarity) so that $d\asymp \log(k)$. By the triangle inequality we have
    \begin{align*}
        \E \cal E^2_1\left(\sum_{i=1}^k \tilde\nu_i\delta_{a_i}, \sum_{i=1}^k\nu_i\delta_{a_i}\right) &\leq 2\E \cal E^2_1\left(\sum_{i=1}^k \tilde\nu_i\delta_{a_i}, \frac1n\sum_{i=1}^n\delta_{a_{X_i}}\right) + 2\E \cal E^2_1\left(\frac1n\sum_{i=1}^n \delta_{a_{X_i}}, \sum_{i=1}^k\nu_i\delta_{a_i}\right) \\
        &\mkern-16mu\stackrel{\text{\Cref{lem:d_a concentration}}}{\lesssim} \frac{c+\max_i\|a_i\|}{n} \lesssim \frac{c+1}{n}.
    \end{align*}
    By \Cref{cor:discrete comparison}, the definition of $\tilde\nu$ and the triangle inequality it follows that 
    \begin{equation*}
        \E \TV^2(\tilde\nu,\nu) \lesssim \frac{k \sqrt d(c+1)}{n} \asymp \frac{k\sqrt{\log(k)}(c+1)}{n}.
    \end{equation*}
    Noting the trivial fact that $\TV\leq1$ completes the proof of the first claim. 
    
    Suppose now that we replace $\cal E_1$ by $\overline{d_H}$ in the definition of $\tilde\nu$. The proof follows analogously, using the chain of inequalities
    \begin{equation*}
        \frac{\TV}{\sqrt k\sqrt[4]{d} } \stackrel{\text{Cor. \ref{cor:discrete comparison}}}\lesssim \cal E_1 \stackrel{\text{Prop. \ref{prop:dh_energy}}}{\asymp} \frac{\sqrt{\Gamma\left(\frac{d+1}{2}\right)}}{\pi^{(d-1)/4}} d_H \stackrel{\max_i\|a_i\|\leq1}\lesssim \overline{d_H}, 
    \end{equation*}
    and \Cref{lem:VC bound} in place of \Cref{lem:d_a concentration}, which is where we loose the $\sqrt d\asymp\sqrt{\log(k)}$ factor. 
\end{proof}

As we explained, the empirical distribution $\tilde \nu = \frac1n\sum_{i=1}\delta_{X_i}$ achieves optimality in \eqref{eqn:discrete n_est}, and clearly also achieves $c=0$ in \eqref{eqn:discrete estimator def} i.e. minimizes the empirical risk globally. The point of \Cref{thm:discrete estimation} is to show that approximate minimizers, such as those found via SGD, are also nearly minimax optimal.

\subsubsection{Estimating H\"older Smooth Densities}   
\Cref{thm:discrete estimation} has interesting implications for density estimation over the class of distributions on the cube $[0,1]^d$ with uniformly bounded derivatives up to order $\underline\beta\eqdef \lceil\beta-1\rceil$ and $(\beta-\underline{\beta})$-H\"older continuous $\underline\beta^{th}$ derivative; call such distributions simply $\beta$-H\"older smooth.\footnote{Note that this class is not the same as $\cal P_S$, although related.} Writing $B_j$ for the cube with center $(j-\frac12)\epsilon^{1/\beta}$ and sidelength $\epsilon^{1/\beta}$ where $j\in\{1,\dots,\epsilon^{-1/\beta}\}^d$, it is known that  
    \begin{equation*}
        \sum_j \left|\int_{B_j} (f(x)-g(x))\d x\right| = c\int_{[0,1]^d} |f(x)-g(x)|\d x + O(\epsilon)
    \end{equation*}
    for any $\beta$-H\"older smooth densities $f, g$ and an $\epsilon$-independent constant $c>0$, see for example \cite[Lemma 7.2]{arias2018remember} or \cite[Proposition 2.16]{ingster2003nonparametric}. In other words, discretizing such distributions using a regular grid with $\Omega(\epsilon^{-d/\beta})$ cells maintains total-variation distances up to an additive $O(\epsilon)$ error. 
    
    Now, consider a `multilayer perceptron', that is, a fully connected multilayer neural network with activations given by $x \mapsto \one\{x\geq0\}$. Such a multilayer network with large enough hidden layers can in principle implement the discretization described above, and embed the $\epsilon^{-d/\beta}$ cells as an error correcting code. Thus, due to \Cref{thm:discrete estimation}, the ERM density estimator \eqref{eqn:intro adversarial} would achieve the minimax optimal density estimation rate $n^{-\beta/(2\beta+d)}$ over $\beta$-H\"older smooth densities up to polylog factors provided the discriminator class $\cal D$ includes the aforementioned multilayer perceptron and has VC-dimension at most polylog in $1/\epsilon$. This observation essentially generalizes \Cref{thm:intro}, which shows that if the discriminator class includes only the \emph{single} layer (with one neuron) perceptron then the best possible rate is $n^{-\beta/(2\beta+d+1)}$.

\subsection{A Stopping Criterion for Smooth Density Estimation}\label{sec:stopping crit}

As a corollary to our results, we propose a stopping criterion for training density estimators. Before doing so, let us record a result about the concentration properties of the empirical energy distance about its expectation. 

\begin{lemma}\label{lem:subgauss concentration}
    Let $\nu$ be supported on a compact subset $\Omega\subseteq\R^d$, and let $\nu_n$ be its empirical measure based on $n$ i.i.d. observations. For every $\gamma \in (0,2)$ there exists a constant $C_1 = C_1(\Omega, \gamma)>0$ such that
    \begin{equation*}
        \P\left(\cal E_\gamma(\nu,\nu_n) \geq \frac{C_1}{\sqrt n} + t\right) \leq 2\exp\left(-\frac{nt^2}{C_1}\right).
    \end{equation*}
    In other words, $\cal E_\gamma(\nu,\nu_n)$ is $O(1/n)$-subGaussian. 
\end{lemma}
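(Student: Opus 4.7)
The plan is to establish the result via McDiarmid's bounded differences inequality, leveraging the RKHS/MMD formulation of the energy distance recorded in \Cref{sec:mmd form}. The strategy decouples the claim into (i) a bound on the expectation and (ii) sub-Gaussian concentration around that expectation.

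First, I would handle the expectation. Since $\nu$ is supported on a compact set $\Omega$, its $\gamma$'th absolute moment satisfies $M_\gamma(\nu) \leq (\sup_{x\in\Omega}\|x\|)^\gamma$, which is finite and controllable in terms of $\mathrm{diam}(\Omega)$ (using translation invariance of $\mathcal{E}_\gamma$ to place the origin in $\Omega$ without loss of generality). Combining this with \Cref{lem:d_a concentration} and Jensen's inequality gives
\begin{equation*}
    \mathbb{E}\,\mathcal{E}_\gamma(\nu,\nu_n) \leq \sqrt{\mathbb{E}\,\mathcal{E}_\gamma^2(\nu,\nu_n)} \leq \sqrt{\frac{10 d^{\gamma/2} M_\gamma(\nu)}{n}} \lesssim \frac{C_1}{\sqrt n},
\end{equation*}
absorbing the dependence on $d,\gamma$, and $\mathrm{diam}(\Omega)$ into $C_1$.

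Second, I would verify the bounded-differences property. Write $\nu_n$ and $\nu_n'$ for empirical measures that differ only in the $i$'th sample ($X_i$ versus $X_i'$). The triangle inequality for $\mathcal{E}_\gamma$, which is a bona fide metric by \Cref{prop:norm equiv form}, gives
\begin{equation*}
    \left|\mathcal{E}_\gamma(\nu,\nu_n) - \mathcal{E}_\gamma(\nu,\nu_n')\right| \leq \mathcal{E}_\gamma(\nu_n,\nu_n').
\end{equation*}
Using the MMD identity $\mathcal{E}_\gamma(\mu,\mu') = \|\theta_\mu - \theta_{\mu'}\|_{\mathcal{H}_{k_\gamma}}$ with the kernel $k_\gamma(x,y) = \|x\|^\gamma + \|y\|^\gamma - \|x-y\|^\gamma$ of \Cref{eq:kernel_def}, and observing that $\nu_n - \nu_n' = n^{-1}(\delta_{X_i} - \delta_{X_i'})$, a direct expansion yields
\begin{equation*}
    \mathcal{E}_\gamma^2(\nu_n,\nu_n') = \frac{1}{n^2}\bigl[k_\gamma(X_i,X_i) + k_\gamma(X_i',X_i') - 2k_\gamma(X_i,X_i')\bigr] = \frac{2\|X_i - X_i'\|^\gamma}{n^2} \leq \frac{2\,\mathrm{diam}(\Omega)^\gamma}{n^2}.
\end{equation*}
Hence $\mathcal{E}_\gamma(\nu,\nu_n)$, viewed as a function of the i.i.d. samples $X_1,\dots,X_n$, has bounded differences with constants $c_i \leq \sqrt{2}\,\mathrm{diam}(\Omega)^{\gamma/2}/n$.

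Third, I would invoke McDiarmid's inequality with $\sum_i c_i^2 \leq 2\,\mathrm{diam}(\Omega)^\gamma/n$ to conclude
\begin{equation*}
    \mathbb{P}\bigl(\mathcal{E}_\gamma(\nu,\nu_n) - \mathbb{E}\,\mathcal{E}_\gamma(\nu,\nu_n) \geq t\bigr) \leq 2\exp\!\left(-\frac{n t^2}{\mathrm{diam}(\Omega)^\gamma}\right),
\end{equation*}
and then combine this with the expectation bound from the first step, adjusting $C_1$ to absorb the constant factors. The entire argument is fairly routine; the only mildly delicate point is the single-coordinate perturbation computation, where one must be careful that the diagonal term $k_\gamma(x,x) = 2\|x\|^\gamma$ cancels neatly in the expansion so that the bound depends on $\|X_i - X_i'\|$ rather than on the individual norms, which is what makes the final concentration radius scale with $\mathrm{diam}(\Omega)^\gamma$ rather than with a moment-like quantity.
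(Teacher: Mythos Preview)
Your proposal is correct and follows essentially the same approach as the paper: bound the expectation via \Cref{lem:d_a concentration}, then establish sub-Gaussian concentration via McDiarmid's inequality using the MMD/kernel structure from \Cref{sec:mmd form}. The only cosmetic difference is that the paper verifies bounded differences by bounding the oscillation of the centered kernel $\inf_t \sup_{x,y\in\Omega}|k_\gamma(x,y)-t|\lesssim\operatorname{diam}(\Omega)^\gamma$, whereas you go through the triangle inequality $|\cal E_\gamma(\nu,\nu_n)-\cal E_\gamma(\nu,\nu_n')|\leq\cal E_\gamma(\nu_n,\nu_n')$ and compute the latter explicitly; your route is arguably cleaner since the diagonal cancellation $k_\gamma(X_i,X_i)+k_\gamma(X_i',X_i')-2k_\gamma(X_i,X_i')=2\|X_i-X_i'\|^\gamma$ makes the $\operatorname{diam}(\Omega)^\gamma$ dependence transparent without any centering.
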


\begin{proof}
    Recall the MMD formulation of the generalized energy distance from \Cref{sec:mmd form}. The corresponding kernel is given by $k_\gamma(x,y) = \|x\|^\gamma + \|y\|^\gamma - \|x-y\|^\gamma$. Clearly $$\sup_{x,x',y,y'\in\operatorname{supp}(\nu)}(k_\gamma(x,y)-k_\gamma(x',y')) \lesssim \operatorname{diam}(\Omega)^\gamma.$$Therefore, by McDiarmid's inequality we know that $\cal E_\gamma(\nu, \nu_n)$ is $O(1/n)$-subGaussian (note we don't track constants depending on $\Omega$ here). From \Cref{lem:d_a concentration} we know that $\E\cal E_\gamma(\nu,\nu_n) \lesssim 1/\sqrt{n}$, and the conclusion follows. 
\end{proof}

Consider the following scenario: we have i.i.d. training data $X_1,\dots,X_n$ from some
distribution $\nu$ and we are training an arbitrary generative model to estimate $\nu$. Suppose
that this training process gives us a sequence of density estimators $\{\mu_k\}_{k\geq1}$, which
could be the result of, say, subsequent gradient descent steps on our parametric class of
generators. 
Is there any way to figure out after how many steps $K$ we may stop the training process? In other
words, can we identify a value of $K$ such that $\TV(\nu,\mu_K)$ is guaranteed to be less than
some threshold with probability $1-\delta$? Note that an additional difficulty here is that our
generative model for $\mu_k$ is able to generate the samples from $\mu_k$ but otherwise gives us
no other access to $\mu_k$. The fast (dimension-free) concentration properties of $\cal E_\gamma$
and the minimax optimality guarantees of its minimizer (whenever $\nu$ is smooth) make it an
excellent choice for such a stopping criteria. 

Let $\nu \in \cal P_S(\beta,d, C)$ and let $\nu_n$ be its empirical version based on the $n$ i.i.d. observations. Assume further that $\{\mu_k\}_{k\geq1} \subseteq\cal P_S(\beta,d, C)$ is a sequence of density estimators based on the sample $X_1,\dots,X_n$. Finally, given the training sample $(X_1,\dots,X_n)$, for each $k$ let $\mu_{k,m_k} = \frac{1}{m_k} \sum_{i=1}^{m_k}\delta_{X^{(k)}_i}$ be the empirical distribution of the sample $(X^{(k)}_1,\dots,X^{(k)}_{m_k}) \sim \mu_k^{\otimes m_k} $. 
\begin{proposition}\label{prop:stopping crit}
    For any $\beta>0,d\geq1$ and $\gamma \in (0,2)$ there exists a constant $c = c(\beta, d, \gamma) $ such that 
    \begin{equation*}
        \P\left(\TV(\mu_k, \nu) \leq c
        \left(\sqrt{\frac{\log(1/\delta)}{n}} + \cal E_\gamma(\mu_{k,m_k}, \nu_n)
        \right)^{\frac{2\beta}{2\beta+d+\gamma}},\,\,\forall\,k\geq1\right) \geq 1- 2\delta
    \end{equation*}
    provided we take $m_k = c n \log(k^2/\delta)/\log(1/\delta)$. 
\end{proposition}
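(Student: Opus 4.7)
The plan is to combine the comparison inequality of \Cref{prop:d_a comparison}, a triangle inequality decomposition of $\cal E_\gamma(\mu_k, \nu)$, and the sub-Gaussian concentration of \Cref{lem:subgauss concentration} applied both to the empirical distribution $\nu_n$ of the true data and to each empirical distribution $\mu_{k, m_k}$ of the generator. The quantity $\cal E_\gamma(\mu_{k, m_k}, \nu_n)$ is the observable object on the right-hand side, while the two unobservable deviations $\cal E_\gamma(\nu, \nu_n)$ and $\cal E_\gamma(\mu_k, \mu_{k, m_k})$ must be shown, uniformly in $k$, to be $\cal O(\sqrt{\log(1/\delta)/n})$ on an event of probability at least $1-2\delta$.

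First I would apply \Cref{lem:subgauss concentration} to $\nu_n$: since $\supp(\nu) \subseteq \bb B(0,1)$ has diameter $\leq 2$, this yields an event $\cal A_0$ of probability at least $1-\delta$ on which $\cal E_\gamma(\nu, \nu_n) \lesssim \sqrt{\log(1/\delta)/n}$. Next, conditional on the training sample (so that $\mu_k$ is deterministic and $\mu_{k, m_k}$ is the empirical distribution of a fresh i.i.d.\ sample from it), I would apply the same lemma for each $k \geq 1$ with confidence parameter $\delta/(2k^2)$, obtaining an event $\cal A_k$ on which
\[
    \cal E_\gamma(\mu_k, \mu_{k, m_k}) \lesssim \frac{1}{\sqrt{m_k}} + \sqrt{\frac{\log(k^2/\delta)}{m_k}}.
\]
Substituting $m_k = c n \log(k^2/\delta)/\log(1/\delta)$ collapses both terms to $\cal O(\sqrt{\log(1/\delta)/n})$ with constants that are independent of $k$. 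A union bound over $k \geq 1$ using $\sum_{k\geq 1} k^{-2} = \pi^2/6 < 2$ then bounds the total failure probability by $\delta + \delta = 2\delta$.

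On $\cal A_0 \cap \bigcap_{k\geq 1} \cal A_k$, the triangle inequality
\[
    \cal E_\gamma(\mu_k, \nu) \leq \cal E_\gamma(\mu_k, \mu_{k, m_k}) + \cal E_\gamma(\mu_{k, m_k}, \nu_n) + \cal E_\gamma(\nu_n, \nu)
\]
combined with \Cref{prop:d_a comparison} (which applies since both $\mu_k$ and $\nu$ lie in $\cal P_S(\beta, d, C)$) yields the advertised inequality for every $k$, with the prefactor $\sqrt{\gamma(2-\gamma)}$ absorbed into the constant $c$ since $\gamma$ is fixed.

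The one delicate point is the calibration of $m_k$: it must grow with $k$ so that the union bound over the infinite sequence $\{\mu_k\}_{k \geq 1}$ is summable, but the growth must be slow enough not to introduce an extra $\log k$ factor into the final bound. The choice $m_k \asymp n \log(k^2/\delta)/\log(1/\delta)$ is tuned precisely so that the $\sqrt{\log(k^2/\delta)/m_k}$ term from the tail parameter cancels the denominator, leaving only the baseline $\sqrt{\log(1/\delta)/n}$ scale. Everything else is a routine combination of earlier results.
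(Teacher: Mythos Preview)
Your proposal is correct and follows essentially the same route as the paper: apply \Cref{lem:subgauss concentration} once to $\nu_n$ and once (conditionally on the training sample) to each $\mu_{k,m_k}$ with confidence $\asymp \delta/k^2$, take a union bound using $\sum_k k^{-2}<\infty$, combine via the triangle inequality for $\cal E_\gamma$, and finish with the comparison inequality of \Cref{prop:d_a comparison}. The calibration $m_k \asymp n\log(k^2/\delta)/\log(1/\delta)$ is exactly the one the paper makes, for the reason you identify.
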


\begin{proof}
    Let $c=C_1$ where $C_1$ is as in \Cref{lem:subgauss concentration} and fix $\delta\in(0,1)$. Define the event 
    $A = \left\{\cal E_\gamma(\nu, \nu_n) \geq \frac{c}{\sqrt n} +  \sqrt{\frac{c\log(2/\delta)}{n}}\right\}$
    and similarly
    \begin{equation*}
        A_k = \left\{\cal E_\gamma(\mu_{k,m_k}, \mu_k) \geq \frac{c}{\sqrt{m_k}} + \sqrt{\frac{ct_k}{m_k}}\right\}
    \end{equation*}
    for some sequence $t_1,t_2,\dots,$ and each $k\geq1$. By \Cref{lem:subgauss concentration},
\begin{align*}
\P(A) &\leq \delta, \\
    \P(A_k) &= \E \P(A_k | X_1,\dots,X_n) \leq 2\exp(-t_k).
\end{align*}
Taking $t_k = \log(k^2\pi^2/(3\delta))$, the union bound gives
\begin{align*}
    \P\left(A \cup \bigcup_{k\geq1} A_k\right) \leq 2\delta. 
\end{align*}
By the inequality $\cal E_\gamma(\mu_k, \nu) \leq \cal E_\gamma(\mu_k, \mu_{k,m_k}) + \cal E_\gamma(\mu_{k,m_k}, \nu_n)+\cal E_\gamma(\nu_n, \nu)$ it follows that 
\begin{align*}
    &\P\left(\exists k\,:\, \cal E_\gamma(\nu, \mu_k) >  \cal E_\gamma(\mu_{k,m_k}, \nu_n) + \frac{c}{\sqrt n} + \frac{c}{\sqrt m_k} + \sqrt{\frac{c\log(2/\delta)}{n}} + \sqrt{\frac{c\log(k^2\pi^2/(3\delta))}{m_k}}  \right) \\&\qquad\leq 2\delta. 
\end{align*}
Thus, by choosing $m_k \asymp n \log(k^2/\delta)/\log(1/\delta)$ we can conclude that there exists a constant $c'$ depending only on $\beta,d,\gamma$ such that 
\begin{equation*}
    \P\left(\cal E_\gamma(\nu, \mu_k) \leq c'\sqrt{\frac{\log(1/\delta)}{n}} +  \cal E_\gamma(\mu_{k,m_k}, \nu_n),\forall k\geq 1 \right) \geq 1-2\delta. 
\end{equation*}
The final conclusion follows from \Cref{prop:d_a comparison}. 
\end{proof}

Note that our bound on the probability holds for all $k$ simultaneously, which is made possible by the fact that $m_k$ grows as $k \rightarrow \infty$. The empirical relevance of such a result is immediate: suppose we have proposed candidate generative models $\mu_1, \mu_2, \ldots$ (e.g. one after each period of training epochs, or from different training models) that is trained on an i.i.d. dataset $X_1,\dots,X_n$ of size $n$ from $\nu \in \cal P_S(\beta,d, C)$. A ``verifier'' only needs to request for $m_k$ independent draws from the $k$'th candidate, and if we ever achieve $\cal E_{(\log n)^{-1}}\left(\mu_{k, m_k}, \nu_n\right) \lesssim \sqrt{\log(1/\delta)/n}$ we can stop training and claim by \Cref{cor:d_a density est} that we are a constant factor away from (near-)minimax optimality with probability $1-\delta$.

\section{Suboptimality for Two-Sample Testing}\label{sec: negative}

So far in this paper we have shown how the empirical energy distance minimizer, while being mismatched
with the target total variation loss, nevertheless achieves nearly minimax optimal performance
for density estimation tasks. Unfortunately, this surprising effect does not carry over to other
statistical tasks, such as two-sample testing, which we describe in this section.

The task of two-sample testing over a family of distributions $\cal P$ is the following. Given two samples $(X, Y)\sim p^{\otimes n}\otimes q^{\otimes m}$ with unknown distribution, we need to distinguish between the hypotheses
$$H_0: p=q\text{ and } p \in \cal P, \quad\text{versus}\quad H_1: \TV(p, q)>\eps, \text{and }p, q \in \cal P$$
with vanishing type-I and type-II error. The special case of $m=\infty$ is known as \textit{goodness-of-fit} testing, and for the class of smooth distributions it was famously solved by \cite{Ingster87Minimax}, who showed that in dimension $d=1$ the problem is solvable with probability $1-o(1)$ if and only if 
\begin{equation}\label{eq:ingster}
   n = \omega(\epsilon^{-\frac{2\beta+d/2}{\beta}}),  
\end{equation}
in which case a variant of the $\chi^2$-test works. The case of general $m,n$ and $d\geq1$ was resolved in~\cite{arias2018remember} who showed that the problem is solvable if and only if~\eqref{eq:ingster} holds with $n$ replaced by $\min\{n,m\}$, using the very same $\chi^2$-test; see also \cite{li2019optimality}. In the remainder of the section we focus on the $m=n$ case for simplicity. 

In a recent paper \citep{paik2023maximum}, the following test statistic for two-sample testing was proposed: 
$$T_{d, k}(p,q)=\max _{(w, b) \in \mathbb{S}^{d-1} \times[0, \infty)}\left|\E_{X\sim p}\left(w^{\top} X-b\right)_{+}^k-\E_{Y\sim q}\left(w^{\top} Y-b\right)_{+}^k\right|$$
where the arguments $X, Y$ can be either discrete (e.g. via observed samples) or continuous densities. Note that here we take $(a)^0_+ = \one\{a \geq 0\}$ by convention. Specifically, the test proposed is to reject the null hypothesis when 
\begin{equation}\label{eqn: paik tst}
    T_{d,k}(p_n, q_n) \geq t_{n}, 
\end{equation}
where $p_n = \frac1n\sum_{i=1}^n\delta_{X_i}, q_n=\frac1n\sum_{i=1}^n\delta_{Y_i}$ are empirical measures and the threshold that satisfies both $t_{n} = o(1)$ and $t_n = \omega(1/\sqrt{n})$. 
One of their main technical results~\cite[Theorem 6]{paik2023maximum} asserts that the test \eqref{eqn: paik tst} returns the correct hypothesis with probability $1-o(1)$ asymptotically as $n\to\infty$ for any
qualifying sequence $\{t_n\}_{n\geq1}$ and fixed $p,q$. However, this result leaves open questions about the sample complexity of
their test, and in particular, whether it is able to achieve known minimax rates. It turns out that our results imply that their test, at least in the $k=0$ case, 
cannot attain the optimal two-sample testing sample complexity~\eqref{eq:ingster} over the smooth class $\cal P_S(\beta,d, C)$. To connect to our results, notice that 
 $$ T_{d,0}(p,q) = \overline{d_H}(p,q)\,.$$

\begin{proposition}\label{prop: tst bad}
    For all $d, \beta>0$, there exists constants $c=c(d, \beta),c' = c'(d, \beta)$ such that for all $\eps>0$, there
    exists probability density functions $p, q$ supported on the $d$-dim unit ball such that 
    \begin{enumerate}
    \item $\|p\|_{\beta, 2}, \|q\|_{\beta, 2}< c$,
    \item     $\|p-q\|_1\asymp\|p-q\|_2\asymp\eps$, and
    \item the expected test statistic satisfies  
    $$\E[T_{d, 0}(p_n, q_n)]\le 
    \frac{c'}{\sqrt{n}}\,$$for any $n\leq (\log {1\over \eps})^{-d} \eps^{-\frac{2\beta+d+1}{\beta}}$. 
    \end{enumerate}
\end{proposition}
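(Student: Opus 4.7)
The plan is to reuse the tightness construction behind \Cref{prop:d_H tightness} (the $\gamma=1$ case of \Cref{thm:tightness}): for each target scale $\epsilon\in(0,1)$ it furnishes densities $p,q\in\cal P_S(\beta,d,C)$ supported in the unit ball with $\TV(p,q)\asymp\epsilon$ and
\begin{equation*}
    \overline{d_H}(p,q)\;\lesssim\;\epsilon^{(2\beta+d+1)/(2\beta)}\log(3+1/\epsilon)^{C_1\one\{d\geq 2\}}.
\end{equation*}
Condition (a) is then immediate since $p,q\in\cal P_S(\beta,d,C)$ by construction, and the $L^1$ half of (b) is immediate since $\|p-q\|_1=2\TV(p,q)\asymp\epsilon$.

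For the matching $L^2$ bound $\|p-q\|_2\asymp\epsilon$ in (b), the lower bound $\|p-q\|_2\gtrsim\|p-q\|_1\asymp\epsilon$ is immediate from Cauchy--Schwarz together with $\supp(p-q)\subseteq\bb B(0,1)$. For the upper bound, I would open up the explicit construction $f=p-q=\kappa\,g\,h$ sketched after \Cref{thm:tightness}, where $g(x)\propto\|x\|^{1-d/2}J_{d/2-1}(r\|x\|)$ is (up to normalization) the inverse Fourier transform of the uniform surface measure on $r\bb S^{d-1}$ and $h$ is a fixed smooth bump supported away from the origin. To balance the Sobolev bound against $\overline{d_H}$, the construction takes $r\asymp\epsilon^{-1/\beta}$ and $\kappa\asymp\epsilon\,r^{-(d-1)/2}$. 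Plugging these into \Cref{lem:informal lower construct} at $s=0$ gives $\|\widehat f\|_2^2\lesssim\kappa^2 r^{d-1}\log^{2d+1}\asymp\epsilon^2\log(1/\epsilon)^{O(1)}$, so by Parseval $\|p-q\|_2\lesssim\epsilon\log(1/\epsilon)^{O(1)}$. The polylog slack is absorbed into the constant $c$ by a mild redefinition of $\epsilon$ (which only affects $c''$).

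For (c), note $T_{d,0}(\mu,\nu)=\overline{d_H}(\mu,\nu)$, so the triangle inequality for IPMs together with \Cref{lem:VC bound} yields
\begin{equation*}
    \E[T_{d,0}(p_n,q_n)]\;\leq\;\overline{d_H}(p,q)+\E[\overline{d_H}(p_n,p)]+\E[\overline{d_H}(q_n,q)]\;\lesssim\;\overline{d_H}(p,q)+\sqrt{d/n}.
\end{equation*}
Taking $c''=-2C_1$, the hypothesis $n\leq(\log(1/\epsilon))^{c''}\epsilon^{-(2\beta+d+1)/\beta}$ is precisely the statement $\overline{d_H}(p,q)\lesssim 1/\sqrt n$, and combining this with the $O(\sqrt{d/n})$ sample-fluctuation term closes the argument.

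The main obstacle is really the $L^2$ check in (b), since \Cref{thm:tightness} as stated only certifies the $\TV$-versus-$\overline{d_H}$ tradeoff and is silent about $\|p-q\|_2$. The resolution is to peel back into the Bessel-profile geometry of the construction and invoke the Fourier-side estimate of \Cref{lem:informal lower construct} at $s=0$, after which everything else is a triangle-inequality exercise combined with the VC concentration bound for half-spaces.
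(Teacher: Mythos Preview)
Your approach is essentially the paper's: use the tightness construction of \Cref{thm:tightness}/\Cref{prop:d_H tightness}, identify $T_{d,0}=\overline{d_H}$, and control $\E\,\overline{d_H}(p_n,q_n)$ by the triangle inequality together with \Cref{lem:VC bound}. Parts (a), the $L^1$ half of (b), and (c) are handled correctly and match the paper's argument.

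The one real glitch is the $L^2$ upper bound in (b). Invoking \Cref{lem:informal lower construct} at $s=0$ only yields $\|p-q\|_2\lesssim\epsilon\log(1/\epsilon)^{O(1)}$, and your proposed ``mild redefinition of $\epsilon$'' does not repair this: if you replace $\epsilon$ by $\epsilon'=\|p-q\|_2$ you lose the exact $\|p-q\|_1\asymp\epsilon'$ match, since $\|p-q\|_1\asymp\epsilon$ would then differ from $\epsilon'$ by a polylog. The paper avoids this detour entirely: property~3 of \Cref{prop:f_construction} gives $\|f\|_\infty\asymp\|f\|_2\asymp\|f\|_1\asymp r^{-1/2}$ with no log factors, proven directly from the Bessel asymptotic $J_{d/2-1}(r\|x\|)=O((r\|x\|)^{-1/2})$ on the support of $h$ (which is bounded away from the origin). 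Since you already wrote down the explicit form $g(x)\propto\|x\|^{1-d/2}J_{d/2-1}(r\|x\|)$, you can simply bound $\|f\|_\infty\lesssim\kappa r^{-1/2}\asymp\epsilon$ on $\{\|x\|\geq r_0\}$ and conclude $\|p-q\|_2\lesssim\|p-q\|_\infty\lesssim\epsilon$. That closes (b) cleanly and brings your proof in line with the paper's.
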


In other words, consistent testing using the statistic $T_{d,0}$ is impossible with $n=\tilde
o(\eps^{-\frac{2\beta+d+1}{\beta}})$ samples, which is a far cry from the optimal sample complexity
\eqref{eq:ingster} attainable by the $\chi^2$ test. The proof of \Cref{prop: tst bad} is given at \Cref{sec: proof tst bad}.

\section{Conclusion}\label{sec:conclusion}

We analyzed the simple discriminating class of affine classifiers and proved its effectiveness in the ERM-GAN setting \eqref{eqn:intro adversarial} within the Sobolev class $\mathcal{P}_S(\beta,d)$ and Gaussian mixtures $\mathcal{P}_G(d)$ with respect to the $L^2$ norm (see \Cref{cor:d_a density est,prop:optimize_alpha}) and the total variation distance (see \Cref{thm:intro}). Our findings affirm the rate's near-optimality for the considered classes of $\mathcal{P}_S$ and $\mathcal{P}_G$. Moreover, we present inequalities that interlink the $\cal E_\gamma$, TV, and $L^2$ distances, and demonstrate (in some cases) the tightness of these relationships via corresponding lower bound constructions (\Cref{sec: lowers}). We also interpret the generalized energy distance in several ways that help advocate for its use in real applications. This work connects to a broader literature on the theoretical analysis of GAN-style models.

An interesting question emerges about the interaction between the expressiveness and concentration of the discriminator class. We found that the class of affine classifiers $\cal D_1$ is guaranteed to maintain some (potentially small) proportion of the total variation distance, and that it decays at the parametric rate between population and empirical distributions. 
Thus, we have traded off expressiveness for better concentration of the resulting IPM. As discussed in \Cref{sec:related works}, Yatracos' estimator lies at the other end of this discriminator expressiveness-concentration trade-off: the distance $d_Y$ is as expressive as total variation when restricted to the generator class $\cal G$, but $\sup_{\nu\in\cal G}\E d_Y(\nu,\nu_n)$ decays strictly slower than $1/\sqrt n$ for nonparametric classes $\cal G$. A downside compared to $\overline{d_H}$ is that $(i)$ the Yatracos class $\cal Y$ requires knowledge of $\cal G$ while our $\cal D_1$ is oblivious to $\cal G$ and $(ii)$ the distance $d_Y$ is impractical to compute as it requires a covering of $\cal G$. Our question is: is it possible to find a class of sets $\cal S \subseteq 2^{\bb B(0,1)}$ that lies at an intermediate point on this trade-off? In other words, does $\cal S$ exist such that the ERM $\tilde\nu$ defined in \eqref{eqn:intro adversarial} using the discriminator class $\cal D = \cal S$ is optimal over, say, $\cal G = \cal P_S$ and the induced distance converges slower than $1/\sqrt n$ but faster than $n^{-\beta/(2\beta+d)}$ between empirical and population measures? Would there be desiderata for a sample-efficient discriminator that has neither full expressiveness against total variation and does not concentrate at a parametric rate?

%\section*{Acknowledgements}\input{acks}

\bibliographystyle{alpha}
\bibliography{bib}

\appendix

\newpage
\section{Related works showing ``neural discriminator'' implies distance is small}\label{appendix:related_works}

In \Cref{sec:related works} we already reviewed at a high level existing results on GANs and
density estimation. Here we provide full details and emphasize differences with our work.

%In a line of recent works studying (Wasserstein) GAN density estimator with neural networks such as \cite{belomestny2021rates} and \cite{stephanovitch2023wasserstein}, the authors considered classes of discriminators containing all smooth functions under certain conditions and used neural network approximation results (\cite{belomestny2023simultaneous}) on nets with ReQU (squared ReLU) activation and a large width/number of layers to bound $\Delta_{\cal D}$, the discrimination error. 
%In their cases, by assuming Oracle access to the optimal discriminator (and generator) neural network, (log-)optimal rates for density estimation can be achieved. Works using (oracle) network approximating classes of smooth discriminator functions (as well as generators) in GAN also appeared in \cite{liang2021well}, \cite{singh2018nonparametric} which considered classical orthogonal projection estimators.

\paragraph{Results on smooth pushforward densities}

A range of works has focused on showing results for not smooth densities but smooth pushforwards $g_\#U$ (for example $U\sim\operatorname{Unif}([0, 1]^d)$ and $g$ is $\beta$-H\"older smooth).
While the exact setting differs, it can be shown that at least for TV estimation on the open unit cube (or unit ball), smooth pushforward estimations imply smooth density estimation. Indeed, one can first assume without loss of generality that the target density is bounded below via a linear transform $f\to \frac{1}{2}(\operatorname{unif}+f)$ (see e.g. Lemma 32.18 in \cite{yuryyihongbook}) and bounded above from smoothness. Furthermore, for any two (H\"older) $\beta$-smooth densities bounded above and below on (open) uniformly convex domains, the unique optimal transport map with respect to the quadratic cost is $(\beta+1)$-smooth (see Lemma 10 in \cite{chae2023likelihood} as well as Theorem 4.14 in \cite{villani2003topics}). Therefore, these rates could be compared to our results.

\begin{enumerate}
    \item (Concurrent with ours) \cite{stephanovitch2023wasserstein},
    Model 1 and 3.

     This model assumes that the true density is $g_\# U$ for U uniform on $d$-dim torus and $g$ is $(\beta+1)$-H\"older smooth from the torus to $\mathbb R^p$. The authors obtain optimal rates with respect to discriminator class consisting of $\gamma$-smooth functions for $\gamma \geq 1$ (the case of $\gamma=1$ matches with Wasserstein $W_1$). Under the assumption that the target density is $\beta$-smooth, results for comparing $W_1$ and $\TV$ (\cite{chae2024wasserstein} and Corollary 4.4 in \cite{stephanovitch2023wasserstein}) apply. By substituting in the rate of $W_1$ distance ($\gamma=1$) in Theorem 5.8 (Model 3 therein) $\widetilde O\left(n^{-\frac{\beta+1}{2 \beta+d}}\right)$ with \cite{CHAE2020108771}, one gets the optimal rate $\widetilde O\left(n^{-\frac{\beta}{2 \beta+d}}\right)$.
     However, their discriminator crucially rely on knowledge of parameters $n, \beta$ (see (4.5) therein)
    
    \item \cite{chae2022rates} 
    
    The authors obtained results for $W_1$ distance of smooth pushforwards where the observed
    samples are Gaussian-noised with variance $\sigma$. In the zero-noise case, their convergence
    results in $W_1$ on $\beta$-H\"older smooth pushforward functions give slightly suboptimal
    rates of $O\left(n^{-\frac{\beta}{2 \beta+d}}\right)$ (Theorem 3.2 therein) compared to the
    (conjectured) optimal rate $O\left(n^{-\frac{\beta}{2 \beta+d-2}}\right)$ (Theorem 4.1
    therein). As above, their results on $W_1$ could be applied to total variation in this setting
    by known distance comparison inequalities. Furthermore, their discriminator is a large
    non-parametric class (not a neural network).

    \item \cite{belomestny2021rates} 
    
    The authors studied the GAN estimator on $U\sim\operatorname{Unif}([0, 1]^d)$ and the
    pushforward map $g$ is $\Lambda$-regular and $(\beta+1)$-H\"older smooth (despite having the same minimax rate, this class do \emph{not} cover all bounded $\beta$-smooth densities, see discussion after Theorem 3 therein). Their results (Theorem
    2) obtained (log)optimal JS divergence rates, equivalent to TV since $
    \TV^2\lesssim\text{JS}$.
	In contrast to our work, their discriminator class is essentially a non-parametric
    class (of ${p(x)\over q(x) + p(x)}$, where $p,q$ range over pairs of smooth densities) that is
    approximated by giant neural networks, for which they assumed existence of an oracle
    approximating smooth discriminator functions over those networks (\cite{belomestny2023simultaneous}). Furthermore, this discriminator network needs to be larger than some function of $\beta$ and $n$ whereas ours is fixed.

%    Let $w\in W\to p_w\in \mathcal{H}^\beta$ be some latent space to density mapping such that on any $x\in [0, 1]^d$, $p_w(x)$ as a function of $w$ is Lipschitz, they wanted$$
% \widehat{w} \in \underset{w \in \mathrm{W}}{\operatorname{argmin}} \max _{\theta \in \Theta} L_n(w, \theta),
% $$
% where let $Y_j\sim \mathrm{p}_w$ the following loss is supposed to approximate JS
% $$
% L_n(w, \theta)=\frac{1}{2 n} \sum_{i=1}^n \log D_\theta\left(X_i\right)+\frac{1}{2 n} \sum_{j=1}^n \log \left(1-D_\theta\left(Y_j\right)\right)
% $$
%  and define $$\Delta_{\mathcal{D}}=\max _{w \in {W}} \min _{\theta \in \Theta}\left[\mathrm{JS}\left(\mathrm{p}_w, \mathrm{p}^*\right)-\log 2-L(w, \theta)\right]$$the authors have shown that so long as the density class admits (for all $w$)
%     $$\frac{\mathrm{p}^*}{\mathrm{p}^*+\mathrm{p}_w} \in \mathcal{H}^\beta\left(C,[0,1]^d\right),$$
%     one has
%     $$\Delta_{\mathcal{D}} \lesssim \max _{w \in \mathrm{W}} \min _{\theta \in \Theta}\left\|\frac{\mathrm{p}^*}{\mathrm{p}^*+\mathrm{p}_w}-D_\theta\right\|_{L_{\infty}\left([0,1]^d\right)}^2 \lesssim\left(\frac{\log n}{n}\right)^{2 \beta /(2 \beta+d)}$$for a proper $D_\theta$ approximating smooth functions. See slack for why this doesn't get into our way

\end{enumerate}

\paragraph{Results on smooth densities}
There is also a large literature on GAN estimation of smooth densities (with slightly varying
definitions) that are useful to review for contrasting with our own work.

\begin{enumerate}

    \item \cite{singh2018nonparametric} and \cite{uppal2019nonparametric}

     While these works focus on projection density estimators, Theorem 7 in \cite{singh2018nonparametric} and Theorem 9 in \cite{uppal2019nonparametric} have shown smooth density estimation rates on GANs. The authors have shown that if the discriminator class (functions of Sobolev smoothness and Besov smoothness $s$, resp.) is replaced with a neural network then the GAN estimator has optimal rate \underline{if} the neural networks well approximate the smoothness class, for which known results such as \cite{yarotsky2017error} applies. 
     However, bounds on $\TV$ or $W_1$ are not directly covered in their theorems.

     \item\cite{chen2020distribution}

    The authors considered H\"older-smooth densities on convex support that are bounded below.
    They obtained sub-optimal rates  $O\left(n^{-\frac{\gamma}{2 \gamma+d}} \vee n^{-\frac{\beta+1}{2(\beta+1)+d}}\right)$  on the IPM for H\"older smooth functions ${\mathcal{H}^\gamma}, \gamma\geq 1$ (where $\gamma=1$ transfers to Lipschitz functions with $W_1$ being the respective IPM) versus the minimax rate $O\left(n^{-\frac{\beta+\gamma}{2 \beta+d}} \vee n^{-\frac{1}{2}}\right)$. They also had to rely on (oracle) neural network universal approximating discriminators that depends on $\beta$.
\end{enumerate}

To conclude this section, we mention that there are also several recent works on non-GAN
estimators that we omitted, such as \cite[Model 2]{stephanovitch2023wasserstein}, \cite[Theorem 4]{liang2021well}, \cite{divol2022measure} and
\cite{tang2023minimax}. We recommend survey in Section 2.3 and Table 1 of
\cite{stephanovitch2023wasserstein}.

\section{Auxiliary Technical Results}
In this section we list some technical lemmas that are used in our later proofs. 

\begin{theorem}[Plancherel theorem]\label{thm:plancherel}
    Let $f,g \in L^2(\R^d)$. Then 
    \begin{equation*}
        \int_{\R^d} f(x) \overline{g(x)} \D x = \frac{1}{(2\pi)^d} \int_{\R^d} \widehat{f}(\omega) \overline{\widehat{g}(\omega)} \D \omega. 
    \end{equation*}
\end{theorem}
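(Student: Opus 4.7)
The plan is to follow the standard two-step strategy: first establish the identity on a dense subspace of $L^2(\R^d)$ where the Fourier transform has explicit analytic meaning, and then extend to all of $L^2$ by continuity. By the polarization identity
\begin{equation*}
    4\langle f,g\rangle = \|f+g\|_2^2 - \|f-g\|_2^2 + i\|f+ig\|_2^2 - i\|f-ig\|_2^2,
\end{equation*}
and the corresponding identity for $\widehat f,\widehat g$, it suffices to prove the diagonal case $\|f\|_2^2 = (2\pi)^{-d}\|\widehat f\|_2^2$.

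First I would prove the identity for $f$ in the Schwartz class $\mathcal{S}(\R^d)$. For such $f$, one knows that $\widehat f \in \mathcal{S}(\R^d)$ and Fourier inversion holds pointwise:
\begin{equation*}
    f(x) = \frac{1}{(2\pi)^d}\int_{\R^d} e^{i\langle x,\omega\rangle} \widehat f(\omega)\,\D\omega.
\end{equation*}
Substituting this expression for one copy of $f$ in $\int f(x)\overline{f(x)}\,\D x$, taking complex conjugates inside the outer integral, and interchanging the order of integration (justified by the rapid decay of $\widehat f$ and $f$, so the integrand is absolutely integrable on $\R^d\times\R^d$), I obtain
\begin{equation*}
    \|f\|_2^2 = \frac{1}{(2\pi)^d}\int_{\R^d} \widehat f(\omega)\,\overline{\widehat f(\omega)}\,\D\omega,
\end{equation*}
which is the desired identity on $\mathcal{S}(\R^d)$.

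Second, I would extend the identity to all of $L^2(\R^d)$ by a density argument. The previous step shows that the map $T:\mathcal{S}(\R^d)\to L^2(\R^d)$ defined by $Tf = (2\pi)^{-d/2}\widehat f$ is a linear isometry into $L^2$. Since $\mathcal{S}(\R^d)$ is dense in $L^2(\R^d)$, the bounded linear extension theorem produces a unique continuous linear extension $\overline T:L^2(\R^d)\to L^2(\R^d)$, which is again an isometry. The inner-product identity then extends from $\mathcal{S}\times\mathcal{S}$ to $L^2\times L^2$ by joint continuity of $\langle\cdot,\cdot\rangle$.

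The main obstacle is the compatibility of this $L^2$-extension $\overline T$ with the Fourier transform on $L^1$ originally given in \Cref{def:FT for L1}, which is how $\widehat f$ is defined in the theorem statement. To reconcile them, for $f\in L^1\cap L^2$ I would pick a sequence $f_n\in\mathcal{S}(\R^d)$ with $f_n\to f$ in both $L^1$ and $L^2$ (e.g.\ via truncation and mollification). Then $\widehat{f_n}\to\widehat f$ uniformly (since $\|\widehat{g}\|_\infty\le\|g\|_1$) while simultaneously $\widehat{f_n}=(2\pi)^{d/2} Tf_n\to (2\pi)^{d/2}\overline T f$ in $L^2$. Both limits must agree almost everywhere, so the two definitions coincide on $L^1\cap L^2$; since $L^1\cap L^2$ is dense in $L^2$, this identifies $\overline T$ with the $L^2$ Fourier transform used throughout the paper, completing the proof.
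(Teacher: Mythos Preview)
Your argument is correct and is precisely the standard textbook proof of Plancherel's theorem. The paper, however, does not prove this statement at all: it is listed under ``Technical preliminaries'' as a classical fact and simply stated without proof. So there is nothing to compare against; you have supplied a valid proof where the paper deliberately omits one. Your extra care in reconciling the $L^2$-extension with the $L^1$ definition from \Cref{def:FT for L1} is a nice touch that makes the argument self-contained within the paper's conventions.
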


\begin{lemma}\label{lem:log inverse}
    Suppose $t,x,y > 0$. Then there exist finite $t$-dependent constants $C_1,C_2$ such that 
    \begin{equation*}
        x \leq \ y \log(3+1/y)^t \implies \frac{x}{\log(3+1/x)^t} \leq C_1\ y \implies x \leq C_2\ y \log(3+1/y)^t. 
    \end{equation*}
\end{lemma}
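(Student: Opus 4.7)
\medskip

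\noindent\textbf{Proof plan.} Write $\phi(u)=\log(3+1/u)$, so $\phi$ is strictly decreasing on $(0,\infty)$ with range $(\log 3,\infty)$; in particular $\phi(u)>1$ for every $u>0$. The second implication is essentially free: if $x\leq y$ then $x\leq y\leq y\phi(y)^t$ since $\phi(y)^t\geq 1$, so $C_2=1$ works; if instead $x>y$ then $\phi(x)<\phi(y)$ by monotonicity, whence $x\leq C_1 y\phi(x)^t\leq C_1 y\phi(y)^t$ and $C_2=C_1$ suffices. So the bulk of the work is in the first implication.

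\medskip

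\noindent For the first implication, the key claim I would isolate is that there exists a $t$-dependent constant $c(t)\in(0,1]$ such that
\[
x\leq y\phi(y)^t\quad\Longrightarrow\quad \phi(x)\geq c(t)\,\phi(y).
\]
Given this claim, the conclusion is immediate: $x\leq y\phi(y)^t\leq y\,c(t)^{-t}\phi(x)^t$, giving $C_1=c(t)^{-t}$. Thus I only need to control the ratio $\phi(y)/\phi(x)$ under the hypothesis. The natural approach is a dichotomy on whether $y$ is bounded away from $0$ or not, measured in terms of $t$.

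\medskip

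\noindent The easy regime is when $\phi(y)$ is bounded by a $t$-dependent constant $M(t)$ (equivalently $y\geq y_0(t)$). Then $x\leq y\phi(y)^t\leq M(t)^t y$, and since $\phi(x)\geq\log 3$ always, we get $\phi(y)/\phi(x)\leq M(t)/\log 3$ directly. The substantive regime is when $y$ is small, where both $\phi(y)$ and $\phi(y)^t$ can be large. Here I would use the bounds $\log(1/y)\leq\phi(y)\leq\log 4+\log(1/y)$ valid for $y\leq 1$. Setting $u=\log(1/y)$, the hypothesis gives
\[
\phi(x)\geq \log(1/x)\geq \log(1/y)-t\log\phi(y)\geq u - t\log\bigl(\log 4+u\bigr),
\]
and the target $\phi(x)\geq \phi(y)/2$ reduces to $u-t\log(\log 4+u)\geq (\log 4+u)/2$, i.e.\ $u\geq \log 4 + 2t\log(\log 4+u)$, which holds as soon as $u\geq u_0(t)$ for some sufficiently large threshold (since $u$ dominates $\log u$). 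Thus for $y\leq e^{-u_0(t)}$ we get $\phi(y)/\phi(x)\leq 2$, and for $y>e^{-u_0(t)}$ we are in the easy regime with $M(t)=\log(3+e^{u_0(t)})$.

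\medskip

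\noindent The only subtlety, and what I would watch carefully, is that the two cases must cover all $y$ with a single choice of threshold $u_0(t)$, and that the elementary inequality $u\geq \log 4+2t\log(\log 4+u)$ genuinely holds for $u\geq u_0(t)$. This is routine but must be stated explicitly since $u_0(t)$ has to be chosen as a function of $t$ — for instance $u_0(t)=10t\log(10t)+10$ is comfortably sufficient. Combining the two cases yields $C_1=C_1(t)=\max\{M(t)/\log 3,2\}^t$, completing the first implication and hence the lemma.
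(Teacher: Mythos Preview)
Your proof is correct and follows essentially the same approach as the paper's: a dichotomy on whether $y$ is small or bounded away from zero, combined with the fact that $\log$ grows sub-polynomially in the small-$y$ regime. Your isolation of the key comparison $\phi(x)\geq c(t)\,\phi(y)$ is a clean way to organize the argument, and you give a full (and simpler) proof of the second implication, which the paper only sketches as ``analogous.''
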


\begin{proof}
    Let us focus on the first implication. If $x \leq y$, then it clearly holds. If $y \leq x \leq y\log(3+1/y)^t$ then it suffices to show
    \begin{align*}
        \frac{x}{\log(3+1/x)^t} \leq
        \frac{y\log(3+1/y)^t}{\log(3 + 1/(y\log(3+1/y)^t))^t} \stackrel{!}{\leq} C_1 y.
    \end{align*}
    The second inequality is equivalent to 
    \begin{equation*}
    3+1/y \leq (3+1/(y\log(3+1/y)^t))^{\sqrt[t]{C_1}}. 
    \end{equation*}
    Now, if $y\geq1/2$ then clearly taking $C_1 = \log_3(5)^t$ works. Suppose that instead $y\in(0,1/2)$. Then, since $\log$ grows slower than any polynomial, there exists a $t$-dependent constant $c_t<\infty$ such that $\log(3+1/y) \leq c_t y^{-1/(2t)}$ for all $y\in(0,1/2)$. Therefore, we have
    \begin{equation*}
        3 + \frac{1}{y\log(3+1/y)^t} \geq 3 + \frac{1}{c_t^ty^{1/2}}. 
    \end{equation*}
    It is then clear that 
    \begin{equation*}
        3+\frac1y \leq \left(3 + \frac{1}{c_t^ty^{1/2}}\right)^{\sqrt[t]{C_1}}
    \end{equation*}
    holds for all $y\in(0,1/2)$ if we take $C_1$ large enough in terms of $t$. The second implication follows analogously and we omit its proof. 
\end{proof}

\begin{lemma}\label{lem:1+2a integrability}
    Let $\mu$ be a probability distribution on $\R^d$ and $\gamma \in (0,2)$. Then 
    \begin{equation*}
        \E_{X\sim\mu} \int_{\R^d} \frac{(\cos\langle \w,X\rangle-1)^2 + \sin^2\langle\w,X\rangle}{\|\w\|^{d+\gamma}} \D\w \leq \frac{16\pi^{d/2}M_{\gamma}(\mu)}{\Gamma(d/2)\gamma(2-\gamma)}. 
    \end{equation*}
\end{lemma}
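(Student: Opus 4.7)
The plan is to reduce the estimate to a one-dimensional integral via rotational invariance and a scaling substitution, then control the remaining radial integral by a two-piece estimate that captures both the $\gamma \to 0$ and $\gamma \to 2$ divergences.

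First, I would simplify the numerator using the elementary identity
\[
(\cos\theta-1)^2+\sin^2\theta = 2(1-\cos\theta),
\]
so the target quantity becomes $2\,\E_X \int_{\R^d} \frac{1-\cos\langle\omega,X\rangle}{\|\omega\|^{d+\gamma}}\,\D\omega$. By Tonelli it suffices to bound the inner integral for each fixed $x$. For $x=0$ the integrand vanishes, so assume $x \neq 0$ and apply the substitution $\omega = u/\|x\|$ together with rotational invariance of $\|\omega\|$ to reduce to
\[
\int_{\R^d} \frac{1-\cos\langle\omega,x\rangle}{\|\omega\|^{d+\gamma}}\,\D\omega
= \|x\|^{\gamma}\int_{\R^d} \frac{1-\cos u_1}{\|u\|^{d+\gamma}}\,\D u.
\]

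Next I would pass to polar coordinates $u = r\theta$ with $\theta \in \mathbb{S}^{d-1}$, so that the integral factorizes as
\[
\int_{\mathbb{S}^{d-1}} |\theta_1|^{\gamma}\,\D\sigma(\theta) \cdot \int_0^{\infty} \frac{1-\cos s}{s^{\gamma+1}}\,\D s,
\]
after another scaling $s = r|\theta_1|$ in the radial integral. The angular integral is trivially bounded by $\vol_{d-1}(\mathbb{S}^{d-1}) = 2\pi^{d/2}/\Gamma(d/2)$. The radial integral is the only quantitative piece: split it at $s=1$ and use $1-\cos s \le s^2/2$ on $[0,1]$ and $1-\cos s \le 2$ on $[1,\infty)$ to obtain
\[
\int_0^{\infty} \frac{1-\cos s}{s^{\gamma+1}}\,\D s \le \frac{1}{2(2-\gamma)} + \frac{2}{\gamma} \le \frac{4}{\gamma(2-\gamma)}.
\]

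Multiplying the three bounds and the factor $2$ from the initial simplification gives $\frac{16\pi^{d/2}\|x\|^{\gamma}}{\Gamma(d/2)\gamma(2-\gamma)}$ pointwise in $x$; taking expectation in $X\sim\mu$ replaces $\|x\|^{\gamma}$ by $M_{\gamma}(\mu)$, which is exactly the claimed bound. The only conceptual subtlety is ensuring the two divergences as $\gamma \downarrow 0$ (tail) and $\gamma \uparrow 2$ (origin) are simultaneously captured, which is precisely why the radial split produces the $\gamma(2-\gamma)$ factor; everything else is a standard change of variables.
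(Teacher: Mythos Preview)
Your proof is correct and follows essentially the same strategy as the paper: bound the numerator, pass to polar coordinates, and split the radial integral to produce the $\gamma(2-\gamma)$ factor. The only cosmetic difference is that you use the exact identity $(\cos\theta-1)^2+\sin^2\theta=2(1-\cos\theta)$ together with a rotation and the substitution $s=r|\theta_1|$, whereas the paper uses the cruder bound $(\cos t-1)^2+\sin^2 t\le 4(t^2\wedge 1)$ combined with $|\langle\omega,X\rangle|\le\|\omega\|\|X\|$ and splits the radial integral at $r=\|X\|^{-1}$; both routes land on the same constant.
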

\begin{proof}
    We use the inequalities $(\cos t-1)^2 + \sin^2(t) \leq 4(t^2\land 1)$ valid for all $t\in\R$. Plugging in and using the Cauchy-Schwarz inequality, the quantity on the left hand side can be bounded as
    \begin{align*}
        4\E\int_{\R^d} \frac{1\land(\|\w\|^2\|X\|^2)}{\|\w\|^{d+\gamma}}\D\w &\leq 4\operatorname{vol}_{d-1}(\bb S^{d-1})\E \int_0^\infty \frac{1\land(r^2\|X\|^2)}{r^{1+\gamma}}\D r \\
        &= \frac{8\pi^{d/2}}{\Gamma(\frac d2)}\E\left\{\|X\|^2\int_0^{\|X\|^{-1}} \frac{1}{r^{\gamma-1}} \D r + \int_{\|X\|^{-1}}^\infty \frac{1}{r^{1+\gamma}}\D r\right\} \\
        &= \frac{16\pi^{d/2}M_\gamma(\nu)}{\Gamma(\frac d2)\gamma(2-\gamma)}, 
\end{align*}
where $\vol_{d-1}(\bb S^{d-1})={\frac {2\pi ^{d/2}}{\Gamma {\bigl (}{\frac {d}{2}}{\bigr )}}}$ is the surface area of the unit $(d-1)$-sphere.
\end{proof}

\begin{lemma}\label{lem:phi_a dct bounds}
For $\gamma \in (0,2)$ define  
\begin{align*}
    B_\gamma =
    \begin{cases}
        \sup_{0<a<c}\left|\int_a^c\frac{\sin(\w)}{\w}\D{\w}\right|
        &\text{if }\gamma=1,
        \\
        \sup_{0<a<c}\left|\int_a^c\frac{\cos(\w)}{\w^{(1+\gamma)/2}}\D{\w}\right|
        &\text{if }\gamma\in(0,1),
        \\
        \sup_{0<a<c}\left|\int_a^c\frac{\cos(\w)-1}{\w^{(1+\gamma)/2}}\D{\w}\right|
        &\text{if }\gamma\in(1,2).
    \end{cases}
\end{align*}
Then $B_\gamma < \infty$. 
\end{lemma}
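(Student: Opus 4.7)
\medskip

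\noindent\textbf{Proof proposal for \Cref{lem:phi_a dct bounds}.}
The plan is to handle each of the three cases separately by splitting the range of integration at $\omega=1$ and arguing that (i) the contribution from $(0,1]$ is controlled by absolute integrability, and (ii) the contribution from $[1,\infty)$ is controlled either by absolute integrability or by a single integration-by-parts to extract a cancellation.

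For $\gamma=1$, the result is classical: the Dirichlet integral $\int_0^\infty \sin(\omega)/\omega\,\d\omega=\pi/2$ converges, so the family of partial integrals $\{\int_a^c \sin(\omega)/\omega\,\d\omega\}_{0<a<c}$ is uniformly bounded by $\pi$. One could alternatively prove this from scratch by writing $\int_a^c = \int_a^1 + \int_1^c$, bounding the first piece by $\int_0^1 1\,\d\omega=1$ (using $|\sin(\omega)/\omega|\le 1$) and handling the second piece by integration by parts $u=1/\omega$, $\d v=\sin(\omega)\d\omega$, yielding boundary terms of size $O(1)$ and a remainder $\int_1^c \cos(\omega)/\omega^2\,\d\omega$ which is absolutely convergent.

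For $\gamma\in(0,1)$, the exponent $(1+\gamma)/2$ lies in $(1/2,1)$, so $\omega\mapsto\omega^{-(1+\gamma)/2}$ is integrable on $(0,1]$. Thus
\[
\left|\int_a^1 \frac{\cos(\omega)}{\omega^{(1+\gamma)/2}}\,\d\omega\right|\le \int_0^1\omega^{-(1+\gamma)/2}\,\d\omega=\frac{2}{1-\gamma}.
\]
On $[1,c]$ I would integrate by parts with $u=\omega^{-(1+\gamma)/2}$, $\d v=\cos(\omega)\,\d\omega$, giving
\[
\int_1^c \frac{\cos(\omega)}{\omega^{(1+\gamma)/2}}\,\d\omega
=\left[\frac{\sin(\omega)}{\omega^{(1+\gamma)/2}}\right]_1^c+\frac{1+\gamma}{2}\int_1^c\frac{\sin(\omega)}{\omega^{(3+\gamma)/2}}\,\d\omega,
\]
where the boundary terms are bounded by $2$ in absolute value and the remaining integral converges absolutely since $(3+\gamma)/2>1$. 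This yields a uniform bound on $B_\gamma$ independent of $a,c$.

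For $\gamma\in(1,2)$, the exponent $(1+\gamma)/2$ lies in $(1,3/2)$, so $\omega^{-(1+\gamma)/2}$ is \emph{not} integrable near $0$; here the cancellation in $\cos(\omega)-1$ is essential. On $(0,1]$ I would use the elementary bound $|\cos(\omega)-1|\le \omega^2/2$, giving
\[
\left|\int_a^1\frac{\cos(\omega)-1}{\omega^{(1+\gamma)/2}}\,\d\omega\right|\le \frac12\int_0^1 \omega^{(3-\gamma)/2}\,\d\omega=\frac{1}{5-\gamma},
\]
since $(3-\gamma)/2>-1$. On $[1,c]$ we have $|\cos(\omega)-1|\le 2$ and $(1+\gamma)/2>1$, so the integrand is absolutely integrable and
\[
\left|\int_1^c\frac{\cos(\omega)-1}{\omega^{(1+\gamma)/2}}\,\d\omega\right|\le 2\int_1^\infty \omega^{-(1+\gamma)/2}\,\d\omega=\frac{4}{\gamma-1}.
\]
Combining the two pieces in each case gives $B_\gamma<\infty$ throughout $(0,2)$. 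The only delicate point is the case $\gamma\in(0,1)$, where neither absolute integrability at $0$ alone nor at $\infty$ alone suffices, and one must combine them via the split at $\omega=1$ together with the integration-by-parts step at infinity.
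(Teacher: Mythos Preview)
Your proof is correct. For the case $\gamma\in(1,2)$ your argument is essentially identical to the paper's: bound $|\cos(\omega)-1|$ by $\min\{1,\omega^2/2\}$ and observe that the resulting integrand is absolutely integrable on $(0,\infty)$.

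For $\gamma\in(0,1]$, however, your route differs from the paper's. The paper does not integrate by parts; instead it uses a period-by-period decomposition, bounding
\[
\sup_{0<a<c}\left|\int_a^c f(\omega)\,\d\omega\right|\le \sum_{n\ge1}\left|\int_{2n\pi}^{(2n+2)\pi} f(\omega)\,\d\omega\right|+2\sup_{a<c<a+2\pi}\left|\int_a^c f(\omega)\,\d\omega\right|,
\]
and then exploits the sign change $\cos(\omega+\pi)=-\cos(\omega)$ (resp.\ $\sin$) within each period to show the period integrals decay like $n^{-(1+\gamma)}$ (resp.\ $n^{-2}$), hence are summable; the short-interval piece is handled by direct absolute bounds. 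Your approach---split at $\omega=1$, use absolute integrability near the origin, and a single integration by parts to gain one extra power of decay at infinity---is more streamlined and yields explicit constants with less bookkeeping. The paper's decomposition is a bit more hands-on but avoids any calculus beyond the triangle inequality once the cancellation over periods is isolated. Either technique is standard for showing conditional convergence of oscillatory integrals of this type.
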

\begin{proof}

    In the $\gamma>1$ case, one immediately has $B_\gamma\leq \int_0^\infty\left|\frac{\min\{1, \omega^2/2\}}{\w^{(1+\gamma)/2}}\right|\D{\w}<\infty.$ Now let us consider $\gamma\leq 1$.
    One has that $B_\gamma=\sup_{0<c_1< c_2}|I_\gamma(c_2)-I_\gamma(c_1)|\leq \sup_{c>0}2|I_\gamma(c)|$ where\begin{align*}
    I_\gamma(a) =
    \begin{cases}
        \int_1^a\frac{\sin(\w)}{\w}\D{\w}
        &\text{if }\gamma=1,
        \\
        \int_1^a\frac{\cos(\w)}{\w^{(1+\gamma)/2}}\D{\w}
        &\text{if }\gamma\in(0,1),
    \end{cases}
\end{align*}
Clearly $a\mapsto I_\gamma(a)$ is continuous on $(0,\infty)$ for each $\gamma \in (0,2)$. Moreover, $$|I_{\gamma}(a)|\leq |a-1|\cdot \max\{a^{-(\gamma+1)/2}, 1\}.$$Therefore, we only need to show that $\lim_{a\to \infty}|I_\gamma(a)|$ and $\lim_{a\to 0}|I_\gamma(a)|$ are both finite.
Let $g_\gamma(x)=x^{-(\gamma+1)/2}$ and $f_\gamma(x)=\sin(x)$ if $\gamma=1$ and $\cos(x)$ otherwise, so that we may write $I_\gamma(a)=\int_a^1f_\gamma(\w)g_\gamma(\w)\D\w$.
\begin{enumerate}
    \item For ${a\to \infty}$, since $g_\gamma(\infty)=0$ and $|\int_{1}^{a}f_\gamma(x)\D t|\leq 2$ is uniformly bounded, $\int_{1}^\infty f(\w)g(\w)\D\w$ converges to a finite value by Dirichlet's test for improper integrals \cite[page 391]{malik1992mathematical}.\footnote{Reference pointed out by user Siminore on math.stackexchange.com.}
    \item For $a\to 0$, the conclusion follows by the inequality $|\sin(\omega)|\leq\min\{|\omega|,1\}$ in the case $\gamma=1$, and by  $|I_\gamma(a)|\leq \int_{a}^1\w^{-(1+\gamma)/2}\d\w\leq \int_{0}^1\w^{-(1+\gamma)/2}\d\w=\frac{2}{1-\gamma}$ for $\gamma < 1$. 
\end{enumerate}
Therefore, $\sup_{a>0}|I_\gamma(a)|<\infty$ which concludes the proof.

\end{proof}

\begin{lemma}\label{lem:psi_a fourier transform stronger}
Let $ \int_0^\infty \cdot\,\D \w \eqdef \lim_{\epsilon\to0}\int_{1/\epsilon\geq \w \geq\epsilon}\cdot\,\D \w$ and recall the definition of $\psi_\gamma$ from \eqref{eqn:psi_gamma def}. Then, for $x\neq0$ the following hold:
    \begin{align*}
            \psi_\gamma(x) = C_{\psi_\gamma} \begin{cases} \int_0^\infty\frac{\sin(\w x)}{\omega}\D\omega + \frac\pi2&\text{for }\gamma=1, \\
            \int_0^\infty \frac{\cos(\w x)}{\omega^{(1+\gamma)/2}} \D\omega 
            &\text{for } \gamma\in(0,1), \\
            \int_0^\infty \frac{\cos(\w x)-1}{\omega^{(1+\gamma)/2}} \D\omega 
            &\text{for }\gamma\in(1,2), \end{cases}
    \end{align*}
    where 
    \begin{equation*}
        C_{\psi_\gamma} = \begin{cases}
        \left(\cos(\frac{\pi(\gamma-1)}{4})\Gamma(\frac{1-\gamma}{2})\right)^{-1}
        &\text{if }\gamma\neq1,
        \\ 
        \frac1\pi
        &\text{if }\gamma=1.
        \end{cases}
    \end{equation*}
\end{lemma}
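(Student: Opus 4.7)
The plan is to establish each of the three identities separately by reducing to a classical Mellin-type integral, and to use Lemma \ref{lem:phi_a dct bounds} (which provides uniform bounds on the truncated integrals from the paper's principal-value convention) to justify the exchange of the truncation limit with all subsequent analytic manipulations. The simplest case is $\gamma = 1$, which reduces directly to the Dirichlet integral $\int_0^\infty \sin(u)/u\, du = \pi/2$: substituting $u = \omega|x|$ and using the oddness of sine yields $\int_0^\infty \sin(\omega x)/\omega\, d\omega = (\pi/2)\sgn(x)$, which after multiplication by $C_{\psi_1} = 1/\pi$ equals $\one\{x > 0\} - (1/2)\one\{x \neq 0\}$ for $x \neq 0$.

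For $\gamma \in (0, 1)$, the key input is the classical identity
\[
\int_0^\infty \cos(u)\, u^{s-1}\, du = \Gamma(s)\cos(\pi s / 2), \qquad s \in (0, 1),
\]
proved by rotating the contour in $\int_0^\infty e^{-u} u^{s-1}\, du = \Gamma(s)$ onto the imaginary axis (with Jordan's lemma handling the quarter-circle arcs) and taking real parts. Setting $s = (1-\gamma)/2 \in (0, 1/2)$ and substituting $u = \omega|x|$ (using evenness of cosine) produces exactly $C_{\psi_\gamma}^{-1}|x|^{(\gamma-1)/2}$, once one observes $\cos(\pi(1-\gamma)/4) = \cos(\pi(\gamma-1)/4)$.

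For $\gamma \in (1, 2)$ the integrand $\omega^{-(1+\gamma)/2}$ fails to be integrable near zero, which is precisely why the numerator is regularized by subtracting $1$. The plan is to establish the regularized analogue
\[
\int_0^\infty (\cos(u) - 1)\, u^{s-1}\, du = \Gamma(s)\cos(\pi s / 2), \qquad s \in (-1, 0),
\]
by integration by parts, differentiating $\cos(u) - 1$ to $-\sin(u)$ and antidifferentiating $u^{s-1}$ to $u^s/s$: both boundary contributions vanish (at $0$ because $\cos(u) - 1 = O(u^2)$, at infinity because $s < 0$), reducing the claim to $\frac{1}{s}\int_0^\infty \sin(u)\, u^s\, du$, which evaluates via the Mellin identity $\int_0^\infty \sin(u)\, u^{s'-1}\, du = \Gamma(s')\sin(\pi s'/2)$ at $s' = s+1 \in (0, 1)$ together with $\Gamma(s+1) = s\,\Gamma(s)$ and $\sin(\pi(s+1)/2) = \cos(\pi s/2)$. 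Substituting $u = \omega|x|$ with $s = (1-\gamma)/2 \in (-1/2, 0)$ then closes this case. The main technical obstacle across all three cases is the careful commutation of the truncation limit with the contour-rotation and integration-by-parts arguments; this becomes routine once the uniform bounds of Lemma \ref{lem:phi_a dct bounds} are in hand, with the $\gamma \in (1, 2)$ subcase demanding the most attention because the $-1$ regularization is precisely the mechanism by which a divergent Mellin transform is converted into its analytic continuation.
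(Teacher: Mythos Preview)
Your proof is correct and follows essentially the same approach as the paper: the Dirichlet integral for $\gamma=1$, contour rotation (the paper calls it the residue theorem) to obtain the Mellin-type cosine identity for $\gamma\in(0,1)$, and integration by parts followed by the corresponding sine identity for $\gamma\in(1,2)$. The only cosmetic difference is that you state the abstract Mellin identities first and then substitute $u=\omega|x|$, whereas the paper substitutes first and then rotates the contour; your explicit appeal to Lemma~\ref{lem:phi_a dct bounds} to justify the principal-value limit is a welcome addition that the paper leaves implicit.
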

\begin{proof}
For $x\neq0$ clearly
    \begin{align*}
        \int_0^\infty \frac{\sin(\w x)}{w}\D{w}=\sgn(x)\, \int_0^\infty \frac{\sin(\w)}{w}\D{w} = \sgn(x)\frac{\pi}{2}, 
    \end{align*}
    which shows the first claim. Assume from here on without loss of generality that $x>0$. For $\gamma\in(0,1)$, by the residue theorem, 
    \begin{align*}
        \int_0^\infty \frac{\cos(\w x)}{\w^{(1+\gamma)/2}}\D{\w}
        &=x^{(\gamma-1)/2}\, \int_0^\infty \Re\left(
        \frac{e^{i\w}}{\w^{(1+\gamma)/2}}
        \right)
        \D{w} 
        \\
        &=x^{(\gamma-1)/2}\, \Re\left(ie^{-i\frac{\pi}{2}\gamma}\right)  \int_0^\infty \frac{e^{-z}}{z^{(1+\gamma)/2}}\D{z} 
        \\
        &=x^{(\gamma-1)/2}\, \cos\left(\frac{\pi(\gamma-1)}{4}\right) \Gamma((1-\gamma)/2).
    \end{align*}
    Similarly, for $\gamma\in(1,2)$, integration by parts and the residue theorem gives
    \begin{align*}
         \int_0^\infty \frac{\cos(\w x)-1}{\w^{(1+\gamma)/2}}\D{\w}
        &=x^{(\gamma-1)/2}\, \int_0^\infty
        (\cos(\w)-1)
        \D \left(\frac{-1}{((\gamma-1)/2) \,\w^{(\gamma-1)/2}}\right)
        \\
        &=-x^{(\gamma-1)/2} \int_0^\infty
        \frac{\sin(w)}{(\gamma-1)/2\,\w^{(\gamma-1)/2}}
        \D{\w}
        \\
        &=-x^{(\gamma-1)/2} \frac{2}{\gamma-1} \int_0^\infty\Im\left(
        \frac{e^{iw}}{\w^{(\gamma-1)/2}}\right)
        \D{\w}
        \\
        &=-x^{(\gamma-1)/2} \frac{2}{\gamma-1}
        \Im\left(ie^{-\frac{\pi}{2}(\gamma-1)/2}\right)\int_0^\infty
        \frac{e^{-z}}{z^{(\gamma-1)/2}}
        \D{z}
        \\
        &=-x^{(\gamma-1)/2} \frac{2}{\gamma-1}\cos\left(\frac{\pi(\gamma-1)}{4}\right)\Gamma(1-(\gamma-1)/2) \\
        &= x^{(\gamma-1)/2}\,\cos\left(\frac{\pi(\gamma-1)}{4}\right)\Gamma((1-\gamma)/2). 
    \end{align*}
\end{proof}

\begin{lemma}
\label{lem: Dbeta of GMM}
    Let $\phi$ be the probability density function of $\mc{N}(0,\sigma I_d)$ and write $\widehat{\phi}$ for its Fourier transform. Then, for any $\beta\geq 1$,
    \begin{align*}
        \| \widehat{\phi}(\w) \|\w\|^\beta \|_2^2=\frac{\pi^{d/2}}{\Gamma(d/2)\sigma^{2\beta+d}}\Gamma\left(\frac{2\beta+d}{2}\right) \leq \frac{5\pi^{d/2}}{\Gamma(d/2)\sigma^{2\beta+d}} \left(\frac{2\beta+d}{2e}\right)^{\frac{2\beta+d-1}{2}}.
    \end{align*}
\end{lemma}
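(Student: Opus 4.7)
The plan is to reduce the squared norm to a one-dimensional integral, evaluate it explicitly in terms of the Gamma function to obtain the claimed equality, and then apply a Stirling-type bound to obtain the claimed inequality.

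First I would use the standard fact that the Fourier transform of the density $\phi$ of $\mathcal{N}(0,\sigma^2 I_d)$ (which is the intended meaning of $\mathcal{N}(0,\sigma I_d)$ here, as dictated by the $\sigma$-dependence on the right-hand side) is $\widehat\phi(\omega) = \exp(-\sigma^2\|\omega\|^2/2)$, so that $|\widehat\phi(\omega)|^2\|\omega\|^{2\beta} = e^{-\sigma^2\|\omega\|^2}\|\omega\|^{2\beta}$ is radial. Passing to polar coordinates gives
\begin{equation*}
    \bigl\|\widehat\phi(\omega)\|\omega\|^\beta\bigr\|_2^2 = \mathrm{vol}_{d-1}(\mathbb{S}^{d-1})\int_0^\infty e^{-\sigma^2 r^2} r^{2\beta+d-1}\,dr,
\end{equation*}
which I would then evaluate via the substitution $u=\sigma^2 r^2$. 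This turns the radial integral into $(2\sigma^{2\beta+d})^{-1}\Gamma\bigl(\tfrac{2\beta+d}{2}\bigr)$, and combined with $\mathrm{vol}_{d-1}(\mathbb{S}^{d-1}) = 2\pi^{d/2}/\Gamma(d/2)$ this yields the stated equality.

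Next, for the inequality I would invoke a quantitative Stirling bound, for instance $\Gamma(x) \le \sqrt{2\pi/e}\, e^{1/(12x)} (x/e)^{x-1/2}$ valid for $x\ge 1$, applied at $x = (2\beta+d)/2$. Since $\beta\ge 1$ and $d\ge 1$ we have $x\ge 3/2$, so $e^{1/(12x)} \le e^{1/18}$, and the resulting overall prefactor $\sqrt{2\pi/e}\cdot e^{1/18}$ is comfortably below $5$. Dividing both sides of the equality by $\pi^{d/2}/(\Gamma(d/2)\sigma^{2\beta+d})$ and inserting this bound on the Gamma factor finishes the proof.

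There is no real obstacle here: the argument is a direct Fourier-plus-polar-coordinates computation combined with a crude Stirling estimate. The only mildly delicate point is tracking the powers of $\sigma$ correctly through the substitution, and being careful enough with the Stirling constant to see that $5$ (rather than something close to $\sqrt{2\pi/e}\approx 1.52$) suffices for all $\beta\ge 1$ and $d\ge 1$.
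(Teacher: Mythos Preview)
Your proposal is correct and follows essentially the same approach as the paper: compute $\widehat\phi(\omega)=e^{-\sigma^2\|\omega\|^2/2}$, pass to polar coordinates and reduce to a Gamma integral, then apply a Stirling-type bound $\Gamma(x)\le 5(x/e)^{x-1/2}$ for $x>1$. The only cosmetic difference is that the paper cites this Gamma bound from its own auxiliary lemma rather than deriving it from the sharper Stirling inequality you quote.
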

\begin{proof}
    It is well known that $\widehat{\phi}(\w)= e^{-\frac{\sigma^2}{2}\|\w\|^2}$. The claimed equality then follows from the formula for the $2\beta$'th moment of the Gaussian distribution with mean $0$ and variance $1/(2\sigma^2)$. The inequality follows by \Cref{lem:gamma fn}. 
 
\end{proof}

\begin{lemma}[Properties of the gamma function]\label{lem:gamma fn}
    For all $x>1$ the inequality $\Gamma(x) \leq 5(x/e)^{x-1/2}$ holds. 
\end{lemma}
\begin{proof}
    In \cite{minc1964some} authors showed that $\log\Gamma(x) \leq (x-\frac12)\log(x)-x+\frac12\log(2\pi)+1$ for all $x \geq 1$, from which the second claim follows as $\exp(\frac12\log(2\pi)+1/2)<5$.
\end{proof}

\begin{lemma}\label{lem:integral of sin}
    Let $b \geq 1$ and $|a| < b$. Then
    \begin{equation*}
        \int_0^r x^a |\sin(x)|^b\d x = O(r^{a+b})
    \end{equation*}
    as $r \to \infty$, where we hide constants depending on $a,b$. 
\end{lemma}
\begin{proof}
    Since we are only interested in the asymptotic behaviour as $r \to \infty$, assume without loss of generality that $r\geq1$. Then, we have
    \begin{equation*}
        \int_0^r x^a|\sin(x)|^b\d x = \underbrace{\int_0^1x^a|\sin(x)|^b\d x}_{I} + \underbrace{\int_1^r x^a|\sin(x)|^b\d x}_{II}. 
    \end{equation*}
    Using the inequality $|\sin(x)|\leq x$, we can bound the first term as
    \begin{equation*}
        I \leq \int_0^1 x^{a+b}\d x = \frac{1}{a+b+1} \leq 1 = O(r^{a+b}),  
    \end{equation*}
    since $a+b>0$. For the second term, we obtain
    \begin{equation*}
        II \leq \int_1^r x^a \d x = \begin{cases}\begin{rcases}
            \frac{r^{a+1}-1}{a+1} &\text{if }a\neq-1 \\
            \log(r) &\text{if }a=-1
        \end{rcases}\end{cases} = O(r^{a+b}), 
    \end{equation*}
    where the last step uses $a+b > 0$ and $b\geq1$.

\end{proof}

\begin{lemma}\label{lem: tail_int}
    Let $a,b,c\in\R$ with $b>0$ be constants. For all large enough $r$ one has
    $$\int_{r}^{\infty}x^a\exp\left(-\frac{bx}{\log^{2}(x+2)}\right)\d x<r^{-c}.$$
\end{lemma}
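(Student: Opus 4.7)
The plan is to exploit the fact that the exponent $bx/\log^2(x+2)$ grows faster than any constant multiple of $\log x$, so $\exp(-bx/\log^2(x+2))$ decays faster than any polynomial in $x$. This will reduce the problem to a routine tail estimate on a polynomial integrand.

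Concretely, I would first fix an integer $N$ larger than $\max(a+c+1,\,a+1)$. Since $x/\log^2(x+2) \to \infty$ as $x\to\infty$ while $\log x$ grows only logarithmically, there exists a threshold $x_0 = x_0(a,b,c)$ such that
\begin{equation*}
\frac{bx}{\log^2(x+2)} \;\geq\; N\log x \qquad \text{for all } x \geq x_0.
\end{equation*}
Exponentiating gives the pointwise bound $\exp(-bx/\log^2(x+2)) \leq x^{-N}$ on $[x_0,\infty)$, and hence for any $r \geq x_0$,
\begin{equation*}
\int_r^\infty x^a \exp\!\left(-\frac{bx}{\log^2(x+2)}\right) \d x \;\leq\; \int_r^\infty x^{a-N}\,\d x \;=\; \frac{r^{a-N+1}}{N-a-1},
\end{equation*}
where the denominator is positive because $N > a+1$.

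Since $N$ was chosen so that $a-N+1 < -c$, the right-hand side is $o(r^{-c})$ as $r\to\infty$, and in particular is strictly less than $r^{-c}$ once $r$ is sufficiently large (depending on $a,b,c$). There is no real obstacle in this argument; the only care needed is a clean choice of $N$ so that the resulting polynomial tail beats $r^{-c}$ with room to spare, and an elementary verification that $x/\log^2(x+2)$ eventually dominates any multiple of $\log x$.
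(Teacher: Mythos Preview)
Your proof is correct and follows essentially the same approach as the paper: both arguments use that $\exp(-bx/\log^2(x+2))$ eventually dominates any power $x^{-N}$, reduce to the polynomial tail $\int_r^\infty x^{a-N}\,\d x$, and choose $N$ large enough that the resulting exponent beats $-c$. The only cosmetic difference is that the paper fixes the specific exponent $N=a+c+2$ (after reducing to $c\geq 0$), whereas you leave $N$ abstract.
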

\begin{proof}
    Assume, without loss of generality, that $c\geq0$. For all large enough $x$ one has $$\exp\left(-\frac{bx}{\log^{2}(x+2)}\right)<x^{-a-c-2}.$$Therefore, for large enough $r$, 
    $$\int_{r}^{\infty}x^a\exp\left(-\frac{bx}{\log^{2}(x+2)}\right)\d x<\int_{r}^{\infty}x^{-c-2}\d x\asymp r^{-c-1}<r^{-c}.$$
\end{proof}

\begin{lemma}\label{lem: Bessel}
Let $J_{\nu}$ be the Bessel function of the first kind of order $\nu$.
\begin{enumerate}
    \item\label{item:Bessel sphere} For all $x\in\R^d$, $$\int_{\R^d} e^{i\langle x,v\rangle} \D\sigma(v) = (2\pi)^{d/2}\|x\|^{1-d/2}J_{d/2-1}(\|x\|).$$ 
    \item\label{item:Bessel asymp} For any $\nu \in \R$, as $x\to\infty$
     \begin{equation}\label{eqn:bessel at infty}
     J_{\nu}(x)
                =\sqrt{\frac{2}{\pi x}}
                \cos\left(x-{\frac {\nu\pi}{2}}-\frac{\pi }{4}\right)+O(x^{-3/2}).
                \end{equation}
    \item\label{item:Bessel ball} For all $x\in\R^d$, $$\int_{\bb{B}^d(0,1)} e^{i\langle x,w\rangle} \d w = \left(\frac{2\pi}{\|x\|}\right)^{d/2} J_{d/2}(\|x\|).$$
\end{enumerate}
\end{lemma}
\begin{proof}
    For \Cref{item:Bessel sphere} set $r=\|x\|$ and $s=(2\pi)^{-1}$ in the calculation on page 154 of \cite{stein1971introduction}.

    For \Cref{item:Bessel asymp} see \cite[Eq. (1) in Section 7.21]{watson1995treatise}. 
    
    For \Cref{item:Bessel ball}, we can compute
    \begin{align}\label{eqn:fourier transform of ball}
        \int_{\|w\|\leq1} e^{i\langle x,w\rangle} \d w &= \int_{-1}^1 e^{i\|x\|w_1} \int_{w_2^2 + \dots + w_d^2 \leq 1-w_1^2} \d w_2 \dots \d w_d \d w_1 \nonumber\\
        &= \frac{\pi^{(d-1)/2}}{\Gamma(\frac{d+1}{2})} \int_{-1}^1 e^{i\|x\|w_1} (1-w_1^2)^{(d-1)/2} \d w_1. 
    \end{align}
    
    Recall from \cite[Section 3.1]{watson1995treatise} the definition of the Bessel function of the first kind as
    \begin{equation*}
        J_\nu(x) = \frac{(x/2)^\nu}{\Gamma(\nu+\frac12)\Gamma(\frac12)} \int_0^\infty \cos(x\cos(\theta))\sin^{2\nu}(\theta)\d\theta
    \end{equation*}
    valid for $\nu > -1/2$; the above is also known as the Poisson representation. Changing variables to $u=\cos(\theta)$, we see that it is equal to 
    \begin{align}\label{eqn:poisson representation}
        J_\nu(x) &= \frac{(x/2)^\nu}{\Gamma(\nu+\frac12)\Gamma(\frac12)} \int_{-1}^1 e^{ixu}(1-u^2)^{\nu-1/2}\d u. 
    \end{align}
    Comparing \eqref{eqn:poisson representation} with \eqref{eqn:fourier transform of ball} concludes the proof. 
\end{proof}

\begin{lemma}\label{lem:first_h}
    There exists a radial function $h_0 \in L^2(\R^d)$ such that
    \begin{align*}
        &\supp(h_0) \subseteq \mbb{B}(0,1), &&   \\
        &|\widehat{h}_0(w)| \leq C \exp \left( -\frac{c \|w\|}{\log(\|w\|+2)^{2}} \right) && \quad \text{for all } w \in \R^d, \\
        &|\widehat{h}_0(w)| \geq \frac{1}{2} && \quad \text{for all }\|w\| \leq r_{\min},
    \end{align*}
    where $C, c, r_{\min} > 0$.
\end{lemma}

\begin{proof}
   Apply Theorem 1.4 in \cite{cohen2023fractal} using the spherically symmetric weight function $u : \R^d \to \R_{\leq 0}$ defined by
    \begin{equation*}
        u(w) = u(\|w\|) = - \frac{\|w\|}{\log(\|w\|+2)^2} \left(\frac{(\|w\|-2)_+}{\|w\|+2}\right)^4,
    \end{equation*}
    where $(a)_+ := \max(a, 0)$ for $a \in \R$. 
\end{proof}

\section{Proof of Theorem \ref{prop:slice equiv form}}

For $v\in\bb S^{d-1}$ and $b\in\R$ let $\theta_v(x) = \langle v,x\rangle$ and write $\eta_v\eqdef \theta_v\#(\mu-\nu)$ for the pushforward of the measure $\mu-\nu$ through the map $\theta_v$. To start with, we notice that
\begin{equation}\label{eq: d_a in theta}
\begin{aligned}
   &\int_{\bb S^{d-1}}\int_\R \Big[\E\psi_\gamma(\langle X,v\rangle-b)-\E\psi_\gamma(\langle Y,v\rangle-b)\Big]^2\D b\D\sigma(v)
   \\
    =&\int_{\bb S^{d-1}} \int_\R 
    \left(
        \int_\R
        \psi_\gamma(x-b)
        \D \eta_v(x)
    \right)^2
    \D b\D\sigma(v), 
\end{aligned}
\end{equation}
For each $v\in\bb S^{d-1}$, the measure $\eta_v$ has at most countably many atoms, therefore $b\mapsto \eta_v(\{b\}) = 0$ $\operatorname{Leb}$-almost everywhere. Then, by Tonelli's theorem we can conclude that $\eta_v(\{b\})=0$ for $\sigma\otimes\operatorname{Leb}$-almost every $(v,b)$, thus going forward we can focus on the case $x\neq b$. By \Cref{lem:psi_a fourier transform stronger}, and writing $A_\epsilon = [\epsilon, 1/\epsilon]$ for $\epsilon > 0$, we have
\begin{align*}
    \int_\R
    \psi_\gamma(x-b)
    \D\eta_v(x) =
    C_{\psi_\gamma} \int_\R \lim_{\eps\to0} \int_{A_\eps} \begin{rcases}\begin{dcases}
        \frac{\sin(\w(x-b))}{\omega}
        &\text{if }\gamma=1\\
        \frac{\cos(\w(x-b))}{\omega^{(1+\gamma)/2}}
        &\text{if }\gamma\in(0,1)\\
        \frac{\cos(\w(x-b))-1}{\omega^{(1+\gamma)/2}} 
        &\text{if }\gamma\in(1,2)
    \end{dcases}\end{rcases} \D\omega
        \D \eta_v(x) . 
\end{align*}
Note that in the $\gamma=1$ case we implicitly used that $\int \d\eta_v(x)=0$. To exchange the integral over $x$ and the limit over $\eps$, notice that for any $\eps>0$ and $x\neq b\in\R$,
\begin{align*}
        &\left|
        \int_{\eps}^{1/\eps}
        \frac{\sin(\w(x-b))}{\omega} 
        \D\omega
        \right|
        \leq
        B_\gamma
        &&\text{if }\gamma=1,
        \\
        &\left|
        \int_{\eps}^{1/\eps}
        \frac{\cos(\w(x-b))}{\omega^{(1+\gamma)/2}} \D\omega
        \right|
        \leq
        B_\gamma|x-b|^{(\gamma-1)/2}
        &&\text{if }\gamma\in(0,1),
        \\
        &\left|
        \int_{\eps}^{1/\eps}
        \frac{\cos(\w(x-b))-1}{\omega^{(1+\gamma)/2}} \D\omega
        \right|
        \leq
        B_\gamma|x-b|^{(\gamma-1)/2}
        &&\text{if }\gamma\in(1,2).
\end{align*}
where $B_\gamma < \infty$ depends only on $\gamma$ and is defined in \Cref{lem:phi_a dct bounds}. We now show that $\int_\R |x-b|^{(\gamma-1)/2} \D |\eta_v|(x)<\infty$ for $\sigma\otimes\operatorname{Leb}$-almost every $b,v$. To this end, let $S = \{(b,v)\in\R\times\bb S^{d-1}:\int_\R|x-b|^{(\gamma-1)/2} \D|\eta_v|(x) = \infty\}$ and assume for contradiction $(\sigma\otimes\operatorname{Leb})(S) > 0$. Then $\one_{([-B,B]\times\bb S^{d-1})\cap S} \uparrow \one_S$ as $B\to\infty$, and thus by the monotone convergence theorem there exists a finite $B$ such that $\operatorname{Leb}(([-B,B]\times\bb S^{d-1})\cap S) > 0$. However, by Tonelli's theorem we have
\begin{align*}
    \int_{-B}^B \left(\int_\R |x-b|^{(\gamma-1)/2}\D|\eta_v|(x)\right)^2\D b &\leq \int_{-B}^B \int_\R |x-b|^{\gamma-1} \D|\eta_v|(x)\D b \\
    &\leq 2\int_\R \int_0^{B+|x|} b^{\gamma-1} \D b \D|\eta_v|(x) \\
    &\lesssim \int_\R (B+|x|)^{\gamma} \D |\eta_v|(x) \\
    &\lesssim B^{\gamma} + \E_{X\sim\mu}\left[|\langle v,X\rangle|^{\gamma}\right] + \E_{Y\sim\nu}\left[|\langle v,Y\rangle|^{\gamma}\right] \\
    &\leq B^{\gamma} + M_\gamma(\mu+\nu), 
\end{align*}
which, after integration over $v\in \bb S^{d-1}$, leads to a contradiction if $M_{\gamma}(\mu+\nu)<\infty$. Continuing under the assumption $M_{\gamma}(\mu+\nu)<\infty$, we can apply the dominated convergence theorem to obtain
\begin{align*}
    \int_\R
    \psi_\gamma(x-b)
    \D\eta_v(x) =
    C_{\psi_\gamma} \lim_{\eps\to0} \int_\R  \int_{A_\eps} \begin{rcases}\begin{dcases}
        \frac{\sin(\w(x-b))}{\omega}
        &\text{if }\gamma=1\\
        \frac{\cos(\w(x-b))}{\omega^{(1+\gamma)/2}}
        &\text{if }\gamma\in(0,1)\\
        \frac{\cos(\w(x-b))-1}{\omega^{(1+\gamma)/2}} 
        &\text{if }\gamma\in(1,2)
    \end{dcases}\end{rcases} \D\omega
        \D \eta_v(x) . 
\end{align*}
Then by Fubini's theorem, we exchange the order of integration to get
\begin{align*}
    \int_\R
    \psi_\gamma(x-b)
    \D\eta_v(x) =
    C_{\psi_\gamma} \lim_{\eps\to0}  \int_{A_\eps}\int_\R  \begin{rcases}\begin{dcases}
        \frac{\sin(\w(x-b))}{\omega}
        &\text{if }\gamma=1\\
        \frac{\cos(\w(x-b))}{\omega^{(1+\gamma)/2}}
        &\text{if }\gamma\in(0,1)\\
        \frac{\cos(\w(x-b))-1}{\omega^{(1+\gamma)/2}} 
        &\text{if }\gamma\in(1,2)
    \end{dcases}\end{rcases} 
        \D \eta_v(x) \D\omega. 
\end{align*}
Notice that $\int_\R e^{-i\w x} \D\eta_v(x)=\widehat{\eta}_v(\w)$, $\widehat{\eta}_v(\w)=\overline{\widehat{\eta}_v(-\w)}$ and $\widehat\eta_v(0)=0$, 
\begin{equation*}
\begin{aligned}
    \int_\R
    \psi_\gamma(x-b)
    \D \eta_v(x) &= C_{\psi_\gamma} \lim_{\eps\to0}  \int_{A_\eps} \frac{1}{\omega^{(1+\gamma)/2}}  
    \begin{rcases}\begin{dcases}
            \Im(e^{-i\w b}\overline{\widehat{\eta}_v(\w)}) 
        &\text{if }\gamma=1 \\
        \Re(e^{-i\w b}\overline{\widehat{\eta}_v(\w)}) 
        &\text{if }\gamma\neq 1
    \end{dcases}\end{rcases} \D\omega
    \\
    &= C_{\psi_\gamma} \lim_{\eps\to0} \begin{cases}
        \Im\left(\widehat{\Psi}_{\gamma,v,\eps}(b)\right) &\text{if } \gamma=1 \\ \Re\left(\widehat{\Psi}_{\gamma,v,\eps}(b)\right) &\text{if } \gamma\neq1.
    \end{cases}
\end{aligned}
\end{equation*}
where we write $$\Psi_{\gamma,v,\eps}(\omega) = \frac{  \overline{\widehat{\eta}_v(\omega)}} {\omega^{(1+\gamma)/2}} \one\{\omega\in A_\eps\}.$$ 
Notice that $\Psi_{\gamma,v,\eps}$ is bounded and compactly supported and thus lies in $L^p(\R)$ for any $p$, and so in particular
\begin{align*}
    \Psi_{\gamma,v,\eps}
    \in L^1(\R) \cap L^2(\R),
\end{align*}
which ensures that $$\widehat{\Psi}_{\gamma,v,\eps}\in L^\infty(\R) \cap L^2(\R).$$ 
Finally, let us write $$\Psi_{\gamma,v}(\omega) = \lim_{\epsilon\to0}\Psi_{\gamma,v,\eps}(\omega) = \frac{\overline{\widehat{\eta}_v(\omega)}}{\omega^{(1+\gamma)/2}}\one\{\omega>0\}$$ for every $\w$. We now show that $\Psi_{\gamma,v} \in L^2(\R)$ provided $M_{\gamma}(\mu+\nu)<\infty$, which is assumed throughout. Let $(X,Y)\sim\mu\otimes\nu$. We have
\begin{align*}
    \int_\R |\Psi_{\gamma,v}(\w)|^2 \D\w &= \int_0^\infty \frac{|\widehat\eta_v(\w)|^2}{w^{1+\gamma}} \D w \\
    &= \int_0^\infty \frac{(\E[\cos\langle\w,X\rangle-\cos\langle\w,Y\rangle])^2+(\E[\sin\langle \w,X\rangle-\sin\langle\w,Y\rangle])^2}{\w^{1+\gamma}}\D\w.
\end{align*}
Using the inequality $(a-b)^2\leq2(a-1)^2+(b-1)^2,\,\forall\,a,b\in\R$ for the $\cos$ term, the inequality $(a+b)\leq2a^2+2b^2,\,\forall\,a,b\in\R$ for the $\sin$ term, and applying Jensen's inequality to take the expectation outside, we can conclude that $\Psi_{\gamma,v}\in L^2(\R)$ by \Cref{lem:1+2a integrability}. Thus, by the dominated convergence theorem
\begin{align*}
    \left\|
    \Psi_{\gamma,v,\epsilon}-\Psi_{\gamma,v}\right\|_2 \to 0
\end{align*}
as $\epsilon \to 0$. Then, by Parseval's identity
\begin{align}
\label{eq: L2 phi}
    \left\|
    \widehat{\Psi}_{\gamma,v,\epsilon}-\widehat{\Psi}_{\gamma,v}\right\|_2 \to 0
\end{align}
as $\epsilon\to0$. It is well know that convergence in $L^2(\R)$ implies that there exists a subsequence $\{\eps_n\}_{n=1}^\infty$ with $\eps_n\to 0$ and $\widehat{\Psi}_{\gamma,v,\epsilon}\to \widehat{\Psi}_{\gamma,v}$ almost everywhere.\footnote{We could also conclude this by Carleson’s theorem.} Therefore, by passing to this subsequence, it follows that 
\begin{align*}
    \int_\R
    \psi_\gamma(x-b)
    \D \eta_v(x)
    =
    C_{\psi_\gamma}\begin{cases} \Im\left(\widehat{\Psi}_{\gamma,v}(b)\right) &\text{if }\gamma=1 \\ \Re\left(\widehat{\Psi}_{\gamma,v}(b)\right) &\text{if }\gamma\neq1\end{cases}
\end{align*}
for $\sigma\otimes\operatorname{Leb}$-almost every $(b,v)\in\R\times\bb S^{d-1}$. 
Note that since $\eta_v(\w)\in\R$,
\begin{align}
    \Re\left(\widehat{\Psi}_{\gamma,v}(b)\right) 
    &= \frac{\widehat{\Psi}_{\gamma,v}(b)+\overline{\widehat{\Psi}_{\gamma,v}(b)}}{2} \nonumber
    \\
    &= \frac12 \int_0^\infty  \left(\frac{\widehat{\eta}_v(\omega)}{\omega^{(1+\gamma)/2}}e^{ib\w} +\frac{\widehat{\eta}_v(-\omega)}{\omega^{(1+\gamma)/2}}e^{-ib\w} \right) \d\w\nonumber
    \\
    &= \frac12 \int_{-\infty}^\infty  \frac{\widehat{\eta}_v(\omega)\operatorname{sign}(\omega)}{|\omega|^{(1+\gamma)/2}}e^{ib\w}  \d\w\nonumber
    \\
    &= \mc{F}\left[ \frac{\widehat{\eta}_v(\omega)\operatorname{sign}(\omega)}{2|\omega|^{(1+\gamma)/2}} \right](-b),\label{eqn:Re Psi_g,v final}
\end{align}
\begin{align}
    \Im\left(\widehat{\Psi}_{\gamma,v}(b)\right) 
    &= \frac{\widehat{\Psi}_{\gamma,v}(b)-\overline{\widehat{\Psi}_{\gamma,v}(b)}}{2i} \nonumber
    \\
    &= \frac1{2i} \int_0^\infty  \left(\frac{\overline{\widehat{\eta}_v(\omega)}}{\omega^{(1+\gamma)/2}}e^{-ib\w} -\frac{\overline{\widehat{\eta}_v(-\omega)}}{\omega^{(1+\gamma)/2}}e^{ib\w} \right) \d\w\nonumber
    \\
    &= \frac1{2i} \int_{-\infty}^\infty  \frac{\overline{\widehat{\eta}_v(\omega)}}{|\omega|^{(1+\gamma)/2}} \sign(\w)e^{ib\w}  \d\w\nonumber
    \\
    &= \mc{F}\left[ \frac{\overline{\widehat{\eta}_v(\omega)}\sign(\w)}{2i|\omega|^{(1+\gamma)/2}} \right](-b).\label{eqn:Im Psi_g,v final}
\end{align}

Plugging \eqref{eqn:Re Psi_g,v final} and \eqref{eqn:Im Psi_g,v final} into \eqref{eq: d_a in theta}, by Parseval's identity (implicitly using that $\Psi_{\gamma,v} \in L^2(\R)$), we obtain
\begin{align*}
    \int_{\bb S^{d-1}}\int_\R \Big[\E\psi_\gamma(\langle X,v\rangle-b)-\E\psi_\gamma(\langle Y,v\rangle-b)\Big]^2\D b\D\sigma(v)
    & =\int_{\bb S^{d-1}} \int_\R 
    \left(
        \int_\R
        \psi_\gamma(x-b)
        \D \eta_v(x)
    \right)^2
    \D b\D\sigma(v), \\
    &= 2\pi C_{\psi_\gamma}^2\int_{\bb S^{d-1}}\int_\R 
    \frac{|\widehat{\eta}_v(\omega)|^2}{4|\omega|^{1+\gamma}}
    \D w\D\sigma(v) \\
    &= \pi C_{\psi_\gamma}^2\int_{\bb S^{d-1}}\int_0^{\infty}
    \frac{|\widehat{\eta}_v(\omega)|^2}{|\omega|^{1+\gamma}}
    \D w\D\sigma(v) \\
    &= \pi C_{\psi_\gamma}^2 \int_{\R^d} \frac{|\cal F[\mu-\cal \nu](\omega)|^2}{\|\w\|^{d+\gamma}} \D\omega, 
\end{align*}
where the last step uses a polar change of variable. The result follows after comparing with \Cref{prop:norm equiv form}.

\section{Proof of Proposition \ref{prop:riesz}}
    Let $\cal{S}(\R^d)$ be the Schwartz space and $\cal{S}'(\R^d)$ be the space of all tempered distributions on $\R^d$.
    Let $\tau=\mu-\nu$ and $s=(d+\gamma)/2$. First, note that $$\int \frac{K_s(x)\d{x}}{(1+\|x\|^2)^d}<\infty,$$so by \cite[Theorem 0.10]{landkof1972foundations} we have $K_s\in \cal{S}'(\R^d)$.
    By \cite[Theorem 0.12]{landkof1972foundations}, since $K_s\in \mc{S}'(\R^d)$ and $\tau$ has compact support, 
    $$ \widehat{I_s f}=\widehat{K_{s} * \tau} =  \widehat{K_{s}} \widehat{\tau}.$$
    By Plancherel's identity, 
    $$(2\pi)^{\frac{d}{2}}\|I_s \tau\|_2 = \|\widehat{I_s \tau}\|_2=\|\widehat{K_{s}}  \widehat{\tau}\|_2 = \frac{1}{\sqrt{F_\gamma(d)}} \cal{E}_\gamma(\mu,\nu),$$
    where the last equality follows from \Cref{prop:norm equiv form}.

\section{Proof of Theorem \ref{thm:tightness} and Proposition \ref{prop:d_H tightness}}\label{sec: lowers}
In this section we prove both \Cref{thm:tightness} and \Cref{prop:d_H tightness}. To do so, we give two constructions. The first one, presented in \Cref{sec:1d lower construction}, only applies in one dimension and gives optimal results. The second construction is given in \Cref{sec: lower log construction} applies in all dimensions, but loses a polylogarithmic factor. 

\textbf{Notation:} Abusing notation, in what follows we write $\cal E_\gamma(f,g)$ and $\overline{d_H}(f,g)$ even when $f$ and $g$ are not necessarily probability measures or probability densities. We will also write $\|f\|_{t,2} = \|\|\cdot\|^t\widehat f\|_2$ for potentially negative exponents $t\in\R$. Note that $\cal{E}_\gamma(f,0)=\sqrt{F_\gamma(d)}\|\widehat f\|_{-\frac{d+\gamma}{2},2}$.

\subsection{The Case $d=1$}\label{sec:1d lower construction}
The Lemma below constructs the \emph{difference} of two densities that has favorable properties. 
\begin{lemma}\label{lem: d=1 constr}
Let $f(x) = 1\{|x|\le\pi\} \sin(rx)$ with $r\in\Z$ and write $f_\beta = f*\cdots*f$ for $f$ convolved with itself $\beta-1$ times, i.e. $f_1 = f, f_2=f*f$ and so on. Fix an integer $\beta\geq1$ and let $|t| < \beta$. We have
\begin{align}\label{eq:hb1}
    \|f_\beta\|_{t,2} \asymp r^t, 
    \|f_\beta\|_1 \asymp 1,
    \text{ and }
    \overline{d_H}(f_\beta,0) \asymp \frac{1}{r}, 
\end{align}
as $r\to\infty$ where the constants may depend on $\beta,t$. 
\end{lemma}
\begin{proof} 
The intuition for the estimates~\eqref{eq:hb1} is simple: most of the energy of $f$ (and hence
$f_\beta$) is at frequencies around $|\omega|\approx r$ and thus differentiating $t$ times
boosts the $L_2$-energy by $r^t$. A simple computation shows $\widehat f(\omega) = c {(-1)^r\over i} {r\over \omega^2 - r^2} \sin(\omega
\pi)$. Note that because $r \in  \Z$ we have $\|\widehat f\| \asymp \|\widehat f_\beta\| \asymp
1$.

\textbf{Estimating $\|f_\beta\|_{t,2}$.} By definition we have
$$ \|f_{\beta}\|^2_{t,2} \asymp \int_{0}^\infty |\widehat f(\omega)|^{2\beta} \omega^{2t} \,\d\omega\asymp \int_0^\infty {r^{2\beta}\over (\omega^2 -
r^2)^{2\beta}} \omega^{2t} \sin^{2\beta}(\omega \pi)\d\omega.$$
We decompose the integral into three regimes:
\begin{enumerate}
\item \underline{$\omega < r/2$}: here $(\omega^2 - r^2) \asymp r^2$ and thus 
\begin{align*}
\int_0^{r/2}(\dots) 
 \asymp r^{-2\beta} \int_0^{r/2} \omega^{2t} \sin^{2\beta} (\omega \pi)
        \lesssim  r^{2t}
\end{align*}
by \Cref{lem:integral of sin}. 
\item \underline{$\omega > 3r/2$}: here $(\omega^2 - r^2) \asymp \omega^2$ and thus
	$$ 
	\int_{3r/2}^{\infty}(\dots) \asymp r^{2\beta} \int_{3r/2}^{\infty} {\sin^{2\beta} (\omega
	\pi)\omega^{2t}\over
	\omega^{4\beta}} \asymp r^{2\beta} r^{2t-4\beta+1} = r^{1+2t-2\beta}\ll r^{2t}.
	$$
\item \underline{$\omega \in[ r/2, 3r/2]$}: here $(\omega^2 - r^2) \asymp yr$, where $y=\omega - r$. Note
also $\sin(\omega \pi) = \sin(r\pi + y\pi) = (-1)^r\sin(y\pi)$, and $\omega \asymp r$. Thus
	$$ 
	\int_{r/2}^{3r/2} (\dots)\D\omega = \int_{-r/2}^{r/2} (\dots)\d y  \asymp r^{2\beta} \int_{-r/2}^{r/2}
	{\sin^{2\beta} (y \pi) r^{2t}\over
	(yr)^{2\beta}} \D y\asymp r^{2t} \int_\R \left(\frac{\sin(y\pi)}{y}\right)^{2\beta} \D y \asymp r^{2t}. 
	$$
	where the last inequality follows by that the integrand is bounded at $0$ and
	has $y^{-2\beta}\lesssim y^{-2}$ tail. 
\end{enumerate}

\textbf{Estimating $\|f_\beta\|_1$.} Follows from $\|f_\beta\|_1\lesssim \|f_\beta\|_2 \asymp 1$ by the Cauchy-Schwartz inequality and $\|f_\beta\|_1\geq \|\widehat{f_\beta}\|_\infty\asymp 1$ by the Hausdorff–Young inequality.

\textbf{Estimating $\overline{d_H}$.} We get $\overline{d_H}(f_\beta,0)\gtrsim \cal{E}_1(f_\beta,0)\asymp\|f_\beta\|_{-1,2}\asymp\frac1r$ from the first estimate. For the upper bound, note that $\widehat{\sign(x)}=\frac{2}{i\w}$ and $\overline{d_H}(f_\beta,0)=\sup_{b}\frac12 \int f_\beta(x)\sign(x-b)\d{x}$, so by Plancherel's identity, 
\begin{align*}
    \overline{d_H}(f_\beta,0)
    \lesssim \sup_{b} \int \left|\widehat{f_\beta}(\w)\frac{e^{ib\w}}{\w}\right| \d{\w}
    \lesssim \int_0^\infty {r^{\beta}\over (\omega^2 - r^2)^{\beta}} \omega^{-1} \sin^{\beta}(\omega \pi). 
\end{align*}
The fact that the above is $O(1/r)$ follows analogously to the proof of our bound on $\|f_\beta\|_{t,2}$ so we omit it. This concludes our proof. 

\end{proof}

\begin{proof}[Proof of \Cref{thm:tightness} and \Cref{prop:d_H tightness} for $d=1$] We now turn to showing   tightness in one dimension, utilizing the density difference constructed in \Cref{lem: d=1 constr}. Given a value of the smoothness $\beta > 0$, set $\overline\beta=\lceil\beta\rceil+1$ and let $f_{\overline\beta}$ be as in \Cref{lem: d=1 constr} with $r=\epsilon^{-1/\beta}$ for some $\epsilon \in (0,1)$. Let $p_0$ be a smooth, compactly supported density with $\inf_{x\in[-\pi,\pi]}p_0(x) > 0$. Define
\begin{equation*}
    p_\epsilon(x) = p_0(x) + \epsilon f_{\overline\beta}(\overline\beta x)/2\qquad\text{and}\qquad q_\epsilon(x) = p_0(x) -\epsilon f_{\overline\beta}(\overline\beta x)/2.
\end{equation*}
Clearly both $p_\epsilon, q_\epsilon$ are compactly supported probability densities for sufficiently small $\epsilon$, since $\|f_{\overline\beta}\|_\infty<\infty$ and is supported on $[-\overline\beta\pi,\overline\beta\pi]$. By \Cref{lem: d=1 constr}, for each $\gamma \in (0,2)$ the two densities satisfy
\begin{equation*}
    \|p_\epsilon-q_\epsilon\|_1\asymp \epsilon,\,\|p_\epsilon\|_{\beta,2} \asymp \|q_\epsilon\|_{\beta,2} \asymp 1,\,
    \cal{E}_\gamma(p_\epsilon, q_\epsilon) \asymp \|p_\epsilon-q_\epsilon\|_{-(1+\gamma)/2,2} \asymp \epsilon^{\frac{2\beta+\gamma+1}{2\beta}}, 
    \overline{d_H}(p_\epsilon, q_\epsilon) \asymp \epsilon^{\frac{\beta+1}{\beta}}. 
\end{equation*}
This proves both \Cref{thm:tightness} and \Cref{prop:d_H tightness} for $d=1$.
\end{proof}

\subsection{The Case $d>1$}\label{sec: lower log construction}
We move on to the case of general dimension. In \Cref{ssec:lower bd overview} we outline our approach. Then, in \Cref{ssec:lower construction} we give full details of our construction, following the argument outlined in the prior section. 
\subsubsection{Overview}\label{ssec:lower bd overview}
For the discussions below, we will assume that the ambient dimension $d\geq 2$. Our construction
here is less straightforward than for $d=1$ in \Cref{sec:1d lower construction} but shares the
same basic idea. Recall that the basic premise is that we want to saturate the H\"older's
inequality in~\cref{eq:holder}, which requires the density difference $f=\mu-\nu$ to have Fourier
transform be (almost) supported on a sphere. For $d=1$ we took $f$ to be a pure sinusoid. However, of course
such $f$ is not compactly supported and that is why we multiplied the sinusoid by a rectangle (and
then convolved many times to gain smoothness), which served as a mollifier.

For $d>1$ let us attempt to follow the same strategy and take 
$$ f_r(x) = g_r(x) h(x)\,,$$
where $r>0$ is a parameter, $h$ is some compactly supported smooth mollifier and $g_r(x)$ is defined implicitly via
\begin{equation*}
    \widehat g_r(\omega) = r^{(1-d)/2} \delta(\|\w\|-r),  
\end{equation*}
where here and below we denote, a bit
informally, by $\delta(\|\cdot\| - r)$ a distribution that  
integrates any smooth compactly supported function $\phi$ as follows:
$$ \int_{\R^d} \phi(\omega) \delta(\|\omega\| - r) \d\omega \eqdef
r^{d-1} \int_{\R^d} \phi(r\omega) \d\sigma(\omega) =
\frac{2\pi^{d/2}r^{d-1}}{\Gamma(\frac d2)} \EE[\phi(rX)]\,,$$
where $\sigma$ is the unnormalized surface measure of $\mathbb{S}^{d-1}$ and $X$ is a random
vector uniformly distributed on $\mathbb{S}^{d-1}$. Explicit computation shows
    \begin{align*}
        g_r(x) &= \cal F^{-1}[\widehat g_r](x) = \frac{\sqrt r}{(2\pi)^dr^{d/2}} \int_{\R^d} e^{i\langle\omega,x\rangle} \delta(\|\w\|-r)\D\w \\
        &= \frac{\sqrt r}{(2\pi)^{d/2}} \, \|x\|^{1-d/2} \, J_{d/2-1}(\|rx\|), 
    \end{align*}
    where $J_\nu$ denotes Bessel functions of the first kind of order $\nu$. Notice that
    $g$ is spherically symmetric and real-valued (some further properties of it are collected
    below in \Cref{lem: Bessel}).

Note that $|g_r(x)| = O(1)$ as $r\to\infty$ for any fixed $x\neq0$ (\Cref{lem: Bessel}), while at the origin we have $|g_r(0)| = \Omega(r^{(d-1)/2})$, which follows from the series expansion of the Bessel function given in for example \cite[Section 3.1-3.11]{watson1995treatise}. This causes an issue for $d>1$, as $g_r$ is too large at the origin as $r \to \infty$ compared to its tails, which makes it difficult to use it as the difference between two probability densities. Hence, we choose our mollifier $h$
to be supported on an annulus instead of on a ball. In addition, 
it will also be convenient for it to have a super-polynomially decaying Fourier transform, i.e.
$$ |\widehat{h}(w)| \leq H(\|w\|) \triangleq C \exp \left( -\frac{c \|w\|}{\log(\|w\|+2)^2}
\right) \qquad \forall w \in \R^d\,. $$
The existence of the desired function $h$ is proven \Cref{lem:final_h}.

Note that all of the Fourier energy of $g_r$ lies at frequencies $\|\omega\|=r$ by construction.
However, after multiplying by $h$ the energy spills over to adjacent frequencies as well and
we need to estimate the amount of the spill. Due to the fast decay of $\widehat h$ we will
show, roughly, the following estimates on the behavior of $\widehat f_r$:
\begin{align*}
    |\widehat f_r(\omega)| &\lesssim \tilde O(r^{(1-d)/2}) 1\{\|\omega - r\| \le \log^2(r)\} +
	r^{(d-1)/2}H(\max(\|\omega\|-r, \log^2 r)), \quad\text{and} \\
 |\widehat f_r(\omega)| &\lesssim    r^{(d-1)/2} \|\omega\|
\end{align*}
as $r\to \infty$. Note that the first bound above is super-polynomially decaying in both $\|\omega\|$
and $r$, which allows us to show that 
$$ \|f_r\|_{t,2} \le \tilde O(r^{t})\,$$for $t>-\frac{d+2}{2}$, recalling the notation $\|f\|_{t,2} = \|\|\cdot\|^t\widehat f\|_2$. 
A direct calculation will also show
$$ \|f_r\|_1 \asymp \|f_r\|_\infty \asymp \|f_r\|_2 \asymp 1\,.$$
For a desired total-variation separation $\epsilon$, we will set $\mu - \nu = \epsilon f_r$ and choose $r=\epsilon^{-1/\beta}$ to ensure that $\epsilon \|f_r\|_{\beta,2} =\tilde{O}(1)$. For the energy distance between $\mu$ and $\nu$ these choices yield
$$ \mathcal{E}_\gamma(\mu,\nu) \asymp \|\epsilon f_{\epsilon^{-1/\beta}}\|_{-{d+\gamma\over 2}, 2} = \tilde
O(\epsilon^{1 + \frac{d+\gamma}{2\beta}}) = \tilde O(\TV^{\frac{d+2\beta+\gamma}{2\beta}})\,,$$
as required.

We now proceed to rigorous details.

\subsubsection{The construction}\label{ssec:lower construction}

First, we must construct the mollifier $h$ with the properties outlined in \Cref{ssec:lower bd overview}. Recall that a function $f$ is radial (also known as spherically symmetric) if its value at $x\in\R^d$ depends only on $\|x\|$. In other words, $f(x)=f(y)$ holds for all $x,y\in\R^d$ with $\|x\|=\|y\|$.

\begin{lemma}\label{lem:final_h}
    There exists a compactly supported radial Schwartz function $h$, and a positive sequence $\{r_n\}_{n=1}^\infty$  satisfying $r_n = \Theta(n)$, such that
    \begin{align}
        \supp(h) &\subset \mbb{B}(0,1),  \label{eq:h_req_1}\\
    \supp(h) &\subset \R^d \setminus \mbb{B}(0,r_0),  \label{eq:h_req_2}\\
        |\widehat{h}(w)| &\leq C \exp \left( -\frac{c \|w\|}{\log(\|w\|+2)^2} \right) \quad \text{for all } w \in \R^d, \text{ and}\label{eq:h_req_3}\\
        \widehat{h}(r_nu)&=0 \quad \text{for all } u\in\bb S^{d-1}, \label{eq:h_req_4}
    \end{align}
    for universal constants $C, c, r_0 > 0$.
\end{lemma}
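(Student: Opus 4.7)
The plan is to construct $h$ as the convolution of the function $h_0$ from \Cref{lem:first_h} (after shrinking its support) with the uniform surface measure on a sphere. This neatly decouples the two main requirements: the almost-exponential Fourier decay is inherited from $\hat h_0$, while the sphere-shaped zero set of $\hat h$ is contributed by the Fourier transform of the surface measure, which is essentially a Bessel function.

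\textbf{Construction.} First rescale $h_0$ by defining $\tilde h_0(x) \eqdef h_0(4x)$, so that $\supp(\tilde h_0) \subseteq \bb B(0,1/4)$ and $\hat{\tilde h_0}(w) = 4^{-d}\hat h_0(w/4)$ still satisfies an almost-exponential decay bound of the form \eqref{eq:Cohen_15} (with suitably adjusted constants $C,c$). Let $\mu$ denote the uniform surface measure on $\partial \bb B(0,1/2)$ and define
\begin{equation*}
    h \eqdef \tilde h_0 * \mu.
\end{equation*}
Since both $\tilde h_0$ and $\mu$ are spherically symmetric, so is $h$. Moreover $\supp(h) \subseteq \supp(\tilde h_0) + \supp(\mu) \subseteq \{x : 1/4 \leq \|x\| \leq 3/4\}$, which gives \eqref{eq:h_req_1} and \eqref{eq:h_req_2} simultaneously with $r_0 \eqdef 1/4$.

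\textbf{Fourier analysis.} The transform factorizes as $\hat h(w) = \hat{\tilde h_0}(w) \cdot \hat\mu(w)$, and \Cref{lem: Bessel}(1) together with a simple rescaling gives $\hat\mu(w) = c_d \|w\|^{1-d/2}J_{d/2-1}(\|w\|/2)$ for a dimensional constant $c_d$. Using parts (2) and (4) of \Cref{lem: Bessel}, this factor is uniformly bounded near the origin and of order $\|w\|^{-(d-1)/2}$ at infinity, so multiplying against the almost-exponential decay of $\hat{\tilde h_0}$ still yields a bound of the form $C'\exp(-c'\|w\|/\log(\|w\|+2)^2)$ after a harmless shrinkage of the constant, establishing \eqref{eq:h_req_3}. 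Setting $r_n \eqdef 2 j_{d/2-1,n}$, where $j_{\nu,n}$ denotes the $n$-th positive zero of $J_\nu$, immediately yields $\hat h \equiv 0$ on $\partial \bb B(0,r_n)$, i.e. \eqref{eq:h_req_4}. Classical Bessel zero asymptotics give $j_{d/2-1,n} = (n + (d-3)/4)\pi + \cal O(1/n)$, hence $r_n \to \infty$ and $r_{n+1}-r_n = 2\pi + \cal O(1/n)$, so $\sup_k |r_{k+1}-r_k| = \cal O(1)$. Finally, $\hat h \in L^1 \cap L^2$ by the decay bound, so $h \in L^2$ by Plancherel; and since $\hat{\tilde h_0}\in L^1$, the function $\tilde h_0$ is in $C_0 \subseteq L^\infty$, so $h = \tilde h_0 * \mu$ lies in $L^\infty$ as the convolution of a bounded function with a finite measure.

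\textbf{Main difficulty.} All of the substantial analytic content is already absorbed into \Cref{lem:first_h}, whose proof relies on the nontrivial construction of \citep{cohen2023fractal}, together with the standard Bessel zero asymptotics. The only routine bookkeeping is verifying that the polynomial prefactor $\|w\|^{-(d-1)/2}$ coming from $\hat\mu$ can be absorbed into the almost-exponential decay of $\hat{\tilde h_0}$, which follows from the elementary estimate $|w|^k \exp(-c\|w\|/\log(\|w\|+2)^2) \lesssim \exp(-c'\|w\|/\log(\|w\|+2)^2)$ valid for every $k\in\R$ and every $0 < c' < c$.
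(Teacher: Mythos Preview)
Your proof is correct and takes a genuinely different, somewhat cleaner route than the paper. The paper defines $h = A_0 * h_0(8\cdot) * \rho_0$ as a \emph{triple} convolution: a smooth annulus bump $A_0$ supported on $\{3/8 \le \|x\| \le 5/8\}$ is used solely to push the support away from the origin, while a separate ball indicator $\rho_0 = \one_{\bb B(0,1/8)}$ is used solely to introduce Bessel zeros in Fourier space (via $\widehat\rho_0 \propto J_{d/2}(\|w\|/8)/\|w\|^{d/2}$). You achieve both effects with a single convolution against the surface measure $\mu$ on $\partial\bb B(0,1/2)$: its support being a sphere of positive radius forces $\supp(h)$ into an annulus, and its Fourier transform $c_d\|w\|^{1-d/2}J_{d/2-1}(\|w\|/2)$ simultaneously supplies the required sphere-shaped zero set. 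The price is that your Fourier factor $\widehat\mu$ is only polynomially bounded rather than Schwartz (as $\widehat A_0$ and $\widehat\rho_0$ are), but you correctly observe that any polynomial prefactor is absorbed by the almost-exponential decay of $\widehat{\tilde h_0}$ after shrinking $c$. Either construction suffices for the downstream use in \Cref{lem:exp_upper}; yours is marginally more economical.
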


\begin{proof}First, let $h_0$ be as constructed in \Cref{lem:first_h}, which already satisfies \cref{eq:h_req_1} and \Cref{eq:h_req_3}. To address the other two requirements, we modify $h_0$ by convolving it with two additional terms:
    \[ h(x) := (A_0(\cdot) * h_0(8\cdot) * \rho_0(\cdot))(x), \]
    where $A_0$ and $\rho_0$ aim to address \Cref{eq:h_req_2} and \Cref{eq:h_req_4}, respectively, and are defined as
    \[ 
    A_0(x) = \exp\left(-\frac{1}{{1/64 - (\|x\| - 1/2)^2}}\right)\one\big\{\|x\|\in(3/8,5/8)\big\}, \quad \rho_0(x) = \one\{ \|x\| < 1/8 \}.
    \]
    Before proceeding, note that clearly $h$ is a radial Schwartz function. Let us now verify that $h$ indeed satisfies the four requirements. 
    Note that $A_0$ is an ``annulus'' supported on $\mbb{B}(0,5/8)\backslash\mbb{B}(0,3/8)$, and both $h_0(8\cdot)$ and $\rho_0$ are supported on $\mbb{B}(0,1/8)$. Therefore, $\supp(h) \subset \mbb{B}(0,7/8)\backslash\mbb{B}(0,1/8)$, which implies \Cref{eq:h_req_1,eq:h_req_2}. We now turn to the other two conditions in Fourier space. Note that 
    $$
        \widehat{h}(w) =(1/8)^d\cdot \widehat{A}_0(w) \cdot  \widehat{h}_0(w/8) \cdot \widehat{\rho}_0(w).
    $$ 
    From \Cref{item:Bessel ball} of \Cref{lem: Bessel} we know that 
    $$\mc{F} [\one\{\|\cdot\| < 1\}](w) = \left(\frac{2\pi}{\|w\|}\right)^{\frac{d}{2}} J_{\frac{d}{2}}(\|w\|).
    $$
    Hence, by \Cref{item:Bessel asymp} of \Cref{lem: Bessel}, the function $\widehat{\rho}_0(w)=(1/8)^d  \mc{F} [\one\{\|\cdot\| <
    1\}](w/8)$ has infinitely many zeros near the values of $\|w\|=8(2n\pi+\frac{(d+1)\pi}{4})$ for sufficiently large $n\in\Z^+$, which implies \Cref{eq:h_req_4}. 
    
    Finally, for \Cref{eq:h_req_3}, note that since both $A_0$ and $\rho_0$ are Schwartz functions, so are their Fourier transforms $\widehat{A}_0$ and $\widehat\rho_0$ so that
    $$
        |\widehat{h}(w)| \leq (1/8)^d \|\widehat{A}_0\|_\infty \|\widehat{\rho}_0\|_\infty |\widehat{h}_0(w/8)| \lesssim |\widehat{h}_0(w/8)|,
    $$concluding the proof.
\end{proof}

Let $h$ be as constructed in \Cref{lem:final_h}, and define 
\begin{equation}\label{eqn:f_r definition}
f_r = g_r h
\end{equation}
for $r>0$ and $\widehat g(\w) \eqdef r^{(1-d)/2} \delta(\|\w\|-r)$. Recall from the overview of our construction that we gave in \Cref{ssec:lower bd overview} that $f_r$ is our proposed density difference which we claim (approximately) saturates H\"older's inequality in \eqref{eq:holder}. The next Lemma records the properties of $f_r$ which will enable us to complete our proof. 

\begin{lemma}\label{lem:f_r norm estimate}
    Let $f_r$ be as in \eqref{eqn:f_r definition} and let $\{r_n\}_{n=1}^\infty$ be the sequence constructed in \Cref{lem:final_h}. The following hold. 
     \begin{enumerate}
        \item[(i)] For all $n\in\N$ we have
        \begin{equation*}
            \int_{\R^d} f_{r_n}(x)\d x=0 \qquad\text{and}\qquad \supp(f_{r_n})\subset \mbb{B}(0,1). 
        \end{equation*}
        \item[(ii)] We have 
        \begin{equation*}
            \|f_{r_n}\|_\infty \asymp \|f_{r_n}\|_2 \asymp \|f_{r_n}\|_1 \asymp 1, 
        \end{equation*}
        hiding constants independent of $n$.
        \item[(iii)] For any $t > -\frac{d+2}{2}$ we have 
        \begin{equation*}
            \|f_{r_n}\|_{t,2} = O(r_n^{t}\log^d(r_n))
        \end{equation*}
        as $n\to\infty$, hiding constants independent of $n$. 
        \item[(iv)] Recall the definition of $\psi_\gamma$ from \eqref{eqn:psi_gamma def}. For any $\gamma\in(0,2)$ we have
        \begin{equation*}
            \sup_{v \in \bb S^{d-1}, b\in\R} \left|\int_{\R^d} \psi_\gamma(\langle x,v\rangle-b)f_{r_n}(x)\D x\right| = O( r_n^{-(d+\gamma)/2} \log(r_n)^{d})
        \end{equation*} 
        hiding constants independent of $n$. 
    \end{enumerate} 
\end{lemma}

\begin{proof}
    Let us drop the dependence of $r_n$ to simplify notation. 

     \noindent\textbf{Showing (i).}\quad 
    Note that $\int_{\R^d} f_r(x)\d x = \widehat{f_r}(0)$. Then, $\widehat{f_r}(0)=0$ follows from the construction of $h$ and $g_r$. Indeed, $\widehat{g}_r$ is supported on $r\bb S^{d-1}$ while $\widehat h|_{r\bb S^{d-1}}\equiv0$. The fact that $\supp(f_r)\subset\mbb{B}(0,1)$ follows from $\supp(h)\subset\mbb{B}(0,1)$.
    \\
    
    \noindent\textbf{Showing (ii).}\quad
    Since $f_r$ has compact support, we immediately have $$\|f_r\|_1\lesssim \|f_r\|_2 \lesssim \|f_r\|_\infty.$$As $h$ is continuous and supported on the annulus $\{x:r_0\leq\|x\|\leq1\}$ by construction, it suffices to bound $g_r$ on said annulus. Now, for any $x$ with $r_0 \leq \|x\|\leq 1$, we have by \Cref{lem: Bessel} that
    \begin{align*}
        g_r(x)\lesssim \sqrt r\|x\|^{1-d/2}\frac{1}{\sqrt{r\|x\|}}\lesssim 1, 
    \end{align*}
    which shows that $\|f_r\|_\infty \lesssim 1$. 

    We now turn to lower bounding $\|f_r\|_1$. Recall that $h$ is uniformly continuous and nontrivial, hence $\int|h(u^*v)|\d\sigma(v)\neq 0$ for some radius $u^*$, and thus for all $u\in (u_0, u_1) \subseteq (0,1)$ for some constants $u_0, u_1$. Using that $g_r$ is spherically symmetric, we compute
    \begin{align*}
        \|f\|_1 &= \int_{\R^d} |g_r(x)| |h(x)|\d x \\
        &= \int_0^\infty u^{d-1} g_r(u,0,\dots,0) \int h(uv)\d\sigma(v)\d u\\
        &\gtrsim \sqrt r \int_{u_0}^{u_1}\left|J_{d/2-1}(ru)\right|\d u \gtrsim 1, 
    \end{align*}
    where the last line follows by \eqref{eqn:bessel at infty} once again.\\
    
    \noindent\textbf{Showing (iii).} \quad 
    Let $0 < s < r$, whose precise value will be set later. For convenience, set $B_s = \{x\in\R^d:\|x\|\leq s\}$ and $B_s^c=\R^d\setminus B_s$. Recall that by definition
    \begin{align}
        \widehat f_r(\w) &= r^{(1-d)/2} \int_{\R^d} \widehat h(\omega+x)\delta(\|x\|-r)\d x \nonumber\\
        &= \underbrace{r^{(1-d)/2} \int_{\R^d} (\widehat h\one_{B_s})(\omega+x)\delta(\|x\|-r)\d x}_{I} + \underbrace{r^{(1-d)/2} \int_{\R^d}(\widehat h\one_{B_s^c})(\omega+x)\delta(\|x\|-r)\d x}_{II}. \label{eqn:hat f_r decomp}
    \end{align}
    Let $C,c$ be as in \Cref{lem:final_h}, and $H(x) = C\exp(-c\|x\|/\log^2(\|x\|+2))$. Note that $\|\widehat h\|_\infty \leq C$. Therefore, the first term in the decomposition \eqref{eqn:hat f_r decomp} can be bounded by
    \begin{align*}
        |I| &\leq C r^{(1-d)/2} \int_{\R^d} \one\{\|\omega+x\|\leq s\}\delta(\|x\|-r)\d x\\
        &= C r^{(1-d)/2} \one\{\|\omega\|\in[r-s,r+s]\} \int_{\R^d} \one\{\|\omega+x\|\leq s\}\delta(\|x\|-r)\d x \\
        &\lesssim r^{(1-d)/2} s^{d-1} \one\{\|\omega\|\in[r-s,r+s]\}. 
    \end{align*}
    The second line uses that if $\|\omega\|\not\in[r-s,r+s]$ then the integral becomes zero. The third line uses the fact that the surface area of the intersection of $B_s$ with any sphere of any radius (and the one centered at $\omega$ with radius $r$ in particular) is at most $O(s^{d-1})$. 

    Moving on to the second term, we have
    \begin{align*}
        |II| &= r^{(d-1)/2} \int |(\widehat h \one_{B_s^c})(\omega+ru)|\d\sigma(u) \lesssim r^{(d-1)/2} H(\max\{\|\omega\|-r, s\}) 
    \end{align*}
    using that $H : [0,\infty) \to (0,C]$ is decreasing and that $|\widehat h(y)\one_{B_s^c}(y)\| \leq H(\max\{y, s\})$ for all $y\in\R^d$. Summarizing, we have the pointwise estimate
    \begin{equation}\label{eqn:f_r pointwise est}
        |\widehat f_r(\omega)| \lesssim r^{(1-d)/2}s^{d-1}\one\{\|\omega\|\in[r-s,r+s]\} + r^{(d-1)/2} H(\max\{\|\omega\|-r,s\})
    \end{equation}
    for all $\omega\in\R^d$ and $0 < s < r$. 

    We now show that $f_r$ is Lipschitz continuous. Recall from the construction of $h$ (\Cref{lem:final_h}) that $h|_{r_n\bb S^{d-1}}\equiv0$. Then, we observe that for any $\omega\in\R^d$
    \begin{align}
        |\widehat f_r(\omega)| &= r^{(d-1)/2}\left|\int \widehat h(\omega+ru)\d\sigma(u)\right|\nonumber \\
        &= r^{(d-1)/2}\left|\int \{\widehat h(\omega+ru) - \widehat h(ru)\}\d\sigma(u)\right| \nonumber\\
        &= r^{(d-1)/2} \|\widehat h\|_{\operatorname{Lip}} \frac{2\pi^{d/2}\|\omega\|}{\Gamma(\frac d2)} \nonumber \\&\lesssim r^{(d-1)/2}\|\omega\|, \label{eqn:f_r Lip}
    \end{align}
    where we use that $\widehat h$ is Schwartz by construction, and thus has finite Lipschitz constant $\|\widehat h\|_{\operatorname{Lip}}$.

    With \eqref{eqn:f_r pointwise est} and \eqref{eqn:f_r Lip} in hand we can proceed to bounding the norm of $f_r$. Let $s=D\log(r)^2$ for a large constant $D$ independent of $r$, and assume that $r$ is large enough so that $s < r/2$. Also set $\theta > 0$, whose precise value is specified later. We have 
    \begin{align*}
        \|f_r\|_{t,2}^2 &\,\,\,\,= \int_{\R^d} \|\omega\|^{2t} |\widehat f_r(\omega)|^2 \d\omega \\
        &\stackrel{\eqref{eqn:f_r Lip}}{\lesssim} r^{d-1} \int_{\|\omega\|\leq r^{-\theta}} \|\omega\|^{2t+2} \d\omega + \int_{\|\omega\| > r^{-\theta}} \|\omega\|^{2t}|\widehat f_r(\omega)|^2\d\omega \\
        &\stackrel{\eqref{eqn:f_r pointwise est}}{\lesssim} r^{d-1-\theta(2t+d)} \\
        &\qquad\qquad + r^{1-d}\log(r)^{2(d-1)} \int_{\|\omega\| > r^{-\theta}} \|\omega\|^{2t} \one\{\|\omega\|\in[r-s,r+s]\}\d\omega \\&\qquad\qquad + r^{d-1}\int_{\|\omega\| > r^{-\theta}} \|\omega\|^{2t}H^2(\max\{\|\omega\|-r,s\})\d\omega \\
        &\,\,\,\,\lesssim r^{d-1-\theta(2t+d)} + r^{2t} \log(r)^{2d-1} + r^{d-1}\int_{r^{-\theta}}^\infty u^{2t+d-1}H^2(\max\{u-r,s\})\d u.
    \end{align*}
    Note that in the derivation above we changed to polar coordinates freely, and that in the second inequality we used the assumption $t > -d/2-1$. Setting $\theta$ to any positive value greater than $(d-1-2t)/(2t+d)$ ensures that the first term in the final line is $O(r^{2t})$. As for the integral term, we can bound it by
    \begin{align*}
        &\lesssim r^{d-1} H^2(s)\int_{r^{-\theta}}^{2r} u^{2t+d-1} \d u + r^{d-1} \int_{2r}^\infty H^2(u/2)\d u \stackrel{\text{\Cref{lem: tail_int}}}{\lesssim} \operatorname{poly}(r) \times H^2(s) + r^{2t}.
    \end{align*}
    By taking $D$ large enough (independently of $r$) in the definition of $s=D\log^2(r)$ we can make also the first term $\operatorname{poly}(r)\times H^2(s)$ less than $O(r^{2t})$, which concludes the proof of $(iii)$. 
\\

    \noindent\textbf{Showing (iv).}\quad 
    The bounds that we develop below are analogous to those given in the proof of $(iii)$. Fix $b\in\R$ and $v\in\mbb{S}^{d-1}$ and define
    \begin{align*}
        \dagger:=\int_{\R^d} \psi_\gamma (\braket{v,x} -b) f_r(x)\D{x}.
    \end{align*}
    Suppose first that $\gamma \neq 1$. Then, using \Cref{lem:psi_a fourier transform stronger,lem:phi_a dct bounds}, we know by dominated convergence that 
    \begin{align*}
        \dagger
        &=\int_{\R^d} \lim_{\epsilon \to0} \int_\epsilon^{1/\epsilon} C_{\psi_\gamma} \frac{\cos(t (\braket{v,x}-b))- \one\{\gamma > 1\}}{t^{(1+\gamma)/2}} f_r(x) \D t \D{x}
        \\
        &= C_{\psi_\gamma} \lim_{\epsilon\to0} \int_\epsilon^{1/\epsilon} \Re\left\{\int_{\R^d} \frac{e^{it(\braket{v,x}-b)} f_r(x)}{t^{(1+\gamma)/2}} \D x\right\}\D t \\
        &= C_{\psi_\gamma} \lim_{\epsilon\to0} \int_\epsilon^{1/\epsilon} \frac{\cos(tb)\widehat f_r(tv)}{t^{(1+\gamma)/2}} \D t.
    \end{align*}
    Similarly, for $\gamma=1$ we can compute
    \begin{align*}
        \dagger &= C_{\psi_\gamma} \lim_{\epsilon \to 0} \int_\epsilon^{1/\epsilon} \frac{\sin(-tb)\widehat f_r(tv)}{t} \D t.
    \end{align*}
    In either case, we have $|\dagger| \lesssim \int_0^\infty |\widehat f_r(tv)|/t^{(1+\gamma)/2}\d t$. 
    
    Let $s=D\log^2(r)$ for large $D$ independent of $r$ as in the proof of $(iii)$, and let $\theta > 0$ whose precise value is specified later. Assuming that $r$ is large enough so that $s < r/2$, for any $\gamma\in(0,2)$ we have
    \begin{align}
        |\dagger| &\,\,\,\,\leq \int_0^{r^{-\theta}} \frac{|\widehat f_r(tv)|}{t^{(1+\gamma)/2}}\d t + \int_{r^{-\theta}}^\infty \frac{|\widehat f_r(tv)|}{t^{(1+\gamma)/2}}\d t \nonumber\\
        &\stackrel{\text{\eqref{eqn:f_r Lip}}}{\lesssim} r^{(d-1)/2}\int_0^{r^{-\theta}} t^{(1-\gamma)/2}\d t + \int_{r^{-\theta}}^\infty \frac{|\widehat f_r(tv)|}{t^{(1+\gamma)/2}}\d t \nonumber\\
        &\stackrel{\text{\eqref{eqn:f_r pointwise est}}}{\lesssim} r^{\frac{d-1}{2}-\theta\frac{3-\gamma}{2}} \nonumber\\&\qquad +\int_{r^{-\theta}}^\infty \frac{1}{t^{(1+\gamma)/2}}\left(r^{(1-d)/2}s^{d-1}\one\{t\in[r-s,r+s]\} + r^{(d-1)/2}H(\max\{t-r,s\})\right)\d t \nonumber\\
        &\,\,\,\,\lesssim r^{\frac{d-1}{2}-\theta\frac{3-\gamma}{2}}+r^{-(d+\gamma)/2}\log^d(r) + H(s)r^{(d-1)/2}\int_{r^{-\theta}}^{2r}\frac{\d t}{t^{(1+\gamma)/2}} + \int_{2r}^\infty \frac{H(t/2)}{t^{(1+\gamma)/2}}\d t \nonumber\\
        &\mkern-14mu\stackrel{\text{ \Cref{lem: tail_int}}}{\lesssim} r^{\frac{d-1}{2}-\theta\frac{3-\gamma}{2}}+r^{-(d+\gamma)/2}\log^d(r) + H(s)\times\operatorname{poly}(r) + r^{-100d} \label{eqn:F(v,b) bound gamma=/=1}, 
    \end{align}
    Set $\theta$ to any value greater than $(2d+\gamma-1)/(3-\gamma)$, which ensures that the first term in \eqref{eqn:F(v,b) bound gamma=/=1} is $O(r^{-(d+\gamma)/2})$. By taking $D$ large enough in the definition of $s=D\log^2(r)$, we can make $H(s)$ smaller than any polynomial in $r$, which ensures that the third term in \eqref{eqn:F(v,b) bound gamma=/=1} is also  $O(r^{-(d+\gamma)/2})$. We thus obtain the final bound $|\dagger| \lesssim r^{-(d+\gamma)/2}\log^d(r)$, concluding our proof. 
\end{proof}

\begin{proof}[Proof of Theorem \ref{thm:tightness} and Proposition \ref{prop:d_H tightness} for $d>1$] Using the functions $\{f_{r_n}\}_{n=1}^\infty$ we constructed in \Cref{lem:f_r norm estimate}, we are ready to prove \Cref{thm:tightness,prop:d_H tightness} for $d>1$. 

Let $p_0$ be a compactly supported probability density with $\inf_{\|x\|\leq1} p_0(x) > 0$. Fix the smoothness $\beta>0$. Given any desired total variation separation $\epsilon\in(0,1)$, we can find $n_0\in\N$ such that $\epsilon^{-1/\beta} \asymp r_{n_0}$, where we hide an $\epsilon$-independent multiplicative constant. Define
\begin{equation*}
    p_\epsilon = p_0 + \epsilon f_{r_{n_0}}/2 \qquad\text{and}\qquad q_\epsilon = p_0 - \epsilon f_{r_{n_0}}/2. 
\end{equation*}
Clearly $p_\epsilon$ and $q_\epsilon$ are compactly supported probability densities for all small enough $\epsilon$. Moreover, by \Cref{lem:f_r norm estimate} they satisfy
\begin{align*}
    \|p_\epsilon-q_\epsilon\|_1 \asymp \epsilon &\qquad\text{and}\qquad\|p_\epsilon\|_{\beta,2}\asymp\|q_\epsilon\|_{\beta,2}\asymp1 \qquad\text{and}\\
    \cal E_\gamma(p_\epsilon, q_\epsilon) \lesssim \epsilon^{\frac{2\beta+d+\gamma}{2\beta}} \log(1/\epsilon)^d &\qquad\text{and}\qquad \overline{d_H}(p_\epsilon, q_\epsilon)\lesssim \epsilon^{\frac{2\beta+d+1}{2\beta}}\log(1/\epsilon)^d
\end{align*}
for all fixed $\gamma\in(0,2)$. This concludes our proof. 
\end{proof}

\section{Proof of Proposition \ref{prop: tst bad}}\label{sec: proof tst bad}
\begin{proof}
    Note that $\overline{d_H}=T_{d,0}$. Let $p_\epsilon, q_\epsilon$ be the compactly supported densities constructed in the proof of \Cref{thm:tightness} in the general dimensional case. Then by construction 
    \begin{equation*}
        \eps\asymp \TV(p_\epsilon, q_\epsilon)\asymp \|p_\epsilon-q_\epsilon\|_2\quad\text{and}\quad \|p_\epsilon\|_{\beta,2} + \|q_\epsilon\|_{\beta,2}\lesssim 1\quad\text{and}\quad \overline{d_H}(p_\epsilon, q_\epsilon)\lesssim \epsilon^{\frac{2\beta+d+1}{\beta}} \log(1/\epsilon)^{d}. 
    \end{equation*}
    Write $p_{\epsilon, n}$ and $q_{\epsilon, n}$ for the empirical measures of $p_\epsilon$ and $q_\epsilon$ respectively, based on $n$ i.i.d. observations each. By the triangle inequality we have
    \begin{align*}
        \E\overline{d_H}(p_{\epsilon, n}, q_{\epsilon, n})&\leq \E\overline{d_H}(p_{\epsilon,n},p_\epsilon) + \overline{d_H}(p_\epsilon,q_\epsilon) + \E\overline{d_H}(q_\epsilon,q_{\epsilon, n}) \\ &\mkern-20mu\stackrel{\text{ \Cref{lem:VC bound}}}{\lesssim} 1/\sqrt{n}+\epsilon^{\frac{2\beta+d+1}{2\beta}}\log(1/\epsilon)^d. 
    \end{align*}
    This completes the proof. 
\end{proof}

\end{document}